\documentclass{mcom-l}
\copyrightinfo{}{}
\makeatletter
\AtBeginDocument{\let\@serieslogo\@empty}
\makeatother

\usepackage{amssymb}
\usepackage{mathtools}
\usepackage{cases}
\usepackage{mathrsfs}
\usepackage{graphicx}
\usepackage{enumerate}
\usepackage{marginnote}
\usepackage[dvipsnames]{xcolor}
\usepackage{xstring}
\usepackage[hidelinks]{hyperref}
\allowdisplaybreaks[3]

\newtheorem{thm}{Theorem}[section]
\newtheorem{corollary}[thm]{Corollary}
\newtheorem{lem}[thm]{Lemma}
\newtheorem{prop}[thm]{Proposition}
\theoremstyle{definition}
\newtheorem{example}[thm]{Example}
\theoremstyle{remark}
\newtheorem{remarks}[thm]{Remark}

\numberwithin{equation}{section}

\newcommand{\blue}[1]{#1}
\newenvironment{bluechange}{}{}
\newcommand{\schemechange}[1]{#1}
\newenvironment{schemechanges}{}{}
\providecommand{\doi}[1]{}
\renewcommand{\doi}[1]{%
  \IfBeginWith{#1}{http}%
    {\href{#1}{\textcolor{black}{\nolinkurl{#1}}}}%
    {\href{https://doi.org/#1}{\textcolor{black}{\nolinkurl{#1}}}}}

\date{}

\newcommand{\nn}{\mathbb{N}}
\newcommand{\rr}{\mathbb{R}}

\def\LL{\mathcal L}

\def\<{\langle}
\def\>{\rangle}

\def\d"{^{\prime\prime}}

\begin{document}

\title[Tamed Euler approximation of invariant measures]{Tamed Euler Approximation of Invariant Measures for Weakly Interacting Particle Systems and McKean--Vlasov Equations}

\author[L. Cheng]{Lingyan Cheng}
\address{School of Mathematics and Statistics, Nanjing University of Science and Technology, Nanjing, Jiangsu 210094, PR China}
\email{cly@njust.edu.cn}

\author[R. Li]{Ruinan Li}
\address{School of Statistics and Data Science, Shanghai University of International Business and Economics, Shanghai 201620, PR China}
\email{ruinanli@amss.ac.cn}

\author[S. Liu]{Shiyu Liu}
\address{School of Mathematics and Statistics, Wuhan University, Wuhan, Hubei 430072, PR China}
\email{syliu\_math@whu.edu.cn}

\author[W. Liu]{Wei Liu}
\address{School of Mathematics and Statistics, Wuhan University, Wuhan, Hubei 430072, PR China}
\email{wliu.math@whu.edu.cn}
\thanks{Ruinan Li is the corresponding author.}

\subjclass[2020]{Primary 65C30; Secondary 60H35, 60K35}
\keywords{McKean--Vlasov equations, interacting particle systems, invariant measures, tamed Euler schemes, Wasserstein distance, reflection coupling}

\begin{abstract}

We analyze explicit tamed Euler approximations for weakly interacting
particle systems and McKean--Vlasov diffusions with additive noise and
polynomially weighted locally H\"older, possibly superlinear drift.
Under dissipativity at infinity and suitable stability assumptions, we
establish time-uniform $W_1$ bounds by reflection coupling, separating
relaxation, time-discretization, and finite-particle errors.
For small step sizes, we prove existence and uniqueness of numerical
invariant laws and bound their error relative to the continuous-time
particle invariant law uniformly in $N$ in a normalized joint $W_1$
distance. We further bound the error of the one-particle numerical
invariant marginal relative to the McKean--Vlasov invariant law by
$O(\theta^{\min\{\alpha,\beta/2\}}+N^{-1/2})$, where $\theta$ is the
step size, $N$ the particle number, $\alpha\in(0,1/2]$ the taming exponent,
and $\beta\in(0,1]$ the H\"older exponent. Analytical examples illustrate
the framework for smooth and non-Lipschitz drifts.

\end{abstract}

\maketitle
\hypersetup{
  pdftitle={Tamed Euler Approximation of Invariant Measures for Weakly Interacting Particle Systems and McKean--Vlasov Equations},
  pdfauthor={Lingyan Cheng, Ruinan Li, Shiyu Liu, Wei Liu},
  pdfsubject={Tamed Euler approximation of invariant measures for McKean--Vlasov equations}
}

\section{Introduction}
The approximation of invariant measures for stochastic differential equations (SDEs) is a classical and important topic in stochastic analysis, numerical probability, and sampling; see, for instance, \cite{BP23,BP24,DM17,FT21,PP23,Soize94}. McKean--Vlasov SDEs, also called distribution-dependent or mean-field SDEs, have attracted sustained attention because they arise naturally in stochastic control, mathematical finance, interacting particle systems, and machine learning; see, e.g., \cite{BLM17,CB18,HR21,JR15}. In this paper we study the long-time numerical approximation of invariant measures for the McKean--Vlasov SDE
\begin{equation}\label{MV}
	\begin{cases}
		dX_t=\sqrt{2}dB_t+b(X_t)dt+ G \circledast\mu_{t}(X_t)dt,\\
		X_0\sim\mu_0,
	\end{cases}
\end{equation}
where $\mu_t=\mathcal{L}(X_t)$ denotes the law of $X_t$ and
\[
G\circledast\mu_t(x):=\int_{\rr^d} G(x,y)\mu_t(dy).
\]
\blue{Here $B=(B_t)_{t\geq0}$ is a standard $d$-dimensional Brownian motion, and $X_0$, with law $\mu_0$, is independent of $B$. The precise assumptions on $b$ and $G$ are stated in Section~2.}

\begin{schemechanges}
At the level of probability laws, \eqref{MV} corresponds to a nonlinear
Fokker--Planck equation. In terms of a density $\mu_t(dx)=\rho_t(x)\,dx$,
it reads
\begin{equation}\label{eq:nonlinear-fokker-planck}
\partial_t\rho_t(x)=\Delta\rho_t(x)
-\nabla_x\cdot\left[\rho_t(x)\left(b(x)
+\int_{\rr^d}G(x,y)\rho_t(y)\,dy\right)\right],
\qquad t>0,\ x\in\rr^d.
\end{equation}
The equation is understood in the weak sense, with initial measure
$\mu_0$. Under the assumptions below, the invariant law $\pi^{MV}$
corresponds to a stationary probability solution of this equation.
\end{schemechanges}

When $G=0$, equation \eqref{MV} reduces to the Langevin diffusion
\begin{equation}\label{LD}
	\begin{cases}
		dX_t=\sqrt{2}dB_t+b(X_t)dt,\\
		X_0\sim \mu_0.
	\end{cases}
\end{equation}
For $b=-\nabla V$, standard well-posedness, normalizability, and
coercivity assumptions ensure that \eqref{LD} has an invariant density
proportional to $e^{-V}$ and converges to its invariant law $\pi$;
see \cite{P14}. Langevin dynamics therefore provides a continuous-time
model for sampling. Since exact simulation is generally unavailable,
one uses a time-discretization scheme. For globally Lipschitz drift,
the classical choice is the Euler discretization
\begin{equation*}
	\widetilde{X}_{(k+1)\theta}=\widetilde{X}_{k\theta}+ \theta b(\widetilde{X}_{k\theta})+\sqrt{2 \theta} \xi_{k+1},
\end{equation*}
where $\theta>0$ is the step size and $\left(\xi_{k}\right)_{k \geq 1}$ is an i.i.d. sequence of $d$-dimensional standard Gaussian random vectors. This iteration is commonly referred to as the Langevin Monte Carlo (LMC) algorithm, and its behavior has been studied extensively under various regularity and geometric assumptions; see, for example, \cite{BP23,BP24,DM17,FT21,PP23}. A well-known difficulty is that the explicit Euler scheme may diverge when the coefficients grow super-linearly \cite{HJK11}. This has motivated a large literature on numerical schemes for SDEs with nonglobally Lipschitz coefficients, including implicit methods \cite{HMS02,Hu96,Talay02} and explicit tamed schemes \cite{HJK12,Sabanis13}. \blue{The key idea of taming is to replace the original drift with a modified version having at most linear growth, leading to a scheme of the form}
\begin{equation}\label{eq:tamed-scheme}
	\widetilde{X}_{(k+1)\theta}=\widetilde{X}_{k\theta}+\theta b_\alpha(\widetilde{X}_{k\theta})+\sqrt{2\theta}\xi_{k+1},\,\,\widetilde{X}_0=X_0,
\end{equation}
\blue{Here $X_0$ is the initial condition in \eqref{LD}, and the tamed drift $b_\alpha$ depends on the step size $\theta$. It is chosen to have at most linear growth while converging to $b$ as $\theta\to0$; see \eqref{eq:tamed}. Compared with implicit schemes, tamed Euler methods retain the simplicity of an explicit scheme and remain computationally attractive in superlinear settings.}

\begin{schemechanges}
Under the polynomially weighted local H\"older condition with weight
exponent $l$ and H\"older exponent $\beta\in(0,1]$, we assume $l+\beta>1$
and use the taming power $\varkappa:=l+\beta-1$. This matches the growth
bound $|b(x)|=O(1+|x|^{\varkappa+1})$. Quadratic coercivity of the tamed
drift is imposed separately in {\bf (A2)}; it does not follow from the
H\"older condition alone. The definition and estimates are given in
Section~\ref{se:2}.
\end{schemechanges}

\begin{schemechanges}
Non-asymptotic Langevin approximation has been studied under different
regularity and geometric assumptions. In the strongly log-concave setting,
Dalalyan \cite{Dalalyan17} analyzes LMC in total variation using diffusion
ergodicity and a pathwise Kullback--Leibler estimate. Without convexity,
Mou, Flammarion, Wainwright and Bartlett \cite{MFWB22} obtain
finite-horizon KL discretization bounds with second-order step-size
dependence under globally Lipschitz drift, a Lipschitz Jacobian, and
additional moment and Fisher-information conditions; their sampling
corollary further assumes a log-Sobolev inequality. For superlinear
log-gradients, Lytras and Mertikopoulos \cite{LM25} develop tamed samplers
with KL, total-variation, and Wasserstein guarantees under local Lipschitz
regularity and Poincar\'e or log-Sobolev assumptions. Liu, Majka and
Monmarch\'{e} \cite{LMM23} establish $W_2$-contraction for Euler schemes
of elliptic diffusions and interacting particle systems under
contractivity at infinity, global Lipschitz regularity, sufficiently
large diffusivity, and weak interaction.
These results use different metrics and assumptions, so their rates are
not directly comparable. Here we seek direct long-time $W_1$ bounds for
a tamed scheme with polynomially weighted locally H\"older drift,
using distant dissipativity rather than functional inequalities or a
large-diffusivity condition.
\end{schemechanges}

For McKean--Vlasov SDEs the situation is more delicate, because one must also approximate the distribution-dependent interaction term. The standard implementable approximate model for \eqref{MV} is the interacting particle system
\begin{equation}\label{IPS}
	\begin{cases}
		dX_t^{i,N}=\sqrt{2}dB_t^i+b(X_t^{i,N})dt+\frac{1}{N-1}\sum\limits_{j\neq i}G(X_t^{i,N},X_t^{j,N})dt,\\
		X_0^{i,N}=X_0^i,i=1,...,N,
	\end{cases}
\end{equation}
Here $X_0^1,\ldots,X_0^N$ are i.i.d. with law $\mu_0$, and
$B^1,\ldots,B^N$ are mutually independent $d$-dimensional Brownian
motions, independent of the entire initial family. Under suitable
assumptions, the law of a fixed particle converges to the law of
\eqref{MV} as $N\to\infty$, the propagation of chaos phenomenon;
see \cite{DEGZ20,GLWZ22,HRZ24,LWZ21,M01,Meleard96,Sznitman91}.
An implementable algorithm further requires time discretization of
the particle system.

Euler schemes for McKean--Vlasov SDEs with linear growth have been studied in \cite{AK02,BH22,BT97}. For superlinear coefficients, both implicit schemes \cite{CR22,DES22} and tamed Euler schemes \cite{DES22,LSW23} have been developed. \schemechange{These studies primarily address finite-time approximation. Here we focus on quantitative approximation of the invariant measure $\pi^{MV}$ of \eqref{MV} by a tamed Euler discretization of the interacting particle system.} More precisely, for each $1\leq i\leq N$ and $t\in[k\theta,(k+1)\theta]$, $k \in \nn
$, we consider the continuous-time interpolation
\begin{equation*}
	\widetilde{X}_{t}^{i,N}=\widetilde{X}_{k\theta}^{i,N}+\int_{k\theta}^{t}b_\alpha(\widetilde{X}_{k\theta}^{i,N})ds+\frac{1}{N-1}\int_{k\theta}^{t}\sum\limits_{j\neq i}G(\widetilde{X}_{k\theta}^{i,N},\widetilde{X}_{k\theta}^{j,N})ds+\sqrt{2}\int_{k\theta}^{t}dB_s^i,
\end{equation*}
where $\widetilde{X}_0^{i,N}=X^i_0$ for any $1\leq i\leq N$. We study the convergence of $\widetilde{X}_{t}^{i,N}$ toward $\pi^{MV}$ in the $W_1$-Wasserstein distance.
The resulting particle method provides a mesh-free probabilistic
approximation of \eqref{eq:nonlinear-fokker-planck}, whose long-time
accuracy is quantified here in $W_1$.

\begin{schemechanges}
Several approaches to long-time McKean--Vlasov approximation are already
available, including self-interacting methods \cite{CD25,CL24,DJL23},
decreasing-step ergodic simulation \cite{CP24}, and unbiased multilevel
estimation \cite{ACJ24}. For explicit particle schemes,
\cite[Theorem~5.16]{CLLW24} bounds the bias of a one-particle numerical
invariant marginal under local Lipschitz state regularity and global
$W_2$-contractivity for the long-time result. The fully superlinear
framework of \cite{SNKR25} allows both drift and diffusion to depend
superlinearly on state and measure variables, and establishes strong
propagation of chaos, strong $L^p$ convergence of an explicit tamed
scheme, and $W_2$-ergodicity under its contractivity assumptions.

Bao and Hao \cite{BH25} establish uniform-in-time $W_1$ estimates under
dissipativity at infinity, with applications to several Euler-type
schemes. Their specific tamed Euler result
\cite[Theorem~1.11]{BH25} uses gradient-based taming and weighted
Lipschitz regularity. Our analysis treats a different tamed scheme under
weighted local H\"older regularity and provides numerical invariant-law
bounds uniform in $N$ in the normalized joint particle distance.
\end{schemechanges}

\begin{schemechanges}
Our coupling argument builds on the approximate componentwise reflection
coupling and adapted distance developed in \cite{Eberle16,LWZ21}. We use
this established framework for numerical perturbation analysis rather
than propose a new coupling principle. Whereas \cite{LWZ21} studies
continuous-time ergodicity and propagation of chaos, we compare the exact
dynamics with their tamed discretization, using uniform moment estimates
and quantitative bounds on the taming and time-discretization errors.
Our setting has additive noise and weak interaction of the form
$b(x)+G\circledast\mu(x)$. Weighted local H\"older regularity, distant
two-point dissipativity, and the upper bound on the dissipativity profile
are combined with the tamed coercivity condition {\bf (A2)} and the stated
interaction and step-size restrictions. These assumptions permit local
expansion, without requiring a log-Sobolev inequality or global
$W_2$-contraction; see Example~\ref{ex:holder-local-expansion}.
\end{schemechanges}

The main contributions of the paper can be summarized as follows.
\begin{schemechanges}
\begin{enumerate}
\item We establish time-uniform $W_1$ error bounds for tamed Euler
approximations of Langevin diffusions and weakly interacting particle
systems with polynomially weighted locally H\"older, possibly superlinear
drift. The analysis permits local expansion of the drift under distant
two-point dissipativity and the stated assumptions. In particular, the
coercivity condition {\bf (A2)} on the tamed drift, together with the
growth bounds and the restrictions on the interaction and step size,
yields uniform moment estimates and a Lyapunov drift inequality for the
numerical chains. For the particle system, the discretization bound is
uniform in $N$ in the normalized distance $\bar d_N=N^{-1}d_{l^1}$.
The error coefficients are explicitly displayed or recursively computable
from the model parameters and initial moments.

\item For sufficiently small step sizes, we prove existence and
uniqueness of invariant laws for the Langevin and $N$-particle tamed
Euler chains. We obtain quantitative $W_1$ bias bounds relative to the
corresponding continuous-time invariant laws, with discretization order
$O(\theta^{\min\{\alpha,\beta/2\}})$. For the particle invariant law,
this order holds uniformly in $N$ in the normalized distance $\bar d_N$.

\item Combining the numerical perturbation estimate with
continuous-time contraction and uniform-in-time propagation of chaos,
we obtain a direct approximation bound for the McKean--Vlasov invariant
law, separating relaxation, discretization, and finite-particle errors.
In particular, the one-particle marginal of the numerical invariant law
approximates $\pi^{MV}$ with error
$O(\theta^{\min\{\alpha,\beta/2\}}+(N-1)^{-1/2})$.
As consequences, we bound the bias of the expected time-averaged
empirical measure and, under the stated moment assumptions, the expected
$W_1$ error of the fixed-time empirical particle measure, retaining the
dimension-dependent sampling term in the latter.
\end{enumerate}
\end{schemechanges}

Although the McKean--Vlasov setting is our main target, the case $G=0$ remains important both conceptually and technically. We therefore present the single-particle Langevin case first. It already contains the essential coupling argument in a simpler form and clarifies the structure of the later proofs for the particle system and the limiting equation.

The remainder of the paper is organized as follows. We introduce the assumptions, notations, and main results in Section 2. In Section 3 we deal with the Langevin diffusion case. Section 4 is devoted to the interacting particle system and to the McKean--Vlasov equation. The appendix contains the moment estimates used in the main proofs.

\section{Preliminaries}\label{se:2}

\subsection{Notations and assumptions}

\blue{Let $(\Omega,\mathscr F,(\mathscr F_t)_{t\ge0},\mathbb P)$ be a filtered probability space satisfying the usual conditions. 
The initial random variables are $\mathscr F_0$-measurable, and the Brownian motions are adapted to $(\mathscr F_t)_{t\ge0}$ and independent of the initial data.} For $a=(a_1,\ldots,a_d)$ and $b=(b_1,\ldots,b_d)$ in $\mathbb R^d$, let $|a|=(\sum_{i=1}^d a_i^2)^{1/2}$ and $\langle a,b\rangle=\sum_{i=1}^d a_ib_i$. For $r\in\mathbb R$, set $r^+:=\max\{r,0\}$.
For any bounded continuous function $f$ on $\mathbb{R}^{d}$, $\|f\|_\infty=\sup_{x\in\mathbb{R}^{d}}|f(x)|$ is the sup-norm.
For any function $G: \mathbb{R}^{d} \times \mathbb{R}^{d} \rightarrow \mathbb{R}^d$,
$\nabla_{x} G$ and $\nabla_{y} G$ denote the Jacobian matrices of $G$ with respect to  the first and second variable, respectively. Define  $\left\|\nabla_{x} G\right\|_{\infty}:=\sup _{x, y \in \mathbb{R}^{d}} \sup _{z \in \mathbb{R}^{d},|z|=1}\left|\nabla_{x} G(x, y) z\right|$, and $\left\|\nabla_{y} G\right\|_{\infty}$ is defined analogously.

On the product space $(\mathbb R^d)^N$, we use the $l^1$ metric (generalized Hamming metric)
\begin{equation*}
	d_{l^1}(x,y)=\sum_{i=1}^N|x^i-y^i|,\qquad x,y\in(\mathbb R^d)^N.
\end{equation*}
For any two probability measures $\mu,\nu$ on $(\mathbb{R}^d)^N,$
we consider the $1$-Wasserstein distance associated with the metric $d_{l^1}$, namely
\begin{equation*}
	W_{1,d_{l^1}}(\mu,\nu):=\inf\limits_{P\in\Pi(\mu,\nu)}\int\int_{(\mathbb{R}^d)^N\times(\mathbb{R}^d)^N}d_{l^1}(x,y)P(dx,dy),
\end{equation*}
where $\Pi(\mu,\nu)$ denotes the set of all couplings of $\mu$ and $\nu$, that is, the set of all probability measures on $\left(\mathbb{R}^{d}\right)^{N} \times\left(\mathbb{R}^{d}\right)^{N}$ whose marginal distributions in the $x$- and $y$-variables are $\mu$ and $\nu$, respectively.
For $N=1$, we simply write $W_1$; for $N\ge2$, the notations $W_{1,d_{l^1}}$ specifies the product-space metric.

 \blue{Throughout this paper we impose the following assumptions on the drift $b$, its tamed version $b_\alpha$, and the dissipativity profile introduced below:}
\begin{enumerate}
	\item[{\bf (A1)}] $b$ satisfies a polynomially weighted local H\"older condition: there exist $L>0$, $l\geq0$, and $\beta\in(0,1]$ with $l+\beta>1$ such that, for all $x,y\in\mathbb{R}^d$,
	\begin{equation}\label{A1}	|b(x)-b(y)|\leq L(1+|x|^l+|y|^l)|x-y|^\beta.\end{equation}
	
	\item[{\bf (A2)}] With $l$ and $\beta$ as in Assumption (A1), set $\varkappa:=l+\beta-1>0$. For any step size $\theta>0$ and a fixed $\alpha\in(0,\frac{1}{2}]$, define
    \begin{equation}\label{eq:tamed}
        b_{\alpha}(x):=\frac{b(x)}{1+\theta^\alpha |x|^\varkappa}.
    \end{equation}
   Assume that there exist positive constants $c_1$, $c_2$, and $\theta^*$ such that, for all $x\in\mathbb{R}^d$ and $\theta\in(0,\theta^*)$,
	\begin{equation}\label{A11}
		\langle x,b_{\alpha}(x)\rangle\leq -c_1|x|^2+c_2.
	\end{equation} 
\end{enumerate}

\begin{remarks}
 Let $K:=|b(0)|+2L$. Under Assumption {\bf (A1)} and the definition \eqref{eq:tamed}, for every $x\in\rr^d$ and $0<\theta\le1$,
		\begin{equation}\label{prop1}
		\begin{aligned}
			&|b(x)|\le K(1+|x|^{\varkappa+1});\\
			&|b_\alpha(x)|\le 2K\theta^{-\alpha}(1+|x|);\\
			&|b(x)-b_\alpha(x)|\le 2K\theta^\alpha(1+|x|^{2\varkappa+1}).
		\end{aligned}
		\end{equation}
Indeed, taking $y=0$ in \eqref{A1} and using the zeroth-power convention when $l=0$ gives
\[
\begin{aligned}
|b(x)|&\le |b(0)|+L(1+|x|^l)|x|^\beta\\
&\le |b(0)|+L(1+2|x|^{l+\beta})
&\le K(1+|x|^{l+\beta})=K(1+|x|^{\varkappa+1}).
\end{aligned}
\]
The factor $2L$ covers the endpoint $l=0$, where the weight in \eqref{A1} equals $2$. Since $\varkappa\ge0$, splitting into $|x|\le1$ and $|x|\ge1$ yields
\[
|b_\alpha(x)|\le K\frac{1+|x|^{\varkappa+1}}{1+\theta^\alpha|x|^\varkappa}
\le 2K\theta^{-\alpha}(1+|x|).
\]
Finally,
\[
\begin{aligned}
|b(x)-b_\alpha(x)|
&=|b(x)|\frac{\theta^\alpha|x|^\varkappa}{1+\theta^\alpha|x|^\varkappa}\\
&\le K\theta^\alpha\bigl(|x|^\varkappa+|x|^{2\varkappa+1}\bigr)
\le 2K\theta^\alpha(1+|x|^{2\varkappa+1}).
\end{aligned}
\]
Here the taming error requires moment order $2\varkappa+1=2l+2\beta-1$. The weight exponent $l$ in the local H\"older estimate \eqref{A1} is unchanged. Quadratic dissipativity \eqref{A11} remains an additional assumption on the drift defined by \eqref{eq:tamed}; it does not follow from \eqref{A1} alone.
\end{remarks}
\begin{remarks}{
			Since $b_\alpha(x)\to b(x)$ pointwise as $\theta\downarrow0$, Assumption {\bf (A2)} also gives the dissipativity of the original drift:
			\[
			\langle x,b(x)\rangle\leq -c_1|x|^2+c_2,\qquad x\in\mathbb R^d.
			\]
			This observation will be used for the continuous-time moment estimates for the McKean--Vlasov equation \eqref{MV}.
		}
	\end{remarks}

\begin{remarks}\label{rem:A2-criterion}
A sufficient condition for Assumption {\bf (A2)} can be stated directly in terms of the original drift. Suppose that, for some $a>0$ and $B\ge0$,
\[
\langle x,b(x)\rangle\le -a|x|^{\varkappa+2}+B(1+|x|^2),
\qquad x\in\mathbb R^d.
\]
Then, for every  $c_1>0$, there exist $c_2,\theta^*>0$ such that \eqref{A11} holds for all $x\in\mathbb R^d$ and $0<\theta<\theta^*$. Indeed, put $s=\theta^\alpha$, choose $R\ge1$ with $R^\varkappa\ge 2(B+c_1)/a$, and require $s\le a/(2c_1)$. For $|x|\ge R$,
\[
\langle x,b(x)\rangle
\le -c_1(1+s|x|^\varkappa)|x|^2+B,
\]
so division by $1+s|x|^\varkappa$ gives \eqref{A11} with $c_2\ge B$. For $|x|\le R$, the same inequality holds with $c_2\ge B(1+R^2)+c_1R^2$. Thus $c_1$ may be chosen large enough to meet the additional condition {\bf (A2')} by reducing $\theta^*$ if necessary.
\end{remarks}

\begin{enumerate}
	\item[{\bf(A2')}]\label{A21} {When $\alpha=\frac{1}{2}$, assume additionally that $c_1>4K^2$.}
\end{enumerate}

\blue{We next introduce a {\it \bf dissipativity rate} $\beta_0(r)$ for the drift of \eqref{MV} at distance $r>0$. It is a continuous function on $(0,+\infty)$ satisfying}
\begin{equation}\label{dissipativity}
	\langle x-y,b(x)-b(y) +G(x,z)-G(y,z) \rangle\leq \beta_0(r)|x-y|
\end{equation}
for any $x,y,z\in\mathbb{R}^d$ with $|x-y|=r.$
Notice that for Langevin diffusion \eqref{LD}, this reduces to 
\begin{equation}\label{dissipativity-single}
	\langle x-y,b(x)-b(y) \rangle\leq \beta_0(r)|x-y|.
\end{equation}
Assume that $\beta_0(r)$ satisfies the following conditions:

\begin{enumerate}
	\item[{\bf(A3)}] \begin{equation}\label{A3-infinity} \limsup_{r\to +\infty} \frac{\beta_0(r)}{r} < 0,
	\end{equation}
	i.e., the drift  is dissipative at infinity;
	\item[{\bf(A4)}] there exists $M\in\mathbb{R},$ such that for any $r>0,$
	\begin{equation}\label{A4-linear}
		\beta_0(r)\leq rM.
	\end{equation}
\end{enumerate}

Without loss of generality, we assume that $\beta_0(0)=0$. This extension is continuous in the settings below: with $G=0$, or under (A5), taking $x=ru$, $y=z=0$, and $|u|=1$ in the dissipativity inequality gives
\[
-C(r^\beta+r)\le\beta_0(r)\le Mr,\qquad 0<r\le1,
\]
for a finite constant $C$, by (A1) and the local Lipschitz bound on $G$. Hence $\beta_0(r)\to0$ as $r\downarrow0$. This convention is used whenever a coupled distance vanishes.

We adopt the reference function $h$ used in \cite{LWZ21} for the coupling construction. 
For any function $f\in C^2(0,+\infty)$ and $r>0$, let $\mathcal{L}_{ref}$ be the generator defined by
\begin{equation*}
	\mathcal{L}_{ref}f(r):=4f''(r)+\beta_0(r)f'(r).
\end{equation*}

Let $h:\rr^+\to \rr^+$ be the function determined by $h(0)=0$ and
\begin{equation}\label{h'}
	h^{\prime}(r)=\frac{1}{4} \exp\left(-\frac{1}{4} \int_0^r \beta_0(s)ds\right) \int_r^{+\infty} s\cdot\exp\left(\frac{1}{4} \int_0^s \beta_0(u)du\right) ds.
\end{equation}
As in \cite{LWZ21}, the continuity of $\beta_0$ and its dissipativity at infinity imply that $h'$ is $C^1$ and that $h$ is a well-defined $C^2$ function. Moreover, $h$ is a solution of the one-dimensional Poisson equation
\begin{equation}\label{h}
	\LL_{ref} h(r)=4 h^{\prime\prime}(r)+\beta_0(r)h^{\prime}(r)=-r,\ r>0
\end{equation}
with $h(0)=0$. 
Note that when the dissipativity at infinity condition {\bf (A3)} is satisfied, $\beta_0(r)$ can be bounded from above by $-a_0r+b_0$ (with $a_0,b_0>0$), so $\|h'\|_\infty:=\sup_{r\ge0}|h'(r)|<+\infty$. In particular, $h(r)>0$ for $r>0.$ 

\begin{remarks}[Non-Lipschitz drifts]\label{rem:holder-drift}
\rm The assumptions allow non-Lipschitz drifts for every
$\beta\in(0,1)$. For instance, take $d=1$, $G=0$, and
\[
b(x)=-x-x^3-\operatorname{sgn}(x)|x|^\beta,
\qquad \operatorname{sgn}(0)=0.
\]
The signed-power H\"older estimate verifies {\bf (A1)} with
$l=3-\beta$, $L=8$, $\varkappa=2$, and $K=16$.
Monotonicity gives $(x-y)(b(x)-b(y))\le-|x-y|^2$, so
{\bf (A3)--(A4)} hold with $\beta_0(r)=-r$ and $M=-1$.
For any $c_1>0$, {\bf (A2)} holds with
$c_2=1+c_1^2/2$ and $\theta^*=(2c_1)^{-1/\alpha}$;
choosing $c_1>4K^2$ also ensures {\bf (A2')} when $\alpha=1/2$.
A finite initial sixth moment suffices for the results below.
A locally expanding interacting model is presented in
Example~\ref{ex:holder-local-expansion}.
\end{remarks}

\blue{We next present the settings and main results separately for the single-particle Langevin diffusion, the interacting particle system, and the McKean--Vlasov equation. A common feature of the three statements below is that the total approximation error is decomposed into explicit contributions. In the single-particle and particle settings, the error splits into relaxation and discretization terms; in the McKean--Vlasov setting, an additional finite-particle term appears.}
\subsection{Langevin diffusion}
\blue{First, we consider the case $G=0$ in the McKean--Vlasov equation, which gives the following classical Langevin diffusion:}

\begin{equation}\label{singleparticle}
	\begin{cases}
		dX_t=\sqrt{2}dB_t+b(X_t)dt,\\
		X_0\sim \mu_0.
	\end{cases}
\end{equation}
\blue{Under Assumptions {\bf (A1)},  {\bf(A3)} and  {\bf(A4)}, 
the SDE above admits a unique nonexplosive strong solution and a unique invariant measure, denoted by $\pi$.}

Consider the following Euler-type discretization of SDE \eqref{singleparticle}:
\begin{equation}\label{discretization} 
	\begin{cases}
		\widetilde{X}_{(k+1)\theta}=\widetilde{X}_{k\theta}+\theta b_{\alpha}(\widetilde{X}_{k\theta})
		+\sqrt{2}(B_{(k+1)\theta}-B_{k\theta}),\\
		\widetilde{X}_0=X_0,
\end{cases}\end{equation}
where  $\theta>0$ is the step size and $k=0, 1, \cdots$.

The interpolation process is defined as follows:
\begin{equation}\label{interpolation-single}
	\widetilde{X}_{t}=\widetilde{X}_{k\theta}+(t-k\theta)b_{\alpha}(\widetilde{X}_{k\theta})+\sqrt{2}(B_{t}-B_{k\theta}), \ \ t\in[k\theta,(k+1)\theta].
\end{equation}	

For any $r>0$, denote by $m_r:=\int_{\mathbb R^d}|x|^r\,\mu_0(dx)$ the $r$-th moment of $\mu_0$. Let $\mu_{\widetilde{X}_t}$ be the law of $\widetilde{X}_t$. 

\begin{bluechange}
Set
\begin{align*}
	\eta:=\begin{cases}
		\min\left\{\frac{1}{4},\theta^*,\frac{1}{c_1},(\frac{c_1}{8K^2})^{\frac{1}{1-2\alpha}}\right\}, &\alpha\in(0,\frac{1}{2}),\\
		\min\left\{\frac{1}{4},\theta^*,[2(c_1-4K^2)]^{-1}\right\}, &\alpha=\frac{1}{2},
	\end{cases}
\end{align*}
and, for $r>0$, define the moment constants
\begin{equation}\label{moment-inter-Langevin}
	C_r:=\sup_{0<\theta<\eta}\sup_{t\ge0}\mathbb E|\widetilde X_t|^r,
	\qquad C_0:=1.
\end{equation}
Their finiteness up to the moment order required below is established in Lemma~\ref{pri}.
\end{bluechange}

We have the following result.

\begin{thm}\label{mainre1}

	Suppose that Assumptions {\bf (A1)--(A4)} hold and, when $\alpha=\frac12$, assume additionally {\bf (A2')}. Assume that $m_q<\infty$ for some even integer $q$ satisfying $q\geq\max\{2,2l+2\beta-1\}$. Then, for any $0<\theta<\eta$, any $\varepsilon>0$ satisfying $1-\varepsilon M>0$, and any $T>0$,
	\begin{equation}\label{eq:Lan-main-bound}
		W_1(\mu_{\widetilde{X}_T},\pi)\leq A_\varepsilon e^{-K_\varepsilon T}W_1(\mu_0,\pi)
		+\sup\limits_{r>0}\frac{r}{h(r)+\varepsilon r}\frac{\Gamma_1\theta^\alpha+\Gamma_2\theta^{\frac\beta2}}{K_\varepsilon}.
	\end{equation}
	The constants in \eqref{eq:Lan-main-bound} are
	\begin{align*}
		A_\varepsilon&=\sup\limits_{r>0}\frac{r}{h(r)+\varepsilon r}\sup\limits_{r>0}\frac{h(r)+\varepsilon r}{r},\\
		K_\varepsilon&=\frac{1-\varepsilon M}{\|h'\|_\infty+\varepsilon},\\
		\Gamma_1&=2K(\|h'\|_\infty+\varepsilon)(1+C_{2l+2\beta-1}),\\
		\Gamma_2&=L(\|h'\|_\infty+\varepsilon)(1+2(C_{\frac{2l}{2-\beta}})^{1-\frac{\beta}{2}})(8K^2(1+C_2)+2d)^{\frac{\beta}{2}}.
	\end{align*}

\end{thm}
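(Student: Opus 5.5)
We couple the interpolated tamed chain \eqref{interpolation-single} with a stationary exact solution $Y_t$ of \eqref{singleparticle}, $Y_0\sim\pi$, with $(X_0,Y_0)$ an optimal $W_1$ coupling of $(\mu_0,\pi)$. The coupling is the reflection coupling of \cite{Eberle16,LWZ21}: on $\{Z_t\neq0\}$, where $Z_t:=\widetilde X_t-Y_t$ and $e_t:=Z_t/|Z_t|$, we drive $Y$ by $dB_t-2e_te_t^{\top}dB_t$. After the hitting time of $0$ we use synchronous coupling, or an approximate version in which reflection is switched off for $|Z_t|<\delta$ and $\delta\downarrow0$ at the end. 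We then apply Itô's formula to $f_\varepsilon(r):=h(r)+\varepsilon r$ evaluated at $r_t=|Z_t|$. The martingale part of $dr_t$ is $2\sqrt2\,\langle e_t,dB_t\rangle$, so the quadratic variation is $8\,dt$ and the second-order term is $4f_\varepsilon''(r_t)\,dt$. The drift of $r_t$ is $\langle e_t,b_\alpha(\widetilde X_{k\theta})-b(Y_t)\rangle$, which we split as
\[
\langle e_t,b(\widetilde X_t)-b(Y_t)\rangle+\langle e_t,b(\widetilde X_{k\theta})-b(\widetilde X_t)\rangle+\langle e_t,b_\alpha(\widetilde X_{k\theta})-b(\widetilde X_{k\theta})\rangle .
\]
By \eqref{dissipativity-single}, the first term is at most $\beta_0(r_t)$.

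We first handle the main term. By \eqref{h} and {\bf (A4)},
\[
4f_\varepsilon''+\beta_0f_\varepsilon'=-r+\varepsilon\beta_0(r)\le-(1-\varepsilon M)r .
\]
Since $f_\varepsilon'\le\|h'\|_\infty+\varepsilon$, we get $r\ge f_\varepsilon(r)/(\|h'\|_\infty+\varepsilon)$, because $f_\varepsilon(r)\le(\|h'\|_\infty+\varepsilon)r$. Hence this contribution is at most $-K_\varepsilon f_\varepsilon(r_t)$. For this step we need $h''$ to be continuous at $0$, or a one-sided treatment using $f_\varepsilon'(0+)>0$ together with the convention $\beta_0(0)=0$. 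Next, $h'\ge0$, which holds because $h'$ in \eqref{h'} is positive. Using $|f_\varepsilon'|\le\|h'\|_\infty+\varepsilon$, the two error terms contribute at most
\[
(\|h'\|_\infty+\varepsilon)\bigl(\mathbb E|b(\widetilde X_{k\theta})-b(\widetilde X_t)|+\mathbb E|b_\alpha(\widetilde X_{k\theta})-b(\widetilde X_{k\theta})|\bigr).
\]
Taking expectations then gives the differential inequality
\[
\frac{d}{dt}\mathbb E f_\varepsilon(r_t)\le-K_\varepsilon\mathbb Ef_\varepsilon(r_t)+\Gamma_1\theta^\alpha+\Gamma_2\theta^{\beta/2}.
\]
Gronwall yields $\mathbb Ef_\varepsilon(r_T)\le e^{-K_\varepsilon T}\mathbb Ef_\varepsilon(r_0)+(\Gamma_1\theta^\alpha+\Gamma_2\theta^{\beta/2})/K_\varepsilon$. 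Finally, $r\le \sup_s\frac{s}{f_\varepsilon(s)}\,f_\varepsilon(r)$ and $\mathbb Ef_\varepsilon(r_0)\le\sup_s\frac{f_\varepsilon(s)}{s}\,W_1(\mu_0,\pi)$. These two comparisons produce $A_\varepsilon$ and the prefactor in \eqref{eq:Lan-main-bound}, since $W_1(\mu_{\widetilde X_T},\pi)\le\mathbb E r_T$ by stationarity of $Y$.

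It remains to bound the two error terms uniformly in $t$, using Lemma~\ref{pri}, whose moment bounds are finite for $q\ge 2l+2\beta-1$.

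The taming error is bounded directly by \eqref{prop1}: $\mathbb E|b-b_\alpha|(\widetilde X_{k\theta})\le2K\theta^\alpha(1+C_{2l+2\beta-1})$, which gives $\Gamma_1$.

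For the time-discretization error we use {\bf (A1)} and then Hölder's inequality with exponents $\frac{2}{2-\beta}$ and $\frac2\beta$:
\[
\mathbb E\bigl[(1+|\widetilde X_{k\theta}|^l+|\widetilde X_t|^l)|\widetilde X_t-\widetilde X_{k\theta}|^\beta\bigr]\le\bigl(1+2C_{\frac{2l}{2-\beta}}^{1-\beta/2}\bigr)\bigl(\mathbb E|\widetilde X_t-\widetilde X_{k\theta}|^2\bigr)^{\beta/2}.
\]
Here we use Minkowski's inequality in $L^{2/(2-\beta)}$ on the weight to get $\|1+|\widetilde X_{k\theta}|^l+|\widetilde X_t|^l\|\le 1+2C_{2l/(2-\beta)}^{1-\beta/2}$. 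For the increment, \eqref{interpolation-single} and \eqref{prop1} give
\[
\mathbb E|\widetilde X_t-\widetilde X_{k\theta}|^2\le2\theta^2\cdot 4K^2\theta^{-2\alpha}\cdot2(1+C_2)+4d\theta .
\]
Since $\alpha\le\frac12$ and $\theta\le1$, we have $\theta^{2-2\alpha}\le\theta$, so this is at most $\theta(16K^2(1+C_2)+4d)$. That is $2\theta(8K^2(1+C_2)+2d)$; we need the constants to match exactly, possibly by keeping the sharper cross-term handling that yields $\theta(8K^2(1+C_2)+2d)$. This gives $\Gamma_2\theta^{\beta/2}$.

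The main obstacle we expect is justifying Itô's formula for $f_\varepsilon(|Z_t|)$ through the singularity at $Z=0$ and across mesh points, where the drift is frozen. We would handle this with the approximate reflection coupling with a cutoff at $\delta$, as in \cite{LWZ21}. On $\{r_t<\delta\}$ the synchronous-coupling drift contributes at most $O(\delta)$ plus the same error terms. We then let $\delta\to0$, using the fact that the constants above do not depend on $\delta$.
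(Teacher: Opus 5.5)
Your argument is correct, but it is organized differently from the paper's. The paper uses the triangle inequality through the exact solution $X_T$ started from $\mu_0$. Proposition~\ref{singlee} couples the tamed chain with $X$ from identical initial data, so only the discretization term survives. Lemma~\ref{singlee2} is a separate contraction estimate for two exact solutions, one started from $\pi$, coupled by reflection up to their coalescence time. You instead run a single coupling of the tamed chain against a stationary solution $Y$, so relaxation and discretization come out of one Gronwall inequality.

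The constants agree exactly. Before the paper sets $X_0=\widetilde X_0$, the computation in Proposition~\ref{singlee} yields $A_\varepsilon e^{-K_\varepsilon T}\mathbb E|X_0-\widetilde X_0|$ plus the discretization term. Your optimal coupling with $Y_0\sim\pi$ turns that first term directly into $A_\varepsilon e^{-K_\varepsilon T}W_1(\mu_0,\pi)$.

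Your route is shorter and needs no separate contraction lemma. The paper's split instead isolates the $\pi$-free bound $W_1(\mu_{\widetilde X_T},\mu_{X_T})\le\mathcal E_{\rm disc}^{\rm Lan}(\theta)$. Its particle analogue (Proposition~\ref{multi}) is reused for Theorem~\ref{MVTHM} and the corollaries, where one must pass through $X_T^{i,N}$ and $\bar X_T$ by the triangle inequality anyway.

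There are two details to fix. First, the variant ``synchronous coupling after the hitting time of $0$'' is not usable here. The drifts $b_\alpha(\widetilde X_{k\theta})$ and $b(Y_t)$ differ, so $Z$ leaves $0$ again, and without noise the term $4h''$ that produces $-r$ is lost. The $\delta$-cutoff coupling you settle on at the end is the right choice, not merely a convenience.

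Second, to get $\Gamma_2$ exactly, expand $\mathbb E|(t-k\theta)b_\alpha(\widetilde X_{k\theta})+\sqrt2(B_t-B_{k\theta})|^2$ and drop the cross term. It vanishes because the Brownian increment is centred and independent of $\mathscr F_{k\theta}$; in your coupling $\widetilde X$ still has the law of the scheme. Using $|b_\alpha(x)|^2\le8K^2\theta^{-2\alpha}(1+|x|^2)$ and $\theta^{2-2\alpha}\le\theta$, this gives exactly $\theta(8K^2(1+C_2)+2d)$, as in Lemma~\ref{pri2}. By contrast, $|a+b|^2\le2|a|^2+2|b|^2$ would inflate $\Gamma_2$ by a factor $2^{\beta/2}$.
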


For later reference, denote the two terms on the right-hand side of \eqref{eq:Lan-main-bound} by
\begin{equation}\label{eq:relax}
\begin{split}
    \mathcal E_{\rm relax}^{\rm Lan}(T)&:=A_\varepsilon e^{-K_\varepsilon T}W_1(\mu_0,\pi),\\
	\mathcal E_{\rm disc}^{\rm Lan}(\theta)&:=\sup\limits_{r>0}\frac{r}{h(r)+\varepsilon r}
	\frac{\Gamma_1\theta^\alpha+\Gamma_2\theta^{\frac\beta2}}{K_\varepsilon}.
\end{split}
\end{equation}
 Here, since $h(r)>0$ and $h(r)\leq\|h'\|_\infty r$ for $r>0$, it follows that $1\leq A_\varepsilon\leq(\|h'\|_\infty+\varepsilon)/\varepsilon$. Moreover,
 $q\ge 2\varkappa+1=2l+2\beta-1\ge 2l/(2-\beta)$ implies $C_{2\varkappa+1}$ and $C_{2l/(2-\beta)}$ are finite by Lemma~\ref{pri}.  This moment comparison, which precedes Theorem~\ref{mainre2}, also ensures that \(\widetilde C_{2\varkappa+1}\) and \(\widetilde C_{2l/(2-\beta)}\) are finite under the stated initial moment condition.

\begin{remarks}\label{rem:dim1}
    The two terms of \eqref{eq:relax} represent the relaxation and discretization errors, respectively; the two powers of $\theta$ in the latter correspond to the taming error and the local H\"older error.  Thus the discretization order remains $O(\theta^{\min{\{\alpha,\beta/2\}}})$.  

    Regarding dimension dependence, it follows from Lemma~\ref{pri} that \(C_r\lesssim d^{r/2}\), with the remaining model and coupling constants independent of \(d\). The discretization bound in Theorem~\ref{mainre1} then has dimension dependence of order \(O(d^{\max\left\{l+\beta-\frac12,\frac{\beta+l}{2}\right\}})\). In the linear growth case \(\beta+l\le 1\), taming is unnecessary, and the same argument applied to the standard Euler scheme yields a dimension factor of order \(\sqrt d\). For comparison, the leading KL bound in \cite{MFWB22} has a dimension factor of order \(d^2\).

\end{remarks}

\begin{thm}\label{thm:mainre2}
	Assume the same assumptions and notations as in Theorem \ref{mainre1}. Then, when $0<\theta<\eta,$  $\left\{\widetilde{X}_{k\theta}\right\}_{k\in\mathbb{N}}$ admits a unique invariant distribution denoted by $\pi^{\rm TEM}_\theta$ and 

	\begin{equation}\label{eq:Lan-invariant-bound}
		W_1(\pi^{\rm TEM}_\theta,\pi)\le \mathcal E_{\rm disc}^{\rm Lan}(\theta).
	\end{equation}

\end{thm}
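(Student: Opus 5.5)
The plan is to obtain existence of $\pi^{\rm TEM}_\theta$ from tightness plus the Feller property. Uniqueness and the bias bound \eqref{eq:Lan-invariant-bound} will then follow from Theorem~\ref{mainre1}, applied along the discrete times $T=k\theta$.

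\textbf{Step 1 (existence).} The chain $\{\widetilde X_{k\theta}\}$ is a time-homogeneous Markov chain with transition kernel
\[
P_\theta(x,\cdot)=\mathcal N\bigl(x+\theta b_\alpha(x),\,2\theta I_d\bigr).
\]
Since $b$ is continuous by (A1), $b_\alpha$ is continuous, so $P_\theta$ is Feller. Start the chain from a fixed deterministic point, say $\delta_0$, which has all moments. Lemma~\ref{pri} then gives $\sup_k\mathbb E|\widetilde X_{k\theta}|^2\le C_2<\infty$, so the Ces\`aro averages $\frac1n\sum_{k=0}^{n-1}\mu_{\widetilde X_{k\theta}}$ are tight. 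By the Krylov--Bogoliubov argument, any weak limit point is invariant for $P_\theta$. Lower semicontinuity of moments under weak convergence shows that this invariant law has $q$-th moment bounded by $C_q$, so in particular it has a finite first moment.

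\textbf{Step 2 (bias bound).} Let $\nu$ be any invariant law of $P_\theta$ with finite $q$-th moment; the law from Step 1 qualifies. Start the tamed scheme from $\widetilde X_0\sim\nu$. Then $\mu_{\widetilde X_{k\theta}}=\nu$ for every $k$. Theorem~\ref{mainre1} with $\mu_0=\nu$ (so that $m_q<\infty$) and $T=k\theta$ gives
\[
W_1(\nu,\pi)\le A_\varepsilon e^{-K_\varepsilon k\theta}W_1(\nu,\pi)+\mathcal E^{\rm Lan}_{\rm disc}(\theta).
\]
Here $W_1(\nu,\pi)<\infty$ because both measures have finite first moments; for $\pi$ this comes from the dissipativity in Remark 2.2. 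Letting $k\to\infty$ yields \eqref{eq:Lan-invariant-bound}.

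One point must be checked: the constants $\Gamma_1$ and $\Gamma_2$ involve $C_r$, which is defined as a supremum over the scheme started from $\mu_0$. When we restart the chain from $\nu$, these constants must still be controlled. I would use the following fact: since $\nu$ is invariant, $\mathbb E|\widetilde X_t|^r$ for the interpolated process started from $\nu$ is bounded by a uniform-in-$\theta$ constant. This constant comes from the moment recursion of Lemma~\ref{pri}, whose stationary bound depends only on $c_1,c_2,K,d,r$ and not on the initial moment. The step-size restrictions built into $\eta$ are what make the recursion stable. Granting this, $\mathcal E^{\rm Lan}_{\rm disc}(\theta)$ is understood with these stationary moment constants, or one notes that they are dominated by the $C_r$ of the statement.

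\textbf{Step 3 (uniqueness).} The argument relies on a perturbation estimate that I take to be established inside the proof of Theorem~\ref{mainre1} rather than stated by it: for two copies of the scheme driven by the same synchronous or reflection coupling, the contraction in the metric $h(r)+\varepsilon r$ holds up to the discretization defect. Comparing each copy with the exact diffusion started from the same law is not enough by itself, because it yields only
\[
W_1(\nu_1,\nu_2)\le 2\mathcal E^{\rm Lan}_{\rm disc},
\]
not $W_1(\nu_1,\nu_2)=0$. I would therefore argue directly on the scheme. Take two invariant laws $\nu_1,\nu_2$, whose moments are finite by the moment argument above applied to any invariant law. Couple the two chains by reflection of the Gaussian increments when $\widetilde X\neq\widetilde Y$, and apply the Itô/Taylor estimate for $f(r)=h(r)+\varepsilon r$ to the difference process of the two interpolations.

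This is the main obstacle. The difference of the drifts, evaluated at the grid points, produces local error terms of order $\theta^{\beta/2}$ that do not vanish. A possible fix is a \emph{strict} contraction of the one-step kernel in a modified distance for $\theta$ small. Alternatively, one can obtain irreducibility plus aperiodicity (the Gaussian kernel has positive density everywhere) together with the Lyapunov drift $V(x)=|x|^2$ from (A2). Harris's theorem then yields uniqueness directly, and I would use this route first.
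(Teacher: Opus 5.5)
Your outline is sound, and the uniqueness route you settle on in Step 3 is the paper's. The Gaussian kernel has a strictly positive continuous density, so the chain is Lebesgue-irreducible and aperiodic with compact sets small. A quadratic Lyapunov drift then gives, via Meyn--Tweedie Theorem 16.0.1, a unique invariant law with $V$-uniform geometric ergodicity.

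Two details need fixing. First, take $V=1+|x|^2$. Second, the drift inequality does not follow from (A2) alone: you need the step-size restriction in $\eta$ to absorb $\theta^2|b_\alpha(x)|^2\le 8K^2\theta^{2-2\alpha}(1+|x|^2)$. This makes the coefficient $1-2c_1\theta+8K^2\theta^{2-2\alpha}$ at most $1-\kappa_\alpha\theta<1$, and it is where (A2') enters for $\alpha=\frac12$. Since Harris's theorem also gives existence, your Step 1 and the coupling discussion opening Step 3 are unnecessary.

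The genuine difference is Step 2, and there one justification fails. The paper never restarts the chain at stationarity. It applies the $V$-uniform bound to $1$-Lipschitz $f$ with $f(0)=0$ (so $|f|\le V$), which gives $W_1(\mu_{\widetilde X_{k\theta}},\pi^{\rm TEM}_\theta)\le C\rho^k\mu_0(V)$ for the chain started from the given $\mu_0$. It then combines this with Theorem~\ref{mainre1} for the same $\mu_0$ by the triangle inequality and lets $k\to\infty$. Because $\mu_0$ is never replaced, the constants $C_r$ in $\mathcal E^{\rm Lan}_{\rm disc}(\theta)$ are exactly those of the statement.

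Your fixed-point argument applies Theorem~\ref{mainre1} with initial law $\nu$. It needs no convergence rate, but it produces the discretization term with the moment constants of the chain started from $\nu$. Your reason why these are dominated by the statement's $C_r$ does not hold. The recursion of Lemma~\ref{pri} bounds stationary moments by the $\mu_0$-independent quantity $(H^{\rm L}_{r,d})^{r/2}$. But $C_r$ is an actual supremum over trajectories from $\mu_0$, and it can be strictly smaller than that crude bound.

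The domination is nevertheless true, and it follows from your Step 3. Harris gives $\mu_0\widetilde P_\theta^k\to\nu$ weakly, so lower semicontinuity yields $\nu(|x|^r)\le\liminf_k\mathbb E|\widetilde X_{k\theta}|^r\le C_r$. The same holds at interpolation times $k\theta+s$, because the partial-step kernel $\mathcal N(x+sb_\alpha(x),2sI_d)$ is Feller. With this patch your route is complete, and it even shows the bias bound with $\mu_0$-independent stationary constants. Once $V$-uniform ergodicity is available, however, the paper's triangle-inequality argument is shorter and avoids this moment bookkeeping entirely.
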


\subsection{Interacting particle system}
Consider the following $N$-particle system:
\begin{equation}\label{multiparticle}
	\begin{cases}		dX_t^{i,N}=\sqrt{2}dB_t^i+b(X_t^{i,N})dt+\frac{1}{N-1}\sum\limits_{j\neq i}G(X_t^{i,N},X_t^{j,N})dt,\\
		X_0^{i,N}=X_0^i,i=1,...,N,
	\end{cases}
\end{equation}
\blue{where $X_0^1,\ldots,X_0^N$ are i.i.d. random variables with common law $\mu_0$, and $B^1,\ldots,B^N$ are independent $d$-dimensional Brownian motions in $\mathbb{R}^d$, independent of the entire initial family $(X_0^1,\ldots,X_0^N)$.}

\blue{We introduce the following key assumption on the interaction drift:}

\[
\text{({\bf H})}:\qquad \|\nabla_{y}G\|_{\infty}\,\|h^{\prime}\|_{\infty}<1.
\]
Here $h$ is given by \eqref{h'}.

Furthermore, we assume that 
\begin{align}	\label{A5}
	\text{(\bf A5)}:\quad
	\blue{G\in C^1(\mathbb{R}^d\times\mathbb{R}^d;\mathbb{R}^d),
	\qquad \|\nabla_xG\|_\infty+\|\nabla_yG\|_\infty<+\infty.}
\end{align}

Denote
\[
K_G:=\max\left\{|G(0,0)|,\|\nabla_xG\|_\infty,\|\nabla_yG\|_\infty\right\}.
\]
\blue{Then, under Assumptions {\bf (H)} and {\bf (A5)}, we have}
\begin{equation}\label{Glip}
	|G(x,y)|\leq {K_G}(1+|x|+|y|),\blue{\qquad \forall x,y\in\mathbb{R}^d.}
\end{equation}

\begin{remarks}\label{rem:LWZ-general-drift}
 Although \cite{LWZ21} is formulated for a gradient system, its Remark~2.13 extends the relevant results to a general drift that is uniformly dissipative in the state variable and sufficiently weakly Lipschitz in the law. In the present setting, let
\[
\mathfrak b(x,\mu):=b(x)+\int_{\mathbb R^d}G(x,z)\,\mu(dz).
\]
Under Assumptions {\bf (A3)--(A5)}, for $r=|x-y|$ we have
\begin{align}
\langle x-y,\mathfrak b(x,\mu)-\mathfrak b(y,\mu)\rangle
&\le \beta_0(r)r,\\
|\mathfrak b(x,\mu)-\mathfrak b(x,\nu)|
&\le \|\nabla_yG\|_\infty W_1(\mu,\nu).
\end{align}
Indeed, the first inequality follows by integrating \eqref{dissipativity} with respect to $\mu$, and the second follows by coupling $\mu$ and $\nu$ and using the $y$-Lipschitz bound on $G$. Hence Assumption {\bf (H)} is precisely the required weak-interaction condition. Consequently as indicated in  \cite{LWZ21}, the existence and uniqueness of the solutions to the McKean--Vlasov equation  and interacting particle system hold. The contraction estimate of \cite[Corollary~2.9]{LWZ21} and the uniform-in-time propagation-of-chaos estimate of \cite[Theorem~2.11(b)]{LWZ21} apply with $\|\nabla_{xy}^2W\|_\infty$ replaced by $\|\nabla_yG\|_\infty$. For concentration, we use the coupling and martingale strategy of \cite[Theorem~2.5]{LWZ21}, but derive the constants directly in the proof of Corollary~\ref{coro:concen}, retaining the Brownian variance factor $2$ from the noise $\sqrt2\,dB_t$. The Lyapunov and contraction estimates also give unique invariant laws for \eqref{multiparticle} and \eqref{MV}, denoted by $\pi^N$ and $\pi^{MV}$, respectively.
\end{remarks}

If $K_G=0$, then $G(0,0)=0$ and both Jacobians of $G$ vanish identically; hence $G\equiv0$ on $\mathbb R^d\times\mathbb R^d$. In that case there is no interaction and the model reduces to independent copies of the Langevin dynamics considered in Section~2.2. Therefore, throughout the genuinely interacting case below, we assume $K_G>0$.

\begin{enumerate}
	\item[{\bf(A5')}]\label{A51} {When $\alpha=\frac{1}{2}$, assume additionally that $c_1-\|\nabla_xG\|_\infty-\sqrt{2}K_G-8K^2>0$, where $c_1$ and $K$ are the constants given in {\bf (A2)} and Remark 2.1.} 
\end{enumerate}

We next consider an Euler-type discretization of the interacting particle system. Let $\widetilde X_0^{i,N}=X_0^i$, $1\le i\le N$, be the initial values. For $k\in\mathbb N$ and $1\le i\le N$, the scheme is defined by
\begin{equation}\label{chazhi}
	\widetilde{X}_{(k+1)\theta}^{i,N}=\widetilde{X}_{k\theta}^{i,N}+\theta b_{\alpha}(\widetilde{X}_{k\theta}^{i,N})+\frac{\theta}{N-1}\sum\limits_{j\neq i}G(\widetilde{X}_{k\theta}^{i,N},\widetilde{X}_{k\theta}^{j,N})+\sqrt{2}(B^i_{(k+1)\theta}-B^i_{k\theta}).
\end{equation}
For $t\in[k\theta,(k+1)\theta]$, define the interpolation by
\begin{equation}\label{interpolation}
\begin{aligned}
	\widetilde{X}_{t}^{i,N}&=\widetilde{X}_{k\theta}^{i,N}+(t-k\theta)b_{\alpha}(\widetilde{X}_{k\theta}^{i,N})\\
		&\quad+\frac{t-k\theta}{N-1}\sum\limits_{j\neq i}G(\widetilde{X}_{k\theta}^{i,N},\widetilde{X}_{k\theta}^{j,N})+\sqrt{2}(B^i_{t}-B^i_{k\theta}).
\end{aligned}
\end{equation}

Let $\mu_{\widetilde{X}^N_t}$ be the law of $\widetilde{X}^N_t=(\widetilde{X}^{1,N}_t,\cdots,\widetilde{X}^{N,N}_t)$.

\begin{bluechange}
Set
\begin{align*}
	\widetilde{\eta}:=\begin{cases}
		\min\left\{\begin{gathered}\frac{1}{4},\theta^*,(\frac{c_1-\|\nabla_xG\|_\infty -\sqrt{2}K_G}{16K^2 +12K_G^2})^{\frac{1}{1-2\alpha}},\\
		\frac{1}{2c_1-2\|\nabla_xG\|_\infty-\sqrt{2}K_G}\end{gathered}\right\}, &\alpha\in(0,\frac{1}{2}),\\
		\min\left\{\begin{gathered}\frac{1}{4},\theta^*,\frac{c_1-\|\nabla_xG\|_\infty -\sqrt{2}K_G-8K^2}{12K_G^2},\\
		\frac{1}{c_1-\|\nabla_xG\|_\infty-\sqrt{2}K_G-8K^2},\\
		\frac{1}{2c_1-2\|\nabla_xG\|_\infty-\sqrt{2}K_G}\end{gathered}\right\}, &\alpha=\frac{1}{2}.
	\end{cases}
\end{align*}
For $r>0$, define the moment constants
\begin{equation}\label{moment-inter-Npartical}
\widetilde C_r:=\sup_{N\ge2}\sup_{0<\theta<\widetilde\eta}\sup_{t\ge0}
	\mathbb E|\widetilde X_t^{1,N}|^r,
	\qquad \widetilde C_0:=1.
\end{equation}
Their finiteness up to the moment order required below is established in Lemma~\ref{primul}.
\end{bluechange}

\begin{thm}\label{mainre2}

	Suppose that Assumptions {\bf (A1)--(A5)} and {\bf (H)} hold and, when $\alpha=\frac12$, assume additionally {\bf (A5')}. Assume that $m_q<\infty$ for some even integer $q$ satisfying $q\geq\max\{2,2l+2\beta-1\}$, and that $c_1-\|\nabla_{x}G\|_\infty-\sqrt{2}K_G>0$. Then, for any $0<\theta<\widetilde{\eta}$, any $\varepsilon>0$ satisfying $1-\varepsilon M-(\|h'\|_\infty+\varepsilon)\|\nabla_{y}G\|_{\infty}>0$, and any $T>0$,
	\begin{equation}\label{eq:N-main-bound}
	\begin{aligned}
		W_{1,d_{l^1}}(\mu_{\widetilde{X}^N_T},\pi^N)
		\leq{}&A_\varepsilon e^{-\widetilde K_\varepsilon T}W_{1,d_{l^1}}(\mu_0^{\otimes N},\pi^N)\\
		&+\sup\limits_{r>0}\frac{r}{h(r)+\varepsilon r}
		\frac{\Gamma_3\theta^\alpha+\Gamma_4\theta^{\frac\beta2}+\Gamma_5\theta^{\frac12}}{\widetilde K_\varepsilon}N.
	\end{aligned}
	\end{equation}
	The constants in \eqref{eq:N-main-bound} are
	\begin{align*}
		A_\varepsilon&=\sup_{r>0}\frac{r}{h(r)+\varepsilon r}\sup_{r>0}\frac{h(r)+\varepsilon r}{r},\\
		\widetilde{K}_\varepsilon&=\frac{1-\varepsilon M-(\|h'\|_\infty+\varepsilon)\|\nabla_{y}G\|_{\infty}}{\|h'\|_\infty+\varepsilon},\\
		\Gamma_{3}&=(\|h'\|_\infty+\varepsilon)(2K+2K\widetilde{C}_{2l+2\beta-1}),\\
		\Gamma_4&=L(\|h'\|_\infty+\varepsilon)\left(1+2(\widetilde{C}_{\frac{2l}{2-\beta}})^{1-\frac{\beta}{2}}\right)\\
		&\quad\times(2d+16K^2+6K_G^2 +16K^2\widetilde{C}_2+12K_G^2\widetilde{C}_2)^{\frac{\beta}{2}},\\
		\Gamma_5&=(\|h'\|_\infty+\varepsilon)(\|\nabla_yG\|_\infty+\|\nabla_xG\|_\infty)\\
		&\quad\times\sqrt{2d+16K^2+6K_G^2 +16K^2\widetilde{C}_2+12K_G^2\widetilde{C}_2},
	\end{align*}  
	and $\mu_{0}^{\otimes N}$ is the $N$-fold product measure of $\mu_{0}$ with itself. Equivalently, for $\bar d_N:=N^{-1}d_{l^1}$, the same bound reads
	\begin{equation}\label{eq:N-main-bound-normalized}
	\begin{aligned}
	W_{1,\bar d_N}(\mu_{\widetilde X_T^N},\pi^N)
	\leq{}& A_\varepsilon e^{-\widetilde K_\varepsilon T}
	W_{1,\bar d_N}(\mu_0^{\otimes N},\pi^N)\\
	&+\sup_{r>0}\frac{r}{h(r)+\varepsilon r}
	\frac{\Gamma_3\theta^\alpha+\Gamma_4\theta^{\beta/2}+\Gamma_5\theta^{1/2}}
	{\widetilde K_\varepsilon}.
	\end{aligned}
	\end{equation}
	The discretization term in \eqref{eq:N-main-bound-normalized} is uniform over $N\ge2$.

\end{thm}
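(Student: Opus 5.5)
We couple the tamed particle interpolation \eqref{interpolation} with a copy $X^N_t$ of the exact system \eqref{multiparticle} started from the stationary law $\pi^N$, using the approximate componentwise reflection coupling of \cite{Eberle16,LWZ21}. For each $i$ we set $Z^i_t=\widetilde X^{i,N}_t-X^{i,N}_t$ and $r^i_t=|Z^i_t|$. The noise of particle $i$ is reflected in the direction $e^i_t=Z^i_t/r^i_t$ while $r^i_t\ge\delta$, and the two copies are synchronously coupled when $r^i_t$ is small, with a Lipschitz interpolation in between; at the end we let $\delta\downarrow0$. We work with the distance $\rho(x,y)=\sum_i f(|x^i-y^i|)$, where $f(r)=h(r)+\varepsilon r$. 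Then $f'\le\|h'\|_\infty+\varepsilon$, and $f$ is comparable to $r$ with ratio constants whose product is $A_\varepsilon$. By It\^o's formula, since $f\in C^2$ away from $0$,
\[
df(r^i_t)=\Bigl[4f''(r^i_t)\mathbf 1_{\{r^i_t\ge\delta\}}+f'(r^i_t)\langle e^i_t,\widetilde D^i_t-D^i_t\rangle\Bigr]dt+dM^i_t .
\]
Here $\widetilde D^i_t$ is the frozen tamed drift $b_\alpha(\widetilde X^{i,N}_{k\theta})+\frac1{N-1}\sum_{j\ne i}G(\widetilde X^{i,N}_{k\theta},\widetilde X^{j,N}_{k\theta})$ and $D^i_t$ is the exact drift at time $t$.

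We split $\widetilde D^i_t-D^i_t$ into four parts:
\begin{enumerate}
\item[(a)] the dissipative part $b(\widetilde X^{i,N}_t)-b(X^{i,N}_t)+\frac1{N-1}\sum_{j\neq i}\bigl[G(\widetilde X^{i,N}_t,X^{j,N}_t)-G(X^{i,N}_t,X^{j,N}_t)\bigr]$;
\item[(b)] the interaction difference in the second variable, $\frac1{N-1}\sum_{j\ne i}\bigl[G(\widetilde X^{i,N}_t,\widetilde X^{j,N}_t)-G(\widetilde X^{i,N}_t,X^{j,N}_t)\bigr]$;
\item[(c)] the taming error $b_\alpha(\widetilde X^{i,N}_{k\theta})-b(\widetilde X^{i,N}_{k\theta})$;
\item[(d)] the freezing errors $b(\widetilde X^{i,N}_{k\theta})-b(\widetilde X^{i,N}_t)$ and $G(\cdot_{k\theta})-G(\cdot_t)$.
\end{enumerate}

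For (a), average \eqref{dissipativity} over $z=X^{j,N}_t$ to get $\langle e^i,(a)\rangle\le\beta_0(r^i)$. Combined with $4h''+\beta_0h'=-r$ and \eqref{A4-linear}, this gives $\mathcal L_{ref}f(r^i)\le-(1-\varepsilon M)r^i$. For (b), $|(b)|\le\frac{\|\nabla_yG\|_\infty}{N-1}\sum_{j\ne i}r^j$, and summing over $i$ yields at most $(\|h'\|_\infty+\varepsilon)\|\nabla_yG\|_\infty\sum_i r^i$. Together these give the contraction
\[
\frac{d}{dt}\mathbb E\rho\le-\widetilde K_\varepsilon\,\mathbb E\rho+\sum_i\mathbb E\bigl[f'(r^i)\,(|(c)|+|(d)|)\bigr],
\]
using $\sum_i r^i\ge\sum_i f(r^i)/(\|h'\|_\infty+\varepsilon)$.

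Next we bound the error terms using the moment bounds of Lemma~\ref{primul}, which hold uniformly in $N$, $\theta<\widetilde\eta$ and $t$. The taming part satisfies $\mathbb E|(c)|\le2K\theta^\alpha(1+\widetilde C_{2\varkappa+1})$ by \eqref{prop1}; this is $\Gamma_3\theta^\alpha$. For the $b$-part of (d), apply \eqref{A1} and H\"older's inequality with exponents $2/(2-\beta)$ and $2/\beta$. This gives
\[
L\bigl(1+2\widetilde C_{2l/(2-\beta)}^{1-\beta/2}\bigr)\bigl(\mathbb E|\widetilde X^{i,N}_t-\widetilde X^{i,N}_{k\theta}|^2\bigr)^{\beta/2}.
\]
From \eqref{interpolation} and the linear growth bounds \eqref{prop1} and \eqref{Glip},
\[
\mathbb E|\widetilde X^{i,N}_t-\widetilde X^{i,N}_{k\theta}|^2\le\theta\,(2d+16K^2+6K_G^2+(16K^2+12K_G^2)\widetilde C_2),
\]
because $\theta^{2-2\alpha}\le\theta$ for $\alpha\le1/2$; this yields $\Gamma_4\theta^{\beta/2}$. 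The $G$-part of (d) is Lipschitz in both variables, which gives $\Gamma_5\theta^{1/2}$. Each particle thus contributes $\Gamma_3\theta^\alpha+\Gamma_4\theta^{\beta/2}+\Gamma_5\theta^{1/2}$ (the factor $\|h'\|_\infty+\varepsilon$ is already built into the $\Gamma$'s), and summing over the $N$ particles produces the factor $N$.

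By stationarity of $X^N$, Gr\"onwall's inequality gives
\[
\mathbb E\rho_T\le e^{-\widetilde K_\varepsilon T}\mathbb E\rho_0+\frac{N(\Gamma_3\theta^\alpha+\Gamma_4\theta^{\beta/2}+\Gamma_5\theta^{1/2})}{\widetilde K_\varepsilon}.
\]
Choosing the initial coupling optimal for $W_{1,d_{l^1}}(\mu_0^{\otimes N},\pi^N)$ and converting between $\rho$ and $d_{l^1}$ gives \eqref{eq:N-main-bound}. Dividing by $N$ gives \eqref{eq:N-main-bound-normalized}.

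The main obstacle is making the reflection coupling rigorous. The drift of the interpolated process is frozen, so it is not a Markov diffusion in $t$, and the reflection must be cut off near $r=0$ where $e^i$ is undefined. We plan to follow \cite{LWZ21}: run the coupling with the $\delta$-interpolated switch between reflection and synchronous coupling, and control the region $r^i<\delta$ through $f''$ bounds and $\beta_0(r)\to0$. The $\delta$-contributions should be $O(\delta)$ uniformly in $N$ and time, so they vanish as $\delta\downarrow0$.
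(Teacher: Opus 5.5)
Your proposal is correct, but it is organized differently from the paper.

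\textbf{The paper's route.} The paper starts the exact system $X^N$ from the same data as the scheme, $X_0^{i,N}=\widetilde X_0^{i,N}$, and uses the triangle inequality $W_{1,d_{l^1}}(\mu_{\widetilde X^N_T},\pi^N)\le W_{1,d_{l^1}}(\mu_{\widetilde X^N_T},\mu_{X^N_T})+W_{1,d_{l^1}}(\mu_{X^N_T},\pi^N)$. Proposition~\ref{multi} bounds the first term by a reflection coupling of scheme and exact dynamics with zero initial distance, which produces only the discretization term. Lemma~\ref{multi1} bounds the second term by a separate componentwise reflection coupling of the exact system against a stationary copy, which produces only the relaxation term.

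\textbf{Your route.} You run a single coupling of the scheme against the stationary exact system, with an optimal initial coupling, so that one Gr\"onwall estimate yields both terms at once. The constants come out identical, for two reasons. First, the same rate $\widetilde K_\varepsilon$ governs both the exact-versus-exact contraction and the scheme-versus-exact comparison. Second, your remainder terms (c) and (d) involve only $\widetilde X^N$, whose moments (Lemma~\ref{primul}) do not depend on how $X^N$ is initialized. Your decomposition (a)--(d) is the paper's splitting \eqref{esti-r1}--\eqref{esti-r5} up to the order of telescoping.

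\textbf{What each buys.} Your route is more economical for this theorem: one construction, no separate contraction lemma, and a continuous-time Gr\"onwall that handles $T$ off the grid directly. The paper's splitting instead gives the stand-alone, exchangeable, identical-initial-data estimate of Proposition~\ref{multi}. The paper reuses it for the one-particle bound \eqref{multi-marginal} in Theorem~\ref{MVTHM}, and for the finite-cutoff bounds \eqref{eq:finite-cutoff-disc}--\eqref{eq:finite-cutoff-path} used in Corollaries~\ref{coro2} and~\ref{coro:concen}.

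\textbf{Two small points.} Stationarity of $X^N$ is not what drives the Gr\"onwall step; it is only used at the end to identify $\mathcal L(X^N_T)=\pi^N$. Also, $\frac{d}{dt}\mathbb E\rho$ should be read in integrated form on each grid interval, carrying the $O(N\delta)$ cutoff error through the Gr\"onwall bound before letting $\delta\downarrow0$, exactly as the paper does.
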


For later reference, denote the two terms on the right-hand side of \eqref{eq:N-main-bound} by
\begin{align*}
	\mathcal E_{\rm relax}^{N}(T)&:=A_\varepsilon e^{-\widetilde K_\varepsilon T}W_{1,d_{l^1}}(\mu_0^{\otimes N},\pi^N),\\
	\mathcal E_{\rm disc}^{N}(\theta)&:=\sup\limits_{r>0}\frac{r}{h(r)+\varepsilon r}
	\frac{\Gamma_3\theta^\alpha+\Gamma_4\theta^{\frac\beta2}+\Gamma_5\theta^{\frac12}}{\widetilde K_\varepsilon}N.
\end{align*}
\begin{remarks}
The two terms represent the relaxation and discretization errors, respectively; the additional $\theta^{1/2}$-term comes from the temporal discretization of the interaction drift. 

Under the same dimension-scaling assumptions as in Remark~\ref{rem:dim1}, with \(\widetilde C_r\lesssim d^{r/2}\) and dimension-independent model and coupling constants, the discretization bound in Theorem~\ref{mainre2} has a factor of order \(Nd^{\max\left\{l+\beta-\frac12,\frac{\beta+l}{2},\frac12\right\}}\) in the distance \(d_{l^1}\). The additional interaction-discretization term involving \(\Gamma_5\) contributes only a factor \(\sqrt d\) and does not increase this dimension exponent.
\end{remarks}

\begin{remarks}
The finite-time KL bound of \cite[Theorem~1]{MFWB22} concerns ordinary
Euler--Maruyama discretization in the ambient dimension, under a globally
Lipschitz drift with a Lipschitz Jacobian. Its sampling result for the
invariant law additionally assumes a gradient drift and a log-Sobolev
inequality \cite[Corollary~1]{MFWB22}. Formally regarding the entire
$N$-particle system as one diffusion in dimension $Nd$, the published
dimension-dependent bounds do not by themselves provide a discretization
estimate uniform in $N$ for the normalized metric $\bar d_N$; applying the
sampling corollary would also require control of the particle invariant
law's log-Sobolev constant as $N$ varies. Moreover, our polynomially
weighted locally H\"older, possibly superlinear drift need not meet the
global smoothness hypotheses, and the tamed scheme is not the ordinary
Euler scheme covered there. In contrast, \eqref{eq:N-main-bound-normalized}
gives a time-uniform discretization term independent of $N$ under the
present assumptions. This comparison concerns direct applicability of
the published bounds, not what a particle-specific adaptation of their
method might achieve.
\end{remarks}

\begin{thm}\label{thm:IPSmainre2}
	Assume the same assumptions and
	notations as in Theorem \ref{mainre2}. Then, when $0<\theta<\widetilde{\eta},$ $\left\{(\widetilde{X}^{1,N}_{k\theta},\cdots,\widetilde{X}^{N,N}_{k\theta})\right\}_{k\in\mathbb{N}}$ admits a unique invariant distribution denoted by $\pi^{\rm TEM,N}_\theta$ and 
	\begin{bluechange}
	\begin{equation}\label{eq:N-invariant-bound}
		W_{1,d_{l^1}}(\pi_\theta^{\rm TEM,N},\pi^N)\le \mathcal E_{\rm disc}^{N}(\theta).
	\end{equation}
	\end{bluechange}
\end{thm}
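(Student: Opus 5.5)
The plan mirrors the argument for Theorem~\ref{thm:mainre2}, carried out on the product space $(\mathbb R^d)^N$ with the metric $d_{l^1}$. It has three steps: build a contraction for the numerical chain, obtain a unique invariant law from it, and then let Theorem~\ref{mainre2} run to infinite time.

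\textbf{Step 1: contraction of the numerical chain.} Write $P_\theta$ for the one-step transition kernel of the chain \eqref{chazhi}. Take $T=k\theta$ in Theorem~\ref{mainre2}. Its proof compares the interpolated tamed chain \eqref{interpolation} with the exact particle system \eqref{multiparticle}. It uses the componentwise reflection coupling together with the distance $\rho(x,y)=\sum_i\bigl(h(|x^i-y^i|)+\varepsilon|x^i-y^i|\bigr)$. Since $\pi^N$ is invariant for \eqref{multiparticle}, we get, for every initial law $\nu$ with finite $q$-th moments,
\[
W_{1,d_{l^1}}(\nu P_\theta^k,\pi^N)\le A_\varepsilon e^{-\widetilde K_\varepsilon k\theta}W_{1,d_{l^1}}(\nu,\pi^N)+\mathcal E^N_{\rm disc}(\theta).
\]
This alone does not give uniqueness. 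So I would first record a Lyapunov bound. The moment estimates of Lemma~\ref{primul} give $\sup_k\mathbb E|\widetilde X^{1,N}_{k\theta}|^q<\infty$, and I would check that the constant depends on the initial law only through its $q$-th moment, hence applies to any exchangeable initial law, not only $\mu_0^{\otimes N}$. The drift inequality $\mathbb E V(\widetilde X_{(k+1)\theta})\le (1-c\theta)\mathbb E V(\widetilde X_{k\theta})+C\theta$ with $V(x)=\sum_i|x^i|^2$ comes from the same computation. With tightness this makes the Krylov--Bogoliubov argument work: the Cesàro averages $\frac1n\sum_{k<n}\delta_{x_0}P_\theta^k$ are tight. $P_\theta$ is Feller because $b_\alpha$ and $G$ are continuous. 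Hence a limit point $\pi^{\rm TEM,N}_\theta$ is invariant, and by Fatou's lemma it inherits finite $q$-th moments.

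\textbf{Step 2: bias bound and uniqueness.} Apply the displayed estimate with $\nu=\pi^{\rm TEM,N}_\theta$. Then $\nu P_\theta^k=\nu$ for all $k$, and letting $k\to\infty$ kills the relaxation term, since $W_{1,d_{l^1}}(\pi^{\rm TEM,N}_\theta,\pi^N)<\infty$ by the moment bounds. This gives \eqref{eq:N-invariant-bound} directly. To handle initial laws that are not of product form, I need the coupling argument of Theorem~\ref{mainre2} to hold for arbitrary initial couplings. It should: there the initial law enters only through $W_{1,d_{l^1}}(\cdot,\pi^N)$ and through moment constants, and the latter I would bound uniformly over invariant laws via the Lyapunov inequality.

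For uniqueness, the triangle inequality applied to two invariant laws only yields $W\le 2\mathcal E^N_{\rm disc}$, which is not enough. So I would run the reflection coupling directly between two copies of the discrete chain, compared synchronously step by step. A cleaner alternative is minorization plus Lyapunov: the Gaussian increments give each step a density bounded below on compacts, and Harris' theorem, via the drift inequality of Step 1, then yields a unique invariant law. I would use Harris' theorem, since uniqueness only needs qualitative ergodicity and the minorization is immediate from the nondegenerate Gaussian noise $\sqrt{2\theta}\,\xi$.

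\textbf{Main obstacle.} The delicate point is checking that Theorem~\ref{mainre2}'s proof and its moment constants $\widetilde C_r$ remain valid when the initial law is the (a priori non-product) invariant law $\pi^{\rm TEM,N}_\theta$. The constants $\Gamma_3,\Gamma_4,\Gamma_5$ involve $\widetilde C_r$, which is defined for product initial data. I would resolve this by proving the Lyapunov bound $\sup_k\mathbb E|\widetilde X^{1,N}_{k\theta}|^r\le \max\{\mathbb E|\widetilde X^{1,N}_0|^r,\,C\}$ for exchangeable initial laws. I would then observe that the invariant law inherits the stationary bound $C\le\widetilde C_r$. That $\pi^{\rm TEM,N}_\theta$ is exchangeable follows from uniqueness, since the dynamics are permutation symmetric.
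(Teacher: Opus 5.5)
Your argument can be made to work, but it reaches the bias bound by a longer route than the paper. The step you call the ``main obstacle'' never arises there.

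The paper proves the one-step drift inequality $\widetilde P_{\theta,N}V^N\le\widetilde r_1V^N+C$ for $V^N(x)=1+\frac1N\sum_i|x^i|^2$. It then applies Meyn--Tweedie \cite{MT1993}, using Lebesgue-irreducibility, aperiodicity and small compact sets coming from the Gaussian noise. This gives existence, uniqueness and $V^N$-uniform geometric ergodicity at once. Every $d_{l^1}$-$1$-Lipschitz $f$ with $f(0)=0$ satisfies $|f|\le NV^N$, so $W_{1,d_{l^1}}(\mu_0^{\otimes N}\widetilde P_{\theta,N}^k,\pi^{\rm TEM,N}_\theta)\le CN\widetilde\rho^{\,k}\mu_0^{\otimes N}(V^N)$. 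The proof finishes with the triangle inequality through $\mu_0^{\otimes N}\widetilde P_{\theta,N}^k$, Theorem~\ref{mainre2} applied exactly as stated, and $k\to\infty$. Because the initial data are the product law, the $\widetilde C_r$ appearing in $\mathcal E^N_{\rm disc}(\theta)$ are already the right constants. The Harris theorem you invoke for uniqueness gives the same convergence from $\mu_0^{\otimes N}$. With it, your Krylov--Bogoliubov step and the extension of Theorem~\ref{mainre2} to non-product initial laws are unnecessary.

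If you keep the stationary start, your moment comparison is wrong as written. The Lyapunov constant $C$, essentially $(H^{\rm P}_{r,d})^{r/2}$ from Lemma~\ref{primul}, is only an upper bound. By contrast, $\widetilde C_r$ is the actual supremum of the moments of the chain started from $\mu_0^{\otimes N}$, and it can be strictly smaller, so $C\le\widetilde C_r$ need not hold.

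What is true is that the moments of the stationary interpolated chain, at grid and intermediate times, are at most $\widetilde C_r$. This follows from three facts: the $\mu_0^{\otimes N}$-started chain converges weakly to $\pi^{\rm TEM,N}_\theta$, the one-step interpolation kernel is Feller, and $x\mapsto|x^1|^r$ is lower semicontinuous. But that argument again uses exactly the convergence that makes the direct route available.

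Your route does buy something. If you replace $\widetilde C_r$ by the $\mu_0$-independent stationary bounds $(H^{\rm P}_{r,d})^{r/2}$, you get a bias bound intrinsic to $\pi^{\rm TEM,N}_\theta$, free of any dependence on the initial law (on which the invariant law does not depend). That version needs only qualitative uniqueness, not a convergence rate for the numerical chain.
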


\subsection{McKean--Vlasov equation}
When $N \to +\infty$, the law of each particle in the  system \eqref{multiparticle} converges to the law of the following McKean--Vlasov equation:
\begin{equation}\label{mckeanvlasov}
	\begin{cases}
		d\bar{X}_t=\sqrt{2}dB_t+b(\bar{X}_t)dt+ G\circledast\mu_{\bar{X}_t}(\bar{X}_t)dt,\\
		\bar{X}_0\sim\mu_0,
	\end{cases}
\end{equation}
where $G\circledast\mu_{\bar{X}_t}(x)=\int_{\mathbb{R}^d}G(x,y)\mu_{\bar{X}_t}(dy).$
The assumptions of Theorem~\ref{mainre2} imply $m_2<\infty$ and
\[
c_1-\|\nabla_xG\|_\infty-\|\nabla_yG\|_\infty
\ge c_1-\|\nabla_xG\|_\infty-K_G>0.
\]
Consequently, Lemma~\ref{lem:MVmom} yields the time-uniform second-moment bound required below.

\begin{thm}\label{MVTHM}
\begin{bluechange}
	Assume the same assumptions and notations as in Theorem~\ref{mainre2}. Then, for any $N\ge2$, any $\varepsilon>0$ satisfying $1-\varepsilon M-(\|h'\|_\infty+\varepsilon)\|\nabla_yG\|_\infty>0$, any $T>0$, and any $0<\theta<\widetilde\eta$,
	\begin{equation}\label{eq:MV-main-bound}
	\begin{aligned}
		W_1(\mu_{\widetilde X_T^{i,N}},\pi^{MV})
		\leq{}&A_\varepsilon e^{-\widetilde K_\varepsilon T}W_1(\mu_0,\pi^{MV})+\sup\limits_{r>0}\frac{r}{h(r)+\varepsilon r}
		\frac{\Gamma_3\theta^\alpha+\Gamma_4\theta^{\frac\beta2}+\Gamma_5\theta^{\frac12}}{\widetilde K_\varepsilon}\\
		&+\frac{1}{\sqrt{N-1}}\frac{A_\varepsilon}{\widetilde K_\varepsilon}\|\nabla_yG\|_\infty
		\sqrt{\sup\limits_{t\ge0}\mathbb E|\bar X_t|^2}.
	\end{aligned}
	\end{equation}
	The coefficients $A_\varepsilon$, $\widetilde K_\varepsilon$, and $\Gamma_3$--$\Gamma_5$ in \eqref{eq:MV-main-bound} are as specified in Theorem~\ref{mainre2}, while the moment factor is finite by Lemma~\ref{lem:MVmom}.
\end{bluechange}
\end{thm}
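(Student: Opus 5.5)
The plan is to compare the numerical one-particle law with $\pi^{MV}$ through two intermediate laws: the exact particle system \eqref{multiparticle} and the McKean--Vlasov flow \eqref{mckeanvlasov}, both started from the same i.i.d.\ initial data $X_0^i\sim\mu_0$. Write $X_T^{i,N}$ for particle $i$ of \eqref{multiparticle} and $\bar X_T$ for the solution of \eqref{mckeanvlasov}. The triangle inequality gives
\[
W_1(\mu_{\widetilde X_T^{i,N}},\pi^{MV})\le W_1(\mu_{\widetilde X_T^{i,N}},\mu_{X_T^{i,N}})+W_1(\mu_{X_T^{i,N}},\mu_{\bar X_T})+W_1(\mu_{\bar X_T},\pi^{MV}).
\]
I would then bound the three terms so that they produce, in order, the discretization, finite-particle and relaxation terms of \eqref{eq:MV-main-bound}.

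\emph{Relaxation term.} By Remark~\ref{rem:LWZ-general-drift}, the contraction estimate \cite[Corollary~2.9]{LWZ21} applies with $\|\nabla^2_{xy}W\|_\infty$ replaced by $\|\nabla_yG\|_\infty$. Since $\pi^{MV}$ is invariant, this gives
\[
W_1(\mu_{\bar X_T},\pi^{MV})\le A_\varepsilon e^{-\widetilde K_\varepsilon T}W_1(\mu_0,\pi^{MV}).
\]
If the constants there are not literally in this form, I would rederive them. I would apply the distance $f_\varepsilon(r)=h(r)+\varepsilon r$ to the componentwise reflection coupling of two McKean--Vlasov copies. Using \eqref{h} together with the Lipschitz-in-law bound of Remark~\ref{rem:LWZ-general-drift} gives $\frac{d}{dt}\mathbb E f_\varepsilon(r_t)\le -\widetilde K_\varepsilon\,\mathbb E f_\varepsilon(r_t)$. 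Comparing $f_\varepsilon$ with $r$ then yields the factor $A_\varepsilon$.

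\emph{Propagation of chaos.} I would couple each $X^{i,N}$ with an i.i.d.\ copy $\bar X^i$ of \eqref{mckeanvlasov}, as in \cite[Theorem~2.11(b)]{LWZ21}, driven by the same Brownian motions with reflection coupling. The only non-contractive error is the law-of-large-numbers term
\[
\frac1{N-1}\sum_{j\ne i}G(x,\bar X_t^j)-G\circledast\mu_{\bar X_t}(x).
\]
Conditioning on $\bar X^i$, the summands are independent, centered and have variance at most $\|\nabla_yG\|_\infty^2\,\mathbb E|\bar X_t|^2$. Hence the expected norm of this term is at most $(N-1)^{-1/2}\|\nabla_yG\|_\infty\sqrt{\sup_t\mathbb E|\bar X_t|^2}$, which is finite by Lemma~\ref{lem:MVmom}. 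Feeding this into the same $f_\varepsilon$-differential inequality and averaging over $i$ gives
\[
\frac{d}{dt}\mathbb E f_\varepsilon\le -\widetilde K_\varepsilon\,\mathbb E f_\varepsilon+(\text{error})\cdot(\|h'\|_\infty+\varepsilon).
\]
The interaction terms $\|\nabla_yG\|_\infty\,\frac1N\sum_j|X^{j,N}-\bar X^j|$ are absorbed into $\widetilde K_\varepsilon$ by exchangeability. Grönwall's lemma, with zero initial distance, then gives the stated $(N-1)^{-1/2}A_\varepsilon\widetilde K_\varepsilon^{-1}\|\nabla_yG\|_\infty(\cdot)^{1/2}$ bound.

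\emph{Discretization term.} By exchangeability of both the numerical and the exact particle systems, any coupling of the joint laws gives
\[
W_1(\mu_{\widetilde X_T^{i,N}},\mu_{X_T^{i,N}})\le \frac1N W_{1,d_{l^1}}(\mu_{\widetilde X_T^N},\mu_{X_T^N}).
\]
Here I would invoke the coupling argument behind Theorem~\ref{mainre2}, started not from $(\mu_0^{\otimes N},\pi^N)$ but from the same initial vector for both systems. The relaxation contribution then vanishes, leaving exactly $\mathcal E^N_{\rm disc}(\theta)/N$, which is the $N$-uniform term in \eqref{eq:N-main-bound-normalized}. This step is the main obstacle. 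I must check that the proof of Theorem~\ref{mainre2} is really a coupling-by-coupling estimate, namely
\[
\frac{d}{dt}\mathbb E\sum_i f_\varepsilon(|\widetilde X^{i}_t-X^i_t|)\le -\widetilde K_\varepsilon(\cdots)+N(\Gamma_3\theta^\alpha+\Gamma_4\theta^{\beta/2}+\Gamma_5\theta^{1/2}),
\]
valid for arbitrary coupled initial data, and not one that uses stationarity of $\pi^N$. If it does use stationarity, I would redo that estimate with synchronous initial data. This should go through because the moment constants $\widetilde C_r$ of Lemma~\ref{primul} control the numerical chain from $\mu_0^{\otimes N}$ uniformly in time. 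Summing the three bounds then yields \eqref{eq:MV-main-bound}.
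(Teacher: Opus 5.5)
Your proposal is correct and is essentially the paper's proof. It uses the same three-term triangle inequality and takes the relaxation and finite-particle terms from \cite[Corollary~2.9]{LWZ21} and \cite[Theorem~2.11(b)]{LWZ21}, i.e.\ Lemmas~\ref{lem:MV-contraction} and~\ref{lem:POC}. The discretization term comes from the identical-initial-data coupling of Proposition~\ref{multi}, which never uses stationarity of $\pi^N$, so your worry there is moot. Your reduction $W_1(\mu_{\widetilde X_T^{i,N}},\mu_{X_T^{i,N}})\le N^{-1}W_{1,d_{l^1}}(\mu_{\widetilde X_T^N},\mu_{X_T^N})$ via equal marginals is an equivalent substitute for the paper's appeal to exchangeability of the coupling itself.

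One caveat concerns your self-contained Gr\"onwall sketch for the propagation-of-chaos term. The error there enters with the multiplier $h'(r_t^i)+\varepsilon$, so the sketch yields the factor $\sup_{r>0}\frac{r}{h(r)+\varepsilon r}(\|h'\|_\infty+\varepsilon)$ rather than $A_\varepsilon$. This factor dominates $A_\varepsilon$ (since $h(r)\le\|h'\|_\infty r$) and can be strictly larger. The exact constant in \eqref{eq:MV-main-bound} therefore rests on the cited estimate, as in the paper.
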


\begin{bluechange}
For later reference, denote the three terms on the right-hand side of \eqref{eq:MV-main-bound} by
\begin{align*}
	\mathcal E_{\rm relax}^{\rm MV}(T)&:=A_\varepsilon e^{-\widetilde K_\varepsilon T}W_1(\mu_0,\pi^{MV}),\\
	\mathcal E_{\rm disc}^{\rm MV}(\theta)&:=\sup\limits_{r>0}\frac{r}{h(r)+\varepsilon r}
	\frac{\Gamma_3\theta^\alpha+\Gamma_4\theta^{\frac\beta2}+\Gamma_5\theta^{\frac12}}{\widetilde K_\varepsilon},\\
	\mathcal E_{\rm part}^{\rm MV}(N)&:=\frac{1}{\sqrt{N-1}}\frac{A_\varepsilon}{\widetilde K_\varepsilon}\|\nabla_yG\|_\infty
	\sqrt{\sup\limits_{t\ge0}\mathbb E|\bar X_t|^2}.
\end{align*}
The three terms represent the relaxation, discretization, and finite-particle errors, respectively.
\end{bluechange}
\begin{corollary}\label{coro0}
	Assume the same assumptions and
	notations as in Theorem \ref{MVTHM}. For $N\ge2,$ let $\pi_\theta^{\rm TEM,N,1}$  denote the one-particle marginal distribution of  $\pi_\theta^{\rm TEM,N}.$ Then,
	\begin{bluechange}
	\begin{equation}\label{eq:MV-invariant-marginal-bound}
		W_1(\pi_\theta^{\rm TEM,N,1},\pi^{MV})\le\mathcal E_{\rm disc}^{\rm MV}(\theta)+\mathcal E_{\rm part}^{\rm MV}(N).
	\end{equation}
	\end{bluechange}
\end{corollary}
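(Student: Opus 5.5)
The idea is to start the numerical particle chain in its own invariant law and then let $T\to\infty$ in Theorem~\ref{MVTHM}. The relaxation term then disappears, and the discretization and finite-particle terms remain.

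Theorem~\ref{MVTHM} assumes i.i.d.\ initial data with law $\mu_0$, whereas $\pi_\theta^{\rm TEM,N}$ is not a product measure. So I will not apply the theorem verbatim. I will instead apply the chain of estimates behind it, in the form of a triangle inequality that holds for general exchangeable initial laws. Fix $i=1$ and write
\[
W_1(\pi_\theta^{\rm TEM,N,1},\pi^{MV})\le W_1(\pi_\theta^{\rm TEM,N,1},\pi^{N,1})+W_1(\pi^{N,1},\pi^{MV}),
\]
where $\pi^{N,1}$ is the one-particle marginal of $\pi^N$.

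\emph{First term.} By exchangeability of both $\pi_\theta^{\rm TEM,N}$ and $\pi^N$, which follows from the symmetry of the dynamics and uniqueness of the invariant laws, any coupling of the joint laws gives
\[
W_1(\pi_\theta^{\rm TEM,N,1},\pi^{N,1})\le \tfrac1N W_{1,d_{l^1}}(\pi_\theta^{\rm TEM,N},\pi^N).
\]
To justify this, take an optimal coupling and symmetrize it over permutations; each coordinate then carries the same expected distance, so the marginal bound is the normalized joint distance. Theorem~\ref{thm:IPSmainre2} bounds the right-hand side by $\mathcal E^N_{\rm disc}(\theta)/N$, which equals $\mathcal E^{\rm MV}_{\rm disc}(\theta)$.

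\emph{Second term.} Start the particle system \eqref{multiparticle} from $\pi^{MV\otimes N}$ and the McKean--Vlasov equation from $\pi^{MV}$, which is stationary. Uniform-in-time propagation of chaos, as used in the proof of Theorem~\ref{MVTHM} via \cite[Theorem~2.11(b)]{LWZ21} with $\|\nabla_yG\|_\infty$ in place of $\|\nabla^2_{xy}W\|_\infty$, gives for all $t$
\[
W_1(\mathcal L(X^{1,N}_t),\pi^{MV})\le \frac{A_\varepsilon}{\widetilde K_\varepsilon}\frac{\|\nabla_yG\|_\infty}{\sqrt{N-1}}\sqrt{\sup_s\mathbb E|\bar X_s|^2},
\]
with the moment taken along the stationary solution. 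The contraction of the particle system (Corollary~2.9 of \cite{LWZ21}) gives $\mathcal L(X^{N}_t)\to\pi^N$ in $W_{1,d_{l^1}}$ as $t\to\infty$, so the marginals converge as well. Letting $t\to\infty$ yields
\[
W_1(\pi^{N,1},\pi^{MV})\le \mathcal E^{\rm MV}_{\rm part}(N).
\]

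The main obstacle is the moment factor. In Theorem~\ref{MVTHM}, $\sup_t\mathbb E|\bar X_t|^2$ refers to the solution started from $\mu_0$. With a stationary start I obtain $\int|x|^2\pi^{MV}(dx)$ instead. I will bound this by the $\mu_0$-started supremum: Lemma~\ref{lem:MVmom} gives uniform moments, and the limiting law $\pi^{MV}$ has second moment at most $\liminf_t\mathbb E|\bar X_t|^2$ by Fatou and weak convergence. Hence the stationary moment is dominated by the supremum in $\mathcal E^{\rm MV}_{\rm part}$.

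An alternative that avoids the first-term symmetrization is to apply Theorem~\ref{MVTHM} directly from $\mu_0^{\otimes N}$ and let $T\to\infty$. This requires $\mu_{\widetilde X^{1,N}_{k\theta}}\to\pi_\theta^{\rm TEM,N,1}$ in $W_1$. That convergence follows from the geometric contraction underlying Theorem~\ref{thm:IPSmainre2}, together with uniform moments (Lemma~\ref{primul}) for uniform integrability. Taking $T=k\theta\to\infty$ then kills $\mathcal E^{\rm MV}_{\rm relax}(T)$ and gives \eqref{eq:MV-invariant-marginal-bound} immediately. I would present this simpler route, citing the convergence to the invariant law established in the proof of Theorem~\ref{thm:IPSmainre2}, and use lower semicontinuity of $W_1$ under weak convergence to pass to the limit.
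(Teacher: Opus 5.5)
The route you say you would actually present (apply Theorem~\ref{MVTHM} from $\mu_0^{\otimes N}$ at $T=k\theta$ and let $k\to\infty$) is exactly the paper's proof, which passes to the limit using the triangle inequality, the marginal bound $W_1(\mu^1,\nu^1)\le W_{1,d_{l^1}}(\mu,\nu)$, and the geometric estimate \eqref{eq:coro0}, so no separate uniform-integrability or lower-semicontinuity step is needed. Your first route through $\pi^N$ (Theorem~\ref{thm:IPSmainre2} divided by $N$ via exchangeability, plus a stationary propagation-of-chaos bound) is also correct and additionally yields $W_1(\pi^{N,1},\pi^{MV})\le\mathcal E^{\rm MV}_{\rm part}(N)$; the Fatou step for the stationary moment can be avoided by starting from $\mu_0^{\otimes N}$ and letting $t\to\infty$ in Lemmas~\ref{multi1}, \ref{lem:POC}, and~\ref{lem:MV-contraction}.
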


\begin{corollary}\label{coro1}
Under the assumptions and notations of Theorem~\ref{MVTHM}, let
$T=k\theta$ with $k\in\mathbb N^+$ and let $\mathbb{E}[\mu_{N,k}]$ denotes the expected
	time-averaged empirical measure, i.e., the deterministic measure defined by
	\begin{align*}
		\mathbb{E}[\mu_{N,k}]=\frac{1}{kN}\sum_{i=1}^N\sum_{j=0}^{k-1}\mathbb{E}[\delta_{\widetilde{X}^{i,N}_{j\theta}}]
	\end{align*}. Then
\begin{align}\label{eq:coro1}
	W_1\bigl(\mathbb E[\mu_{N,k}],\pi^{MV}\bigr)
	\leq \mathcal E_{\rm disc}^{\rm MV}(\theta)
	+\frac{C_{\rm poc}}{\sqrt{N-1}}
	+\frac{C_{\rm erg}}{T}
	+\frac{A_\varepsilon W_1(\mu_0,\pi^{MV})\theta}{T},
\end{align}
where
\begin{align*}
    C_{\rm poc}:=&\frac{A_\varepsilon}{\widetilde{K}_\varepsilon}\|\nabla_{y}G\|_\infty \sup\limits_{t\ge0}\sqrt{\mathbb{E}|\bar{X}_t|^2},\\
    C_{\rm erg}:=&\frac{A_\varepsilon}{\widetilde{K}_\varepsilon}W_1(\pi^{MV},\mu_0).
\end{align*}
\end{corollary}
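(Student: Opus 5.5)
The plan is to reduce the bias of the expected time-averaged empirical measure to the per-time bounds of Theorem~\ref{MVTHM}, using the convexity of $W_1$ and the exchangeability of the particles. First, $\mathbb E[\mu_{N,k}]$ is a mixture of the laws $\mu_{\widetilde X^{i,N}_{j\theta}}$. Joint convexity of $W_1$, obtained by mixing optimal couplings, then gives
\[
W_1\bigl(\mathbb E[\mu_{N,k}],\pi^{MV}\bigr)\le \frac{1}{kN}\sum_{i=1}^N\sum_{j=0}^{k-1}W_1\bigl(\mu_{\widetilde X^{i,N}_{j\theta}},\pi^{MV}\bigr).
\]
The initial data are i.i.d. and the scheme \eqref{chazhi} is symmetric under permutations of the indices. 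Hence the law of $\widetilde X^{i,N}_{j\theta}$ does not depend on $i$, and the inner average over $i$ collapses to a single term.

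Next, I apply \eqref{eq:MV-main-bound} at each time $T=j\theta$ with $j\ge1$. The $j=0$ term equals $W_1(\mu_0,\pi^{MV})$, which is bounded by $A_\varepsilon W_1(\mu_0,\pi^{MV})$ since $A_\varepsilon\ge1$. Thus every term, including $j=0$, is bounded by
\[
A_\varepsilon e^{-\widetilde K_\varepsilon j\theta}W_1(\mu_0,\pi^{MV})+\mathcal E^{\rm MV}_{\rm disc}(\theta)+\mathcal E^{\rm MV}_{\rm part}(N).
\]
Averaging over $j$ leaves the discretization and particle terms unchanged, and the latter equals $C_{\rm poc}/\sqrt{N-1}$. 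The remaining relaxation contribution is $\frac{A_\varepsilon W_1(\mu_0,\pi^{MV})}{k}\sum_{j=0}^{k-1}e^{-\widetilde K_\varepsilon j\theta}$.

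The only real step is to bound this geometric sum so that it produces exactly $C_{\rm erg}/T+A_\varepsilon W_1(\mu_0,\pi^{MV})\theta/T$. I would separate the $j=0$ term, which gives $A_\varepsilon W_1\cdot\frac1k=A_\varepsilon W_1\,\theta/T$. For $j\ge1$ I would use monotonicity of the exponential, $e^{-\widetilde K_\varepsilon j\theta}\le\frac1\theta\int_{(j-1)\theta}^{j\theta}e^{-\widetilde K_\varepsilon s}ds$, so that
\[
\sum_{j=1}^{k-1}e^{-\widetilde K_\varepsilon j\theta}\le \frac{1}{\theta\widetilde K_\varepsilon}.
\]
Multiplying by $A_\varepsilon W_1/k=A_\varepsilon W_1\,\theta/T$ gives $\frac{A_\varepsilon W_1(\mu_0,\pi^{MV})}{\widetilde K_\varepsilon T}=C_{\rm erg}/T$. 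Summing all contributions yields \eqref{eq:coro1}.

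I expect no genuine obstacle. The points to verify are joint convexity of $W_1$ under finite mixtures, the exchangeability argument for $i$-independence of the marginal laws, and the bookkeeping that keeps the $j=0$ term separate so the constant matches the stated $\theta/T$ term.
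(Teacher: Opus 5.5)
Your proposal is correct and follows the paper's proof essentially step for step: convexity of $W_1$ together with exchangeability, Theorem~\ref{MVTHM} for $j\ge1$, and $A_\varepsilon\ge1$ for the $j=0$ term. The only cosmetic difference is how you bound the geometric sum. You split off $j=0$ and compare the rest with $\int_0^\infty e^{-\widetilde K_\varepsilon s}\,ds$, whereas the paper evaluates the sum in closed form and uses $x/(1-e^{-x})\le 1+x$; both yield exactly the stated constants $C_{\rm erg}/T$ and $A_\varepsilon W_1(\mu_0,\pi^{MV})\theta/T$.
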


\begin{corollary}\label{coro2}
	Assume the assumptions and notations of Theorem~\ref{MVTHM}.
	\begin{bluechange}
	If $d\in\{1,2\}$, assume additionally that $m_4<\infty$. Then for any $T>0$
	\begin{equation}\label{W1}
		\mathbb E W_1\Big(\frac{1}{N}\sum_{i=1}^N\delta_{\widetilde X_T^{i,N}},\pi^{MV}\Big)
		\leq \mathcal E_{\rm relax}^{\rm MV}(T)+\mathcal E_{\rm disc}^{\rm MV}(\theta)
		+\mathcal E_{\rm part}^{\rm MV}(N)+\mathcal E_{\rm emp}^{\rm MV}(N).
	\end{equation}
	Here,
	\begin{align*}
		\mathcal E_{\rm emp}^{\rm MV}(N):=C_d
		\begin{cases}
			\sup\limits_{t\geq0}(\mathbb E|\bar X_t|^4)^{\frac14}(N^{-\frac12}+N^{-\frac34}), &d=1,\\
			\sup\limits_{t\geq0}(\mathbb E|\bar X_t|^4)^{\frac14}(N^{-\frac12}\ln(1+N)+N^{-\frac34}), &d=2,\\
			\sup\limits_{t\geq0}(\mathbb E|\bar X_t|^2)^{\frac12}(N^{-\frac1d}+N^{-\frac12}), &d\geq3,
		\end{cases}
	\end{align*}
	where $C_d>0$ depends only on the dimension and is independent of $N$, $T$, and $\theta$.
	\end{bluechange}
\end{corollary}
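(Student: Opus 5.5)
We bound the expected empirical error at time $T$ by the triangle inequality through two intermediate empirical measures. Let $Y^{i,N}_t$ be $N$ i.i.d. copies of the McKean--Vlasov process \eqref{mckeanvlasov}, driven by the same Brownian motions $B^i$ as the particle scheme. Let $\nu^N_T:=\frac1N\sum_i\delta_{Y^{i,N}_T}$ and $\widetilde\mu^N_T:=\frac1N\sum_i\delta_{\widetilde X^{i,N}_T}$. Then
\[
\mathbb E W_1(\widetilde\mu^N_T,\pi^{MV})\le \mathbb E W_1(\widetilde\mu^N_T,\nu^N_T)+\mathbb E W_1(\nu^N_T,\mu_{\bar X_T})+W_1(\mu_{\bar X_T},\pi^{MV}).
\]
For the first term we use $\mathbb E W_1(\widetilde\mu^N_T,\nu^N_T)\le \frac1N\mathbb E\sum_i|\widetilde X^{i,N}_T-Y^{i,N}_T|$. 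Taking the infimum over couplings, we aim to bound this by the normalized distance $W_{1,\bar d_N}(\mu_{\widetilde X^N_T},\mu_{\bar X_T}^{\otimes N})$, at the price of constructing a coupling rather than a synchronous one. More precisely, we use exchangeability together with the fact that $W_1$ between empirical measures is a jointly convex, symmetric function of the configurations. Then for any coupling $P$ of the two $N$-particle laws, $\mathbb E_P W_1(\text{emp},\text{emp})\le \mathbb E_P\bar d_N$. So it suffices to bound $W_{1,\bar d_N}(\mu_{\widetilde X_T^N},\mu_{\bar X_T}^{\otimes N})$.

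The main step is to re-run the proof of Theorem~\ref{MVTHM} at the level of the whole $N$-particle vector rather than one marginal. I expect that proof to split the error into three parts: the discretization part, from Theorem~\ref{mainre2} in the normalized metric \eqref{eq:N-main-bound-normalized}; the uniform-in-time propagation of chaos from \cite[Theorem~2.11(b)]{LWZ21}, as in Remark~\ref{rem:LWZ-general-drift}; and the contraction of the limit equation \cite[Corollary~2.9]{LWZ21}. Each of these ingredients is naturally a joint $\bar d_N$ estimate: the componentwise reflection coupling of the particle system with $N$ independent nonlinear processes controls $\frac1N\sum_i\mathbb E|X^{i,N}_t-\bar X^i_t|$. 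We would therefore derive
\[
W_{1,\bar d_N}(\mu_{\widetilde X_T^N},\mu_{\bar X_T}^{\otimes N})\le \mathcal E^{\rm MV}_{\rm disc}(\theta)+\mathcal E^{\rm MV}_{\rm part}(N)+(\text{relaxation between }\mu_0^{\otimes N}\text{ and the chain}).
\]
The delicate point, and what we expect to be the main obstacle, is bookkeeping the relaxation term so that it reduces to exactly $\mathcal E^{\rm MV}_{\rm relax}(T)$ without double counting. To handle it, we compare directly with $\pi^{MV}$ rather than with $\mu_{\bar X_T}$. That is, we start the independent McKean--Vlasov copies in $\pi^{MV}$, so that $Y^{i,N}_T\sim\pi^{MV}$ for all $T$ and the third term above disappears. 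The initial mismatch $W_1(\mu_0,\pi^{MV})$ is then contracted at rate $\widetilde K_\varepsilon$ inside the coupling, with the constant $A_\varepsilon$, exactly as in \eqref{eq:MV-main-bound}. This leaves two contributions, $\mathcal E^{\rm MV}_{\rm relax}(T)+\mathcal E^{\rm MV}_{\rm disc}(\theta)+\mathcal E^{\rm MV}_{\rm part}(N)$ plus the i.i.d. sampling term $\mathbb E W_1(\nu^N_T,\pi^{MV})$.

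The remaining term is the empirical-measure error for $N$ i.i.d. samples from $\pi^{MV}$, and here we invoke the Fournier--Guillin rates for $\mathbb E W_1$ of an i.i.d. empirical measure. For $d\ge3$ a second moment suffices and gives order $N^{-1/d}+N^{-1/2}$. For $d=1,2$ a $q$-th moment with $q>2$ is needed; we take $q=4$, which yields $N^{-1/2}+N^{-3/4}$ in $d=1$ and $N^{-1/2}\ln(1+N)+N^{-3/4}$ in $d=2$. The moment of $\pi^{MV}$ is controlled by $\sup_t\mathbb E|\bar X_t|^p$, using Lemma~\ref{lem:MVmom} with $p=2$, or with $p=4$ under $m_4<\infty$, together with lower semicontinuity as $t\to\infty$. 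Scaling by the $p$-th moment to the power $1/p$ gives $\mathcal E^{\rm MV}_{\rm emp}(N)$, with $C_d$ depending only on $d$.
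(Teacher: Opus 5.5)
Your final argument is correct and yields exactly \eqref{W1}, but the detour to stationary copies is unnecessary, and the step you flag as the main obstacle is not one. The paper essentially uses your first decomposition, with the first term split further through the empirical measure of the particle system $X^N_T$. With $\bar X^i$ i.i.d.\ McKean--Vlasov copies started from the same $X_0^i$:
\begin{enumerate}
\item it bounds scheme versus particle system by $\mathcal E^{\rm MV}_{\rm disc}(\theta)$, via \eqref{eq:finite-cutoff-disc} and $\delta\to0$;
\item it bounds particle system versus copies by $\mathcal E^{\rm MV}_{\rm part}(N)$, via the joint coupling of \cite[Proposition~4.3]{LWZ21};
\item it applies \cite[Theorem~1]{FG} to the i.i.d.\ sample from $\mu_{\bar X_T}$, with the uniform moments of Lemma~\ref{lem:MVmom};
\item it pays $W_1(\mu_{\bar X_T},\pi^{MV})\le\mathcal E^{\rm MV}_{\rm relax}(T)$ once, deterministically.
\end{enumerate}
The ``relaxation between $\mu_0^{\otimes N}$ and the chain'' in your second paragraph is spurious. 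The right discretization ingredient is Proposition~\ref{multi} (scheme versus particle system, identical initial data), not Theorem~\ref{mainre2} (scheme versus $\pi^N$). The propagation-of-chaos coupling also starts from identical data. So relaxation enters only through your third term, and no double counting can occur.

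Your stationary variant also works, but you should not justify its key estimate by contracting the initial mismatch ``inside the coupling''. That estimate is
$W_{1,\bar d_N}\bigl(\mu_{\widetilde X^N_T},(\pi^{MV})^{\otimes N}\bigr)\le\mathcal E^{\rm MV}_{\rm relax}(T)+\mathcal E^{\rm MV}_{\rm disc}(\theta)+\mathcal E^{\rm MV}_{\rm part}(N)$.
A direct reflection coupling of the $\mu_0^{\otimes N}$-started system with stationary copies is a new computation, and you would have to redo its constants. Instead, the estimate follows in one line from the triangle inequality for $W_{1,\bar d_N}$ through $\mu_{X^N_T}$ and $\mu_{\bar X_T}^{\otimes N}$. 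Combine this with $W_{1,\bar d_N}(\mu^{\otimes N},\nu^{\otimes N})\le W_1(\mu,\nu)$ (take the product of optimal couplings) and Lemma~\ref{lem:MV-contraction}.

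With that filled in, your route has two small advantages. It uses a single optimal coupling at time $T$ in place of two dynamic couplings, so there is no cutoff limit and no gluing. It also applies Fournier--Guillin to the fixed law $\pi^{MV}$, whose moments are bounded by $\sup_t\mathbb E|\bar X_t|^q$ via lower semicontinuity as $\mu_{\bar X_t}\to\pi^{MV}$. The paper's route avoids that limiting step by working with $\mu_{\bar X_T}$ directly.

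The appeal to exchangeability and joint convexity is unnecessary: the diagonal matching already gives $W_1\bigl(\frac1N\sum_i\delta_{x^i},\frac1N\sum_i\delta_{y^i}\bigr)\le\bar d_N(x,y)$.
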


Assume that the initial distribution $\mu_0$ satisfies a Gaussian concentration inequality: there exists a constant $c_G(\mu_0)$ such that for every Lipschitz function $f$,
\begin{equation}\label{eq:concercg}
    \mu_0(e^{f-\mu_0(f)})\le e^{\frac{c_G(\mu_0)}{2}\|f\|^2_{\rm Lip}}.
\end{equation}
Then, the following concentration inequality can be obtained.
\begin{corollary}\label{coro:concen}
    Under the hypotheses of Theorem~\ref{MVTHM}, and assuming the Gaussian concentration inequality \eqref{eq:concercg}, for every $1$-Lipschitz function $f$, every $a>0$, $T=k\theta$ with $k\ge1$ an integer, and $N\ge2$,
    \begin{equation}\label{eq:MV-concentration-bound}
    \begin{aligned}
        &\mathbb{P}\left(\frac{1}{kN}\sum_{i=1}^N\sum_{j=0}^{k-1}f(\widetilde{X}_{j\theta}^{i,N})-\pi^{MV}(f)>a+4\big(\frac{C_{\rm poc}}{\sqrt{N-1}}+\frac{C_{\rm erg}}{T}\big)\right)\\
        \le{}&\frac{4}{a+4\big(\frac{C_{\rm poc}}{\sqrt{N-1}}+\frac{C_{\rm erg}}{T}\big)}(\mathcal E_{\rm disc}^{\rm MV}(\theta)+C_{\rm mom}\sqrt{\theta})\\
		&+\exp\left(-\frac{\big(a+4(\frac{C_{\rm poc}}{\sqrt{N-1}}+\frac{C_{\rm erg}}{T})\big)^2(1-\|\nabla_yG\|_\infty\|h'\|_\infty)^2}{32(h'(0))^2(2+c_G(\mu_0)/T)}NT\right),
    \end{aligned}
    \end{equation}
    where $C_{\rm poc},C_{\rm erg}$ are given in Corollary \ref{coro1} and 
    $$C_{\rm mom}:=(2d+16K^2+6K_G^2 +16K^2\widetilde{C}_2+12K_G^2\widetilde{C}_2)^{\frac12}.$$

\end{corollary}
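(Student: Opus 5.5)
The plan is to compare the discrete empirical time average with its continuous-time analogue for the exact particle system \eqref{multiparticle}, and then split the deviation into a discretization part, a fluctuation part and a bias part. Set
\[
u:=a+4\Big(\frac{C_{\rm poc}}{\sqrt{N-1}}+\frac{C_{\rm erg}}{T}\Big),\qquad
S:=\frac{1}{kN}\sum_{i=1}^N\sum_{j=0}^{k-1}f(\widetilde X^{i,N}_{j\theta}),\qquad
S^c:=\frac{1}{NT}\int_0^T\sum_{i=1}^N f(X^{i,N}_t)\,dt .
\]
Here $X^N$ solves \eqref{multiparticle} with the same initial data $(X_0^i)$ and the same Brownian motions, coupled to $\widetilde X^N$ exactly as in the proof of Theorem~\ref{mainre2}. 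We write
\[
S-\pi^{MV}(f)=(S-S^c)+(S^c-\mathbb E S^c)+(\mathbb E S^c-\pi^{MV}(f)).
\]
Since $u\ge 4(C_{\rm poc}/\sqrt{N-1}+C_{\rm erg}/T)$, it suffices to show that the last term is at most $u/2$. A union bound over the events $\{S-S^c>u/4\}$ and $\{S^c-\mathbb E S^c>u/4\}$ then yields \eqref{eq:MV-concentration-bound}.

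\emph{Bias term.} Since $f$ is $1$-Lipschitz and the particles are exchangeable, $|\mathbb E S^c-\pi^{MV}(f)|\le T^{-1}\int_0^T W_1(\mu_{X^{1,N}_t},\pi^{MV})\,dt$. By \cite[Corollary~2.9]{LWZ21} and \cite[Theorem~2.11(b)]{LWZ21}, as adapted in Remark~\ref{rem:LWZ-general-drift}, the integrand is at most $A_\varepsilon e^{-\widetilde K_\varepsilon t}W_1(\mu_0,\pi^{MV})+C_{\rm poc}/\sqrt{N-1}$. Integrating in $t$ gives the bound $C_{\rm poc}/\sqrt{N-1}+C_{\rm erg}/T$, which is at most $u/4$.

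\emph{Discretization term.} By Markov's inequality, $\mathbb P(S-S^c>u/4)\le 4u^{-1}\mathbb E|S-S^c|$. Split
\[
\mathbb E|S-S^c|\le \frac1{kN}\sum_{j}\mathbb E\, d_{l^1}\bigl(\widetilde X^N_{j\theta},X^N_{j\theta}\bigr)+\frac{1}{NT}\sum_j\int_{j\theta}^{(j+1)\theta}\mathbb E\, d_{l^1}\bigl(X^N_t,X^N_{j\theta}\bigr)\,dt .
\]
The first piece is controlled by the coupling estimate underlying Theorem~\ref{mainre2}, which is uniform in time and equals $N\,\mathcal E^{\rm MV}_{\rm disc}(\theta)$ before normalization; this contributes $\mathcal E_{\rm disc}^{\rm MV}(\theta)$. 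For the second piece, the one-step increment bound from the appendix moment lemmas controls $\mathbb E|X^{i,N}_t-X^{i,N}_{j\theta}|^2$ by $\theta\,(2d+16K^2+6K_G^2+(16K^2+12K_G^2)\widetilde C_2)$ for $\theta\le 1$. The same computation that produced $\Gamma_5$ applies, and it yields $C_{\rm mom}\sqrt\theta$. If the pathwise coupling from Theorem~\ref{mainre2} only controls the distance through $h$, I would use $r\le \sup_{r}\frac{r}{h(r)+\varepsilon r}\,(h(r)+\varepsilon r)$, exactly as in that theorem.

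\emph{Fluctuation term (main obstacle).} I would follow the martingale strategy of \cite[Theorem~2.5]{LWZ21}. Define $F(x):=\frac1N\sum_i f(x^i)$ and $U(s,x):=\mathbb E_x\int_0^{T-s}F(X^N_r)\,dr$. The key step is a Lipschitz bound for $U$ in each coordinate. Using the componentwise reflection coupling and the contraction of $\sum_i h(|X^i-Y^i|)$ at rate governed by $1-\|\nabla_yG\|_\infty\|h'\|_\infty$, together with $r\le h(r)/\inf h'$ and $h\le h'(0)r$ (which holds because $h'$ is maximal at $0$), I expect
\[
|\nabla_{x^i}U|\le \frac{h'(0)}{N(1-\|\nabla_yG\|_\infty\|h'\|_\infty)}.
\]
Verifying this constant is the delicate point, since $h$ is only equivalent to $r$ and the monotonicity of $h'$ must be checked from \eqref{h'}. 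Given this bound, Itô's formula gives $\int_0^TF(X_t)\,dt-U(0,X_0)=\sqrt2\sum_i\int_0^T\nabla_{x^i}U\cdot dB^i_t$. The exponential martingale bound controls this with variance proxy $2T\cdot N\cdot\bigl(\tfrac{h'(0)}{N(1-\cdot)}\bigr)^2$. The initial fluctuation $U(0,X_0)-\mathbb EU(0,X_0)$ is handled by tensorizing \eqref{eq:concercg} over the $N$ i.i.d. coordinates, which gives variance proxy $c_G(\mu_0)N\,\bigl(\tfrac{h'(0)}{N(1-\cdot)}\bigr)^2$. Combining the two via the Chernoff bound at level $T\cdot u/4$ and optimizing gives
\[
\exp\!\left(-\frac{u^2(1-\|\nabla_yG\|_\infty\|h'\|_\infty)^2}{32\,h'(0)^2(2+c_G(\mu_0)/T)}\,NT\right),
\]
which is the claimed exponential term.
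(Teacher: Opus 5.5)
Your skeleton matches the paper's proof. You use Markov's inequality for the discretization error, a backward-equation martingale with tensorized Gaussian concentration and a Chernoff bound for the fluctuation, and a deterministic bias absorbed by the shift $4(C_{\rm poc}/\sqrt{N-1}+C_{\rm erg}/T)$. Merging the paper's two Markov steps into one is harmless, and your exponent bookkeeping is correct. Two steps, however, do not go through as written.

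\emph{Discretization term.} You route the triangle inequality through $X^N_{j\theta}$ and assert that the appendix gives $\mathbb E|X^{i,N}_t-X^{i,N}_{j\theta}|^2\le C_{\rm mom}^2\theta$. But Lemma~\ref{primu2} concerns the tamed interpolation $\widetilde X^{i,N}$. Its proof rests on $|b_\alpha(x)|\le 2K\theta^{-\alpha}(1+|x|)$ and on the numerical moments $\widetilde C_2$.

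The exact system is driven by the superlinear $b$, for which only $|b(x)|\le K(1+|x|^{\varkappa+1})$ is available. An $L^2$ increment bound for $X^N$ would therefore need time- and $N$-uniform moments of order $2\varkappa+2$ of the exact particle system. These are not proved in the paper. The corresponding initial moment is not among the hypotheses either: only $m_q$ with even $q\ge2\varkappa+1$ is assumed, e.g.\ $q=4$ when $\varkappa=3/2$. And the resulting constant would not be $C_{\rm mom}$ anyway.

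The repair is to pass through $\widetilde X^{i,N}_t$ instead, as the paper does:
\[
|f(\widetilde X^{i,N}_{j\theta})-f(X^{i,N}_t)|\le|\widetilde X^{i,N}_{j\theta}-\widetilde X^{i,N}_t|+|\widetilde X^{i,N}_t-X^{i,N}_t|,\qquad t\in[j\theta,(j+1)\theta].
\]
Lemma~\ref{primu2} with Cauchy--Schwarz gives $C_{\rm mom}\sqrt\theta$ for the first piece. The cutoff-level bound \eqref{eq:finite-cutoff-path}, which holds at all times and not only on the grid, gives $\mathcal E^{\rm MV}_{\rm disc}(\theta)+\delta D_\varepsilon$ for the second. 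The law of $X^N$, and hence the fluctuation probability, does not depend on $\delta$, so you can let $\delta\downarrow0$ at the end.

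\emph{Lipschitz bound for $U$.} You rightly call this the crux, but the tools you list do not give the constant $h'(0)/(Nc_0)$, where $c_0:=1-\|\nabla_yG\|_\infty\|h'\|_\infty$.

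First, $\inf h'$ can vanish. For the cubic-at-infinity profiles of Examples~\ref{ex:interacting-double-well} and~\ref{ex:holder-local-expansion}, \eqref{h'} gives $h'(r)=O(r^{-2})$, so $r\le h(r)/\inf h'$ is void.

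Second, $h'$ need not be maximal at $0$. Since $\beta_0$ is only an upper bound in \eqref{dissipativity}, one may take, e.g.\ for $b(x)=-Ax-x^3$ with $A>1$ and $d=1$, $\beta_0(r)=-Ar$ on $(0,1]$, $\beta_0(r)=-r$ on $[2,\infty)$, and $\beta_0(r)\le-r$ in between. Then $h'\equiv1$ on $[2,\infty)$ while $h'(0)<1$, so $h(r)\le h'(0)r$ fails.

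Third, any route through an exponential contraction rate of $\sum_ih$ produces factors such as $\|h'\|_\infty/\inf h'$, not $1/c_0$.

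The missing idea is to integrate the Poisson equation \eqref{h} directly along the approximate componentwise reflection coupling. Since $\mathcal L_{ref}h=-r$ and the interaction contributes at most $\|\nabla_yG\|_\infty\|h'\|_\infty\sum_ir^i_t$, after $\delta\downarrow0$ one gets
\[
\mathbb E\int_0^s\sum_{i=1}^Nr^i_t\,dt\le\frac1{c_0}\sum_{i=1}^Nh(|x^i-y^i|),
\qquad\text{hence}\qquad
|U(\cdot,x)-U(\cdot,y)|\le\frac1{Nc_0}\sum_{i=1}^Nh(|x^i-y^i|).
\]
For $x,y$ differing only in coordinate $i$, subdivide the segment into $n$ equal pieces and apply this bound to each. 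The triangle inequality gives $(Nc_0)^{-1}nh(|x^i-y^i|/n)\to(Nc_0)^{-1}h'(0)|x^i-y^i|$, which uses only $h'(0)$ and no monotonicity of $h'$. With this constant, your martingale and tensorization computation yields exactly the stated exponential term. You still need the usual smoothing of the coefficients and of $f$ to justify the backward equation.
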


\begin{example}[Interacting double-well model]\label{ex:interacting-double-well}
\rm The Curie--Weiss double-well model of \cite[Example~2.13]{LWZ21}
provides a genuinely interacting instance of the numerical results above.
To avoid confusion with the H\"older exponent $\beta$ in {\bf (A1)}, write
$a>0$ for the inverse-temperature parameter and $\rho>0$ for the
interaction strength. In dimension $d=1$, let
\[
\begin{aligned}
V(x)&=a\left(\frac{x^4}{4}-\frac{x^2}{2}\right), &
W(x,y)&=-a\rho xy,\\
b(x)&=a(x-x^3), &
G(x,y)&=a\rho y.
\end{aligned}
\]
The interaction is nonzero and depends on the empirical mean of the other
particles. Assumption {\bf (A1)} holds with $\beta=1$, $l=2$, and $L=3a$;
thus $\varkappa=2$ and $K=|b(0)|+2L=6a$. Indeed,
\[
|b(x)-b(y)|\le 3a(1+x^2+y^2)|x-y|.
\]
Since $x^2+xy+y^2\ge (x-y)^2/4$, the two-point drift bound
\eqref{dissipativity} holds with
\[
\beta_0(r)=ar(1-r^2/4),\qquad M=a.
\]
In particular, {\bf (A3)--(A4)} hold. Moreover,
$\|\nabla_xG\|_\infty=0$ and
$\|\nabla_yG\|_\infty=K_G=a\rho$, so {\bf (A5)} holds.
For this profile, \cite[Example~2.13]{LWZ21} gives
$\|h'\|_\infty<e^{a/4}\sqrt{\pi/a}$; hence {\bf (H)} follows whenever
$\rho\sqrt{\pi a}\,e^{a/4}<1$.

It remains to check the extra hypotheses imposed by the tamed scheme.
Fix $\alpha=1/2$, write $s=\sqrt\theta$ and $u=x^2$, and choose any
$c_1>0$ with $s\le a/(2c_1)$. Then
\[
xb_\alpha(x)+c_1x^2
=\frac{-(a-c_1s)u^2+(a+c_1)u}{1+su}
\le \frac{(a+c_1)^2}{2a}.
\]
Thus {\bf (A2)} holds with
$c_2=(a+c_1)^2/(2a)$ and
$\theta^*=(a/(2c_1))^2$.
For example, take
\[
a=\frac1{100},\qquad \rho=\frac1{10},\qquad
c_1=\frac1{25},\qquad c_2=\frac18,\qquad
\theta^*=\frac1{64}.
\]
Now $K=0.06$, $K_G=0.001$, and
\[
K_G\|h'\|_\infty<0.018,
\qquad
c_1-\|\nabla_xG\|_\infty-\sqrt2K_G-8K^2
=0.0112-0.001\sqrt2>0.
\]
Consequently {\bf (H)} and {\bf (A5')} are both satisfied. Substitution
in the definition of $\widetilde\eta$ gives
$\widetilde\eta=\theta^*=1/64$; the admissibility condition for
$\varepsilon$ in Theorem~\ref{MVTHM} holds, for instance, with
$\varepsilon=1$. If $\mu_0=\mathcal N(1,1)$, the required sixth moment
is finite. Theorem~\ref{MVTHM} and Corollary~\ref{coro0} therefore yield,
with constants independent of $N$, $T$, and $\theta$,
\[
\begin{aligned}
W_1(\mu_{\widetilde X_T^{i,N}},\pi^{MV})
&=O\!\left(e^{-\widetilde K_1T}+\theta^{1/2}+(N-1)^{-1/2}\right),
\\
W_1(\pi_\theta^{\rm TEM,N,1},\pi^{MV})
&=O\!\left(\theta^{1/2}+(N-1)^{-1/2}\right)
\end{aligned}
\]
for $N\ge2$ and $0<\theta<1/64$. The continuous-time coupling
conditions for this model are classical; the new conclusion here concerns
the tamed particle discretization and its numerical invariant marginal.
\end{example}

\begin{example}[Local expansion with a H\"older drift]
\label{ex:holder-local-expansion}
\rm In dimension $d=1$, consider
\[
b(x)=x-x^3-\tfrac14\operatorname{sgn}(x)|x|^{1/2},
\qquad G(x,y)=\tfrac1{10}\tanh(x)\tanh(y),
\qquad \operatorname{sgn}(0)=0.
\]
Here $b$ is not locally Lipschitz at zero, whereas
$b'(1/4)+\partial_xG(1/4,z)\ge37/80>0$ for every $z$.
Thus the model exhibits local expansion rather than global dissipativity.
The signed square-root estimate and monotonicity give
{\bf (A1)}, {\bf (A3)--(A5)} with
\[
\beta=\tfrac12,\qquad l=\tfrac52,\qquad
\varkappa=2,\qquad \beta_0(r)=2r-\tfrac14r^3.
\]
For this profile, \eqref{h'} yields $\|h'\|_\infty<e\sqrt\pi$;
hence $\|\nabla_yG\|_\infty\|h'\|_\infty<e\sqrt\pi/10<1/2$,
so {\bf (H)} holds. The negative cubic term ensures {\bf (A2)}
with arbitrarily large $c_1$ upon reducing $\theta^*$, so
{\bf (A5')} can also be met when $\alpha=1/2$.
For a finite initial sixth moment and admissible step sizes,
Corollary~\ref{coro0} therefore gives
\[
W_1(\pi_\theta^{\rm TEM,N,1},\pi^{MV})
=O\!\left(\theta^{1/4}+N^{-1/2}\right).
\]
Unlike the preceding smooth model, this example combines low state
regularity and local expansion with a non-convolution interaction
(since $G(x,x)$ is nonconstant).
It falls outside the hypotheses of the specific tamed Euler convergence
result in \cite[Theorem~1.11]{BH25}, owing to the non-Lipschitz state
drift and the non-convolution interaction. 
\end{example}

\begin{remarks}
	
	\begin{enumerate}
		\item[(1)]{Theorem \ref{MVTHM} gives a finite-time-to-equilibrium error decomposition with three contributions: a discretization term of order $\theta^{\min{\{\alpha,\beta/2\}}}$, a particle approximation term of order $(N-1)^{-\frac{1}{2}}$, and a relaxation term of order $e^{-\widetilde{K}_\varepsilon T}$. Corollary \ref{coro0} then yields the corresponding numerical invariant-measure approximation for the one-particle marginal. In particular, if $\alpha=\frac12$ and $\beta=1$, the invariant marginal bound has the explicit order
			\[
			W_1(\pi_\theta^{\rm TEM,N,1},\pi^{MV})=O(\theta^{1/2}+N^{-1/2}).
			\]
			\blue{For comparison, \cite{MFWB22} obtains sharper KL discretization bounds for Langevin diffusions under global Lipschitz and second-order smoothness assumptions, together with additional moment and Fisher-information conditions. The $W_2$-contraction result of \cite{LMM23}, and its Poincar\'{e}, KL, and total-variation consequences, instead require contractivity at infinity, global Lipschitz regularity, and sufficiently large diffusivity.}}
		
		{Moreover, the constants in the bounds are quantitative and can be traced recursively from the assumptions. The coefficients $A_\varepsilon$, $\widetilde{K}_\varepsilon$, and $\Gamma_i$ are displayed explicitly, while the moment bounds $C_r$ and $\widetilde{C}_r$ are obtained recursively in the appendix from the initial moments and the model parameters.}

		\item[(2)] {Corollary \ref{coro1}
			concerns the deterministic measure obtained after taking expectation of the time-averaged empirical measure. Corollary \ref{coro2} adds the dimension-dependent empirical-measure fluctuation term $\mathcal E_{\rm emp}^{\rm MV}(N)$.}
	\end{enumerate}

\end{remarks}
\section{Proof of the main result for Langevin diffusion}

This section is devoted to proving Theorem \ref{mainre1}. We first establish two a priori estimates for the interpolation process defined in \eqref{interpolation-single}; the proofs are deferred to the appendix for the sake of continuity.
	\begin{lem}\label{pri}
Under the same assumptions and notations as in Theorem~\ref{mainre1}, set
\[
\kappa_\alpha:=
\begin{cases}
c_1,&\alpha\in(0,\frac12),\\
2(c_1-4K^2),&\alpha=\frac12,
\end{cases}
\qquad D_{\rm L}:=2c_2+8K^2.
\]
Then, for every $r\in[2,q]$, the moment constants defined in
\eqref{moment-inter-Langevin} satisfy
\begin{equation}\label{eq:moment-Lan-explicit}
C_r\le
\left[
\max\left\{m_r^{2/r},
\frac{D_{\rm L}+2(d+r-2)}{\kappa_\alpha}\right\}
\right]^{r/2}.
\end{equation}
For $0<r<2$, one has $C_r\le C_2^{r/2}$. In particular,
$C_r<\infty$ for every $r\in(0,q]$.
\end{lem}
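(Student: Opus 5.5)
We work at the grid points first and then pass to the interpolation. Fix $r\in[2,q]$, which we may take to be an even integer or handle by a Jensen/interpolation step at the end. Set $Y_k:=\widetilde X_{k\theta}$ and $Z:=\sqrt2(B_{(k+1)\theta}-B_{k\theta})$. The scheme gives
\[
|Y_{k+1}|^2=|Y_k|^2+2\theta\langle Y_k,b_\alpha(Y_k)\rangle+\theta^2|b_\alpha(Y_k)|^2+2\langle Y_k+\theta b_\alpha(Y_k),Z\rangle+|Z|^2 .
\]
By \eqref{A11} and the second line of \eqref{prop1}, we have $\theta^2|b_\alpha|^2\le 8K^2\theta^{2-2\alpha}(1+|x|^2)$. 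For $\alpha<1/2$ the restriction $\theta<(c_1/(8K^2))^{1/(1-2\alpha)}$ gives $8K^2\theta^{1-2\alpha}\le c_1$. For $\alpha=1/2$ this term equals $8K^2\theta(1+|x|^2)$, which (A2') absorbs. In either case
\[
\mathbb E\bigl[|Y_{k+1}|^2\,\big|\,Y_k\bigr]\le (1-\kappa_\alpha\theta)|Y_k|^2+\theta(D_{\rm L}+2d),
\]
and $\kappa_\alpha\theta<1$ holds because $\theta<1/c_1$ (respectively $\theta<[2(c_1-4K^2)]^{-1}$). Iterating gives $\mathbb E|Y_k|^2\le \max\{m_2,(D_{\rm L}+2d)/\kappa_\alpha\}$, which is the case $r=2$.

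For general $r$, we raise the one-step identity to the power $r/2$. Write $|Y_{k+1}|^2=A+U$ with $A:=|Y_k+\theta b_\alpha(Y_k)|^2$ and $U:=2\langle Y_k+\theta b_\alpha,Z\rangle+|Z|^2$, and expand $(A+U)^{r/2}$ binomially, which is possible since $r$ is even. Taking the conditional expectation kills the odd Gaussian terms, and the $j=1$ term contributes $\frac r2 A^{r/2-1}\cdot 2d\theta$. The $j=2$ term contributes roughly $\binom{r/2}{2}A^{r/2-2}\cdot 8\theta A$, and combining these yields the factor $2(d+r-2)\theta A^{r/2-1}$ that appears in the target bound. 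Higher terms are $O(\theta^{2})$ times lower powers.

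We then bound $A\le(1-\kappa_\alpha\theta)|Y_k|^2+\theta D_{\rm L}$ as before. Applying Young's inequality to the lower-order terms, the goal is a recursion of the form
\[
\mathbb E|Y_{k+1}|^r\le \bigl(1-\tfrac r2\kappa_\alpha\theta\bigr)\mathbb E|Y_k|^r+\tfrac r2\theta\,\bigl(D_{\rm L}+2(d+r-2)\bigr)\,\mathbb E|Y_k|^{r-2},
\]
or equivalently a statement that the set $\{\mathbb E|Y|^r\le R^{r/2}\}$ is invariant, with $R:=\max\{m_r^{2/r},(D_{\rm L}+2(d+r-2))/\kappa_\alpha\}$. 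Jensen's inequality gives $\mathbb E|Y_k|^{r-2}\le(\mathbb E|Y_k|^r)^{(r-2)/r}$, and an induction then shows that $\mathbb E|Y_k|^r\le R^{r/2}$ is preserved. This explains the exact shape of \eqref{eq:moment-Lan-explicit}.

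\textbf{Main obstacle.} The hard step is controlling the higher binomial terms, those carrying $\theta^{j}$ with $j\ge2$ and the Gaussian moments $\mathbb E|Z|^{2j}$. They must be absorbed without spoiling the clean constant $d+r-2$. We would use $\theta\le 1/4$ together with the convex-function inequality
\[
(a+u)^{p}\le a^p+p a^{p-1}u+\tfrac{p(p-1)}2(a+|u|)^{p-2}u^2 ,
\]
and if necessary accept a slightly lossy constant, matching it to the stated one by the choice of $\eta$.

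For the interpolation at $t\in[k\theta,(k+1)\theta]$ the same computation applies with $\theta$ replaced by $t-k\theta\le\theta$, so the bound is uniform in $t$.

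Finally, for $0<r<2$, Jensen's inequality gives $C_r\le C_2^{r/2}$. For $r\in(0,q]$ not an even integer, finiteness follows from Jensen's inequality applied with the moment of order $q$.
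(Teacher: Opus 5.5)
There is a genuine gap, located at the step you call the main obstacle.

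\textbf{The linearized recursion does not follow.} Your target recursion has coefficient $1-\frac r2\kappa_\alpha\theta$ in front of $\mathbb E|Y_k|^r$. It cannot be derived from the only drift information available, $A\le(1-\kappa_\alpha\theta)|Y_k|^2+\theta D_{\rm L}$. Raising this to the power $r/2$ produces $(1-\kappa_\alpha\theta)^{r/2}|Y_k|^r$. For $r>2$, Bernoulli's inequality gives $(1-\kappa_\alpha\theta)^{r/2}>1-\frac r2\kappa_\alpha\theta$. This leaves a positive term of order $\theta^2\,\mathbb E|Y_k|^r$, which no multiple of $\mathbb E|Y_k|^{r-2}$ can absorb.

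The higher Gaussian terms are also strictly positive. For $r=4$ one has exactly $\mathbb E[|Y_{k+1}|^4\mid Y_k]=A^2+4(d+2)\theta A+4d(d+2)\theta^2$.

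\textbf{The fallbacks do not rescue it.} Accepting a lossier constant and adjusting $\eta$ is not an option, since $\eta$, $\kappa_\alpha$, $D_{\rm L}$ and $d+r-2$ are all fixed by the statement. The recursion is also not ``equivalent'' to invariance of $\{\mathbb E|Y|^r\le R^{r/2}\}$. In addition, the binomial expansion only treats even $r$, whereas the explicit bound is claimed for every real $r\in[2,q]$. Jensen from order $q$ gives finiteness only, with $m_q^{2/q}$ and $d+q-2$ in place of $m_r^{2/r}$ and $d+r-2$.

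\textbf{The missing idea.} Track $U_r(t):=(\mathbb E|\widetilde X_t|^r)^{2/r}$, the $L^{r/2}$-norm of $|\widetilde X_t|^2$, instead of $\mathbb E|\widetilde X_t|^r$. The argument then runs as follows.
\begin{enumerate}
\item For deterministic $y$, set $F(s):=\mathbb E|y+\sqrt2B_s|^r$. It\^o's formula and Jensen give $F'(s)=r(d+r-2)\mathbb E|y+\sqrt2B_s|^{r-2}\le r(d+r-2)F(s)^{1-2/r}$. Hence $(F^{2/r})'\le 2(d+r-2)$, and so $\mathbb E|y+\sqrt2B_s|^r\le(|y|^2+2(d+r-2)s)^{r/2}$. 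This single inequality absorbs all the higher Gaussian terms at once; for $r=4$ it reduces to $4d(d+2)\le 4(d+2)^2$.
\item Conditioning on $Y$ and applying Minkowski in $L^{r/2}$ gives $(\mathbb E|Y+\sqrt2B_s|^r)^{2/r}\le(\mathbb E|Y|^r)^{2/r}+2(d+r-2)s$ for $Y$ independent of $B$.
\item Minkowski in $L^{r/2}$ applied to your pointwise bound $|x+sb_\alpha(x)|^2\le(1-\kappa_\alpha s)|x|^2+D_{\rm L}s$, valid for $0\le s\le\theta$, handles the drift.
\end{enumerate}
Together these give $U_r(k\theta+s)\le(1-\kappa_\alpha s)U_r(k\theta)+\kappa_\alpha s H$, with $H:=(D_{\rm L}+2(d+r-2))/\kappa_\alpha$. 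Since $0\le\kappa_\alpha s\le1$, the right side is a convex combination of $U_r(k\theta)$ and $H$. Iterating with $s=\theta$, and then using $0\le s\le\theta$ between grid points, yields $U_r\le\max\{m_r^{2/r},H\}$ at all times. This holds for every real $r\in[2,q]$ and is exactly the claimed bound.
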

\begin{lem}\label{pri2}
	\blue{Under the hypotheses of Theorem~\ref{mainre1}, for any $k\in\mathbb{N}$, $0<\theta<\eta$, and $t\in[k\theta,(k+1)\theta]$, we have}
	\begin{equation*}
		\mathbb{E}[|\widetilde{X}_t-\widetilde{X}_{k\theta}|^2]\leq\theta(8K^2(1+C_2)+2d).
	\end{equation*}
\end{lem}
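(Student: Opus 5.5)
We prove Lemma~\ref{pri2} directly from the interpolation formula \eqref{interpolation-single}, combined with the growth bound on the tamed drift in \eqref{prop1} and the uniform second-moment bound of Lemma~\ref{pri}.

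For $t\in[k\theta,(k+1)\theta]$, the interpolation gives
\[
\widetilde X_t-\widetilde X_{k\theta}=(t-k\theta)\,b_\alpha(\widetilde X_{k\theta})+\sqrt2\,(B_t-B_{k\theta}).
\]
Applying $|u+v|^2\le 2|u|^2+2|v|^2$, we get
\[
\mathbb E|\widetilde X_t-\widetilde X_{k\theta}|^2\le 2\theta^2\,\mathbb E|b_\alpha(\widetilde X_{k\theta})|^2+4\,\mathbb E|B_t-B_{k\theta}|^2 .
\]
We do not use independence to drop the cross term; the factor $2$ absorbs it. The Brownian increment satisfies $\mathbb E|B_t-B_{k\theta}|^2=d(t-k\theta)\le d\theta$. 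The constant $2d$ in the statement therefore suggests a sharper split. Since $\widetilde X_{k\theta}$ is $\mathscr F_{k\theta}$-measurable and the increment $B_t-B_{k\theta}$ is independent of it with mean zero, the cross term vanishes exactly. This gives
\[
\mathbb E|\widetilde X_t-\widetilde X_{k\theta}|^2=(t-k\theta)^2\,\mathbb E|b_\alpha(\widetilde X_{k\theta})|^2+2d(t-k\theta),
\]
and this is the route we take.

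For the drift term we use the second bound in \eqref{prop1}, which is valid because $\theta<\eta\le 1/4\le 1$:
\[
|b_\alpha(x)|^2\le 4K^2\theta^{-2\alpha}(1+|x|)^2\le 8K^2\theta^{-2\alpha}(1+|x|^2).
\]
Hence
\[
\theta^2\,\mathbb E|b_\alpha(\widetilde X_{k\theta})|^2\le 8K^2\theta^{2-2\alpha}(1+C_2),
\]
where $C_2$ is finite by Lemma~\ref{pri}, because $q\ge2$. Since $\alpha\le\frac12$, we have $2-2\alpha\ge1$, and with $\theta\le1$ this gives $\theta^{2-2\alpha}\le\theta$. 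Adding the two contributions yields
\[
\mathbb E|\widetilde X_t-\widetilde X_{k\theta}|^2\le\theta\bigl(8K^2(1+C_2)+2d\bigr).
\]

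The only delicate point is the exponent bookkeeping. Taming inflates the drift by $\theta^{-\alpha}$, and the bound remains of order $\theta$ only because $\alpha\le1/2$; this is exactly where the restriction on the taming exponent enters. The remaining inputs are the uniform-in-time moment bound $C_2$, taken from Lemma~\ref{pri}, and the vanishing of the cross term, which uses adaptedness of $\widetilde X_{k\theta}$ and independence of the Brownian increment.
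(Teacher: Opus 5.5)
Your proof is correct and follows the paper's argument. The cross term vanishes because the Brownian increment is independent of $\widetilde X_{k\theta}$; the drift term is bounded via $|b_\alpha(x)|^2\le 8K^2\theta^{-2\alpha}(1+|x|^2)$ and $C_2$; and $\theta^{2-2\alpha}\le\theta$ for $\alpha\le\frac12$, $\theta\le1$ (the preliminary crude split with the factor $2$ is superfluous and can be dropped).
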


The preceding two lemmas yield the following distance estimate between the Langevin process and its approximation.
\begin{prop}\label{singlee}
	\blue{Under the hypotheses of Theorem~\ref{mainre1}, for $T>0$, $0<\theta<\eta$, and $\varepsilon>0$ such that $1-\varepsilon M>0$,}
	\begin{equation*}
		W_1(\mu_{\widetilde{X}_T},\mu_{X_{T}})\leq \sup\limits_{r>0}\frac{r}{h(r)+\varepsilon r}\frac{\Gamma_1\theta^{\alpha}+\Gamma_2\theta^{\frac{\beta}{2}}}{K_\varepsilon}.
	\end{equation*}
\end{prop}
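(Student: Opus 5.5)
We couple the exact process $X_t$ from \eqref{singleparticle} and the interpolation $\widetilde X_t$ from \eqref{interpolation-single}. Both start from the same $X_0$, and we use a reflection coupling of the driving noises in the spirit of \cite{Eberle16,LWZ21}. Since the drift of $\widetilde X$ is frozen at grid points, we write $d\widetilde X_t=b_\alpha(\widetilde X_{\lfloor t\rfloor})dt+\sqrt2\,d\widetilde B_t$ with $\lfloor t\rfloor:=k\theta$ on $[k\theta,(k+1)\theta)$. Here $\widetilde B_t=\int_0^t (I-2e_se_s^\top\mathbf 1_{\{Z_s\neq0\}})dB_s$, with $Z_s=X_s-\widetilde X_s$ and $e_s=Z_s/|Z_s|$. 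To avoid the singularity at $Z=0$ we use the standard approximate reflection/synchronous mixture with a cutoff parameter $\delta\downarrow0$, as in \cite{LWZ21}; we suppress this in the sketch.

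Set $r_t=|Z_t|$ and $f(r)=h(r)+\varepsilon r$. By It\^o's formula, in the reflection regime
\[
dr_t=\langle e_t,b(X_t)-b_\alpha(\widetilde X_{\lfloor t\rfloor})\rangle dt+2\sqrt2\,dW_t ,
\]
with $W$ a one-dimensional Brownian motion. We split the drift difference as
\[
b(X_t)-b_\alpha(\widetilde X_{\lfloor t\rfloor})=[b(X_t)-b(\widetilde X_t)]+[b(\widetilde X_t)-b(\widetilde X_{\lfloor t\rfloor})]+[b(\widetilde X_{\lfloor t\rfloor})-b_\alpha(\widetilde X_{\lfloor t\rfloor})].
\]
The first term is bounded by $\beta_0(r_t)$ through \eqref{dissipativity-single}. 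Hence $df(r_t)\le [4h''+\beta_0(h'+\varepsilon)](r_t)dt+(\|h'\|_\infty+\varepsilon)|R_t|dt+dM_t$, where $R_t$ collects the last two terms, and we use $h''\le 0$ wherever needed for the $\varepsilon r$ part. By \eqref{h} and (A4) we get $4h''+\beta_0h'=-r$ and $\varepsilon\beta_0\le\varepsilon Mr$. So the generator part is at most $-(1-\varepsilon M)r\le -K_\varepsilon f(r)$, because $f(r)\le(\|h'\|_\infty+\varepsilon)r$.

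Taking expectations and applying Gronwall gives
\[
\mathbb E f(r_T)\le (\|h'\|_\infty+\varepsilon)\int_0^T e^{-K_\varepsilon(T-s)}\mathbb E|R_s|\,ds,
\]
with $f(r_0)=0$. The key step is a time-uniform bound on $\mathbb E|R_s|$.

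The taming term is controlled by \eqref{prop1}: it is at most $2K\theta^\alpha(1+\mathbb E|\widetilde X_{\lfloor s\rfloor}|^{2\varkappa+1})\le 2K\theta^\alpha(1+C_{2l+2\beta-1})$, using Lemma~\ref{pri}.

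The H\"older term is bounded via (A1) by
\[
L\,\mathbb E\bigl[(1+|\widetilde X_s|^l+|\widetilde X_{\lfloor s\rfloor}|^l)\,|\widetilde X_s-\widetilde X_{\lfloor s\rfloor}|^\beta\bigr].
\]
We apply H\"older's inequality with exponents $2/(2-\beta)$ and $2/\beta$, so the increment enters through its second moment. This gives
\[
L\bigl(1+2C_{2l/(2-\beta)}^{1-\beta/2}\bigr)\bigl(\mathbb E|\widetilde X_s-\widetilde X_{\lfloor s\rfloor}|^2\bigr)^{\beta/2},
\]
where we use Minkowski's inequality on the weight, and the second factor is at most $\theta^{\beta/2}(8K^2(1+C_2)+2d)^{\beta/2}$ by Lemma~\ref{pri2}. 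Combining the two pieces yields $\Gamma_1\theta^\alpha+\Gamma_2\theta^{\beta/2}$, and integrating $e^{-K_\varepsilon(T-s)}$ produces the factor $1/K_\varepsilon$.

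Finally, $W_1(\mu_{\widetilde X_T},\mu_{X_T})\le\mathbb E r_T\le\sup_r\frac{r}{f(r)}\,\mathbb E f(r_T)$, which is the claim.

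I expect the main obstacle to be the rigorous coupling construction. The reflection must be switched off near $Z=0$, and the mismatch in drift between the grid-frozen and continuous processes means $r_t$ can hit zero and separate again. To handle this I would take the $\delta$-approximate coupling of \cite{LWZ21}, let $\delta\to0$, and use $\beta_0(0)=0$ together with the continuity convention for $\beta_0$ stated after (A4).
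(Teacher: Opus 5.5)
Your proposal is correct and is essentially the paper's proof. It uses the same $\delta$-cutoff reflection coupling of $X$ with the interpolation $\widetilde X$, the same three-term drift splitting, the Poisson equation \eqref{h} together with \textbf{(A4)} to extract the rate $K_\varepsilon$, and Lemmas~\ref{pri} and~\ref{pri2} with H\"older's inequality to obtain $\Gamma_1\theta^\alpha+\Gamma_2\theta^{\beta/2}$. The only difference is that the paper iterates the one-step estimate over grid intervals, carrying an explicit $O(\delta)$ cutoff error, where you apply Gronwall globally.

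One small correction: your appeal to $h''\le0$ is unnecessary and false in general, since $h$ need not be concave. Nothing depends on it, because $(h+\varepsilon r)''=h''$ and only the identity $4h''+\beta_0h'=-r$ is used.
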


\begin{proof}
	For an integer $k\geq0$ and $t\in[k\theta,(k+1)\theta]$, we introduce the following coupling of $X_t$ and $\widetilde{X}_t$.
	
	Given $\delta>0,$ let $\lambda_{\delta},\pi_{\delta}:\mathbb{R}^+\rightarrow[0,1]$ be two Lipschitz continuous functions such that
	\begin{equation}\label{def-para1}
		\lambda_{\delta}(r)^2+\pi_{\delta}(r)^2=1,\forall r\in\mathbb{R}^+,
	\end{equation}
	and
	\begin{equation}\label{def-para2}
		\lambda_{\delta}(r)=\begin{cases}
			1, \text{if } r\geq\delta,\\
			0,\text{if } r\leq\frac{\delta}{2}.
		\end{cases}
	\end{equation}
	\blue{A coupling of $X_t$ and $\widetilde{X}_t$ on $[k\theta,(k+1)\theta]$, with initial values $X_{k\theta}$ and $\widetilde{X}_{k\theta}$, is given by}
	\begin{equation}\label{coupling}
		\begin{split}
			&dX_t=\sqrt{2}[\lambda_{\delta}(r_t)dB^1_t+\pi_{\delta}(r_t)dB_t^2]+b(X_t)dt,\\
			&d\widetilde{X}_t=\sqrt{2}[\lambda_{\delta}(r_t)R_tdB_t^1+\pi_{\delta}(r_t)dB_t^2]+b_{\alpha}(\widetilde{X}_{k\theta})dt,
		\end{split}
	\end{equation}
	where $Z_t:=X_t-\widetilde{X}_t$, $r_t:=|Z_t|$, and $R_t:=I_d-2e_te_t^T$.
	Set $e_t=Z_t/r_t$ if $r_t>0$ and $e_t=e_\star$ if $r_t=0$, where $e_\star$ is a fixed unit vector. The choice at zero does not affect the coupling because $\lambda_\delta=0$ on $[0,\delta/2]$. The cutoff diffusion coefficients are globally Lipschitz, and (A4) gives a one-sided Lipschitz bound for $b$. Localization and the quadratic Lyapunov bound therefore give a nonexplosive strong solution on each grid interval, which can be concatenated from time zero.
	Here $B^1,B^2$ are two independent Brownian motions. In fact, according to L\'evy's characterization of Brownian motions,
	\begin{align*}
		&\hat{B}_t:=\int_{0}^{t}\lambda_{\delta}(|Z_s|)dB_s^{1}+\int_{0}^{t}\pi_{\delta}(|Z_s|)dB_s^{2}\\
		&\check{B}_t:=\int_{0}^{t}\lambda_\delta(|Z_s|)R_sdB_s^{1}+\int_{0}^{t}\pi_{\delta}(|Z_s|)dB_s^{2}
	\end{align*}
		\blue{are two Brownian motions in $\mathbb{R}^d$, so that $(X_t,\widetilde{X}_t)$ is indeed a coupling process.}
	
	By (\ref{coupling}), we have
	\begin{align*}
		dZ_t=(b(X_t)-b_{\alpha}(\widetilde{X}_{k\theta}))dt+2\sqrt{2}\lambda_{\delta}(r_t)e_te_t^TdB_t^1.
	\end{align*}
	Note that $d\tilde{B}_t:=e^T_tdB_t^1$ is a one-dimensional Brownian motion. Then,
	\begin{align*}
		dZ_t=(b(X_t)-b_{\alpha}(\widetilde{X}_{k\theta}))dt+2\sqrt{2}\lambda_{\delta}(r_t)e_td\tilde{B}_t.
	\end{align*}
	\blue{Before applying It\^o's formula to $r_t$, we note that, although $|\cdot|$ is not $C^2$ at $0$, the process $Z_t$ is continuous and has bounded variation near $0$. Therefore, \cite[Lemma~7]{DEGZ20} yields}
	\begin{align*}
		1_{\left\{r_t\neq0\right\}}dr_t&=1_{\left\{r_t\neq0\right\}}\frac{\langle Z_t,dZ_t\rangle}{|Z_t|}+4 \times 1_{\left\{r_t\neq0\right\}}\lambda_{\delta}^2(r_t)\langle(-\frac{Z_tZ_t^T}{r^3_t}+\frac{I_d}{r_t})e_t,e_t\rangle dt.
	\end{align*}
	\blue{A direct calculation gives $\langle(-\frac{Z_tZ_t^T}{r^3_t}+\frac{I_d}{r_t})e_t,e_t\rangle=0$. Therefore, by \eqref{dissipativity-single},}
	\begin{align*}
		1_{\left\{r_t\neq0\right\}}dr_t&=1_{\left\{r_t\neq0\right\}}\langle e_t,b(X_t)-b_{\alpha}(\widetilde{X}_{k\theta})\rangle dt+2\sqrt{2}\lambda_{\delta}(r_t)1_{\left\{r_t\neq0\right\}}d\tilde{B}_t\\
		&=1_{\left\{r_t\neq0\right\}}\langle e_t,b(X_t)-b(\widetilde{X}_{t})\rangle dt
		+1_{\left\{r_t\neq0\right\}}\langle e_t,b(\widetilde{X}_t)-b(\widetilde{X}_{k\theta})\rangle dt\\
		&\quad +1_{\left\{r_t\neq0\right\}}\langle e_t,b(\widetilde{X}_{k\theta})-b_{\alpha}(\widetilde{X}_{k\theta})\rangle dt
		+2\sqrt{2}\lambda_{\delta}(r_t)1_{\left\{r_t\neq0\right\}}d\tilde{B}_t\\
		&\leq \beta_0(r_t)dt+1_{\left\{r_t\neq0\right\}}\langle e_t,b(\widetilde{X}_t)-b(\widetilde{X}_{k\theta})\rangle dt\\
		&\quad +1_{\left\{r_t\neq0\right\}}\langle e_t,b(\widetilde{X}_{k\theta})-b_{\alpha}(\widetilde{X}_{k\theta})\rangle dt
		+2\sqrt{2}\lambda_{\delta}(r_t)1_{\left\{r_t\neq0\right\}}d\tilde{B}_t\\
		&\le\beta_0(r_t)dt+1_{\left\{r_t\neq0\right\}}|b(\widetilde{X}_t)-b(\widetilde{X}_{k\theta})|dt\\
		&\quad +1_{\left\{r_t\neq0\right\}}|b(\widetilde{X}_{k\theta})-b_{\alpha}(\widetilde{X}_{k\theta})|dt
		+2\sqrt{2}\lambda_{\delta}(r_t)1_{\left\{r_t\neq0\right\}}d\tilde{B}_t.
	\end{align*}
	When $r_t=0,$ the coupling degenerates into a synchronous coupling. It follows that
	\begin{align*}
		1_{\left\{r_t=0\right\}}dZ_t=&1_{\left\{r_t=0\right\}}(b(X_t)-b_\alpha(\widetilde{X}_{k\theta}))dt\\
		=&1_{\left\{r_t=0\right\}}(b(\widetilde{X}_t)-b_\alpha(\widetilde{X}_{k\theta}))dt,
	\end{align*}
	\blue{which is a bounded-variation process. Therefore,}
	\begin{align}\label{eq:rt=0}
		1_{\left\{r_t=0\right\}}dr_t\le&1_{\left\{r_t=0\right\}}|b(\widetilde{X}_t)-b_\alpha(\widetilde{X}_{k\theta})|dt\notag\\
		\le&1_{\left\{r_t=0\right\}}(|b(\widetilde{X}_t)-b(\widetilde{X}_{k\theta})|+|b(\widetilde{X}_{k\theta})-b_\alpha(\widetilde{X}_{k\theta})|)dt.
	\end{align}
	Hence,
	\begin{align*}
		dr_t\le&\beta_0(r_t)dt+|b(\widetilde{X}_t)-b(\widetilde{X}_{k\theta})|dt\\
		&\quad +|b(\widetilde{X}_{k\theta})-b_{\alpha}(\widetilde{X}_{k\theta})|dt
		+2\sqrt{2}\lambda_{\delta}(r_t)1_{\left\{r_t\neq0\right\}}d\tilde{B}_t
	\end{align*}
	
	Recalling \eqref{h'} and \eqref{h}, for any $\varepsilon$ such that $1-\varepsilon M>0$, we define 
	\begin{equation*}
		h_\varepsilon(r):=h(r)+\varepsilon r \mbox{ and }H_t^\varepsilon:=e^{K_\varepsilon(t-k\theta)}h_\varepsilon(r_t),
	\end{equation*}
	where $K_\varepsilon$ is a positive constant depending on $\varepsilon$ which will be determined later.
	Then, using It\^o's formula,
	\begin{align}\label{HT}
		dH_t^\varepsilon&=K_\varepsilon H_t^\varepsilon dt+e^{K_\varepsilon (t-k\theta)}(h'(r_t)+\varepsilon)dr_t+4h''(r_t)e^{K_\varepsilon (t-k\theta)}\lambda_\delta^2(r_t)dt\notag\\
		&\leq K_\varepsilon H_t^\varepsilon dt+e^{K_\varepsilon (t-k\theta)}(h'(r_t)+\varepsilon)\beta_0(r_t)dt\notag\\
		&\quad+e^{K_\varepsilon (t-k\theta)}(h'(r_t)+\varepsilon)|b(\widetilde{X}_t)-b(\widetilde{X}_{k\theta})|dt\notag\\
		&\quad+e^{K_\varepsilon (t-k\theta)}(h'(r_t)+\varepsilon)|b(\widetilde{X}_{k\theta})-b_{\alpha}(\widetilde{X}_{k\theta})|dt\notag\\
		&\quad+2\sqrt{2}e^{K_\varepsilon (t-k\theta)}(h'(r_t)+\varepsilon)\lambda_{\delta}(r_t)d\tilde{B}_t+4h''(r_t)e^{K_\varepsilon (t-k\theta)}\lambda_\delta^2(r_t)dt\notag\\
		&=2\sqrt{2}e^{K_\varepsilon (t-k\theta)}(h'(r_t)+\varepsilon)\lambda_{\delta}(r_t)d\tilde{B}_t+K_\varepsilon H_t^\varepsilon dt +e^{K_\varepsilon (t-k\theta)}\mathcal{L}_{\lambda_{\delta}}h(r_t)dt\notag\\
		&\quad +\varepsilon e^{K_\varepsilon (t-k\theta)}\beta_0(r_t)dt+e^{K_\varepsilon (t-k\theta)}(h'(r_t)+\varepsilon) |b(\widetilde{X}_t)-b(\widetilde{X}_{k\theta})| dt\notag \\
		&\quad+e^{K_\varepsilon (t-k\theta)}(h'(r_t)+\varepsilon) |b(\widetilde{X}_{k\theta})-b_{\alpha}(\widetilde{X}_{k\theta})| dt\notag\\
		&:=dM_t+D_t^\varepsilon dt+R_t^\varepsilon dt,
	\end{align}
	where
	\begin{align*}
		&\mathcal{L}_{\lambda_\delta} h(r_t):=4\lambda_\delta^2(r_t) h''(r_t)+\beta_0(r_t) h'(r_t),\\
		&dM_t:=2\sqrt{2}e^{K_\varepsilon (t-k\theta)}(h'(r_t)+\varepsilon)\lambda_{\delta}(r_t)d\tilde{B}_t,\\
		&D_t^\varepsilon:=K_\varepsilon H_t^\varepsilon+e^{K_\varepsilon (t-k\theta)}(\mathcal{L}_{\lambda_{\delta}}h(r_t)+\varepsilon \beta_0(r_t)),\\
		&R_t^\varepsilon:=e^{K_\varepsilon (t-k\theta)}(h'(r_t)+\varepsilon)|b(\widetilde{X}_t)-b(\widetilde{X}_{k\theta})|\\
		&\quad \quad \quad +e^{K_\varepsilon (t-k\theta)}(h'(r_t)+\varepsilon)|b(\widetilde{X}_{k\theta})-b_{\alpha}(\widetilde{X}_{k\theta})|.
	\end{align*}
	Here $M_t$ is a martingale since $|(h'(r)+\varepsilon)\lambda_{\delta}(r)|\le\|h'\|_\infty+\varepsilon.$
	
	First, we deal with $D_t^\varepsilon$. Since by {\bf(A4)} it holds that $\beta_0(r_t)\leq Mr_t,$ we obtain
	\begin{align*}
		D_t^\varepsilon=&K_\varepsilon H_t^\varepsilon+e^{K_\varepsilon (t-k\theta)}[-r_t+(1-\lambda_{\delta}^2(r_t))(r_t+\beta_0(r_t)h'(r_t))\\
		&\qquad+\varepsilon \beta_0(r_t)]\\
		=&e^{K_\varepsilon (t-k\theta)}(1-\lambda_{\delta}^2(r_t))(r_t+\beta_0(r_t)h'(r_t))\\
		&+e^{K_\varepsilon(t-k\theta)}(K_\varepsilon h_\varepsilon(r_t)-r_t+\varepsilon \beta_0(r_t))\\
		\leq&e^{K_\varepsilon (t-k\theta)}(1-\lambda_{\delta}^2(r_t))(r_t+\beta^{+}_0(r_t)h'(r_t))\\
		&+e^{K_\varepsilon (t-k\theta)}[K_\varepsilon(\|h'\|_\infty+\varepsilon)+\varepsilon M-1]r_t.
	\end{align*}
	Taking $K_\varepsilon=\frac{1-\varepsilon M}{\|h'\|_\infty+\varepsilon},$ we have
	\[
	D_t^\varepsilon\leq e^{K_\varepsilon (t-k\theta)}(1-\lambda_{\delta}^2(r_t))(r_t+\beta^+_0(r_t)h'(r_t)).
	\]
	By the basic inequality, Assumption  ({\bf A1}) and Eq.\eqref{prop1},
	\begin{align}\label{Rt1}
		|R_t^\varepsilon|\leq&e^{K_\varepsilon (t-k\theta)}(\|h'\|_\infty+\varepsilon)[L(1+|\widetilde{X}_t|^l+|\widetilde{X}_{k\theta}|^l)|\widetilde{X}_t-\widetilde{X}_{k\theta}|^\beta\notag\\
		&\qquad+2K\theta^\alpha(1+|\widetilde{X}_{k\theta}|^{2\varkappa+1})]\notag\\
		=&e^{K_\varepsilon (t-k\theta)}(\|h'\|_\infty+\varepsilon)[L|\widetilde{X}_t-\widetilde{X}_{k\theta}|^{\beta}+L|\widetilde{X}_t|^l|\widetilde{X}_t-\widetilde{X}_{k\theta}|^\beta\notag\\
		&\qquad+L|\widetilde{X}_{k\theta}|^l|\widetilde{X}_t-\widetilde{X}_{k\theta}|^\beta\notag\\
		&+2K\theta^\alpha(1+|\widetilde{X}_{k\theta}|^{2\varkappa+1})].
	\end{align}
	Since $2l/(2-\beta)\le2\varkappa+1\le q$, Lemma~\ref{pri} supplies the moments required for the local H\"older estimate (with $C_0=1$ if $l=0$). By H\"older's inequality and Lemmas \ref{pri} and \ref{pri2}, we have
	\begin{align}\label{Rt2}
		&\mathbb{E}[|\widetilde{X}_t-\widetilde{X}_{k\theta}|^{\beta}+|\widetilde{X}_t|^l|\widetilde{X}_t-\widetilde{X}_{k\theta}|^\beta+|\widetilde{X}_{k\theta}|^l|\widetilde{X}_t-\widetilde{X}_{k\theta}|^\beta]\notag\\
		\leq&(\mathbb{E}|\widetilde{X}_t-\widetilde{X}_{k\theta}|^2)^{\frac{\beta}{2}}+(\mathbb{E}|\widetilde{X}_t|^{\frac{2l}{2-\beta}})^{1-\frac{\beta}{2}}(\mathbb{E}|\widetilde{X}_t-\widetilde{X}_{k\theta}|^2)^{\frac{\beta}{2}}\notag\\
		&+(\mathbb{E}|\widetilde{X}_{k\theta}|^{\frac{2l}{2-\beta}})^{1-\frac{\beta}{2}}(\mathbb{E}|\widetilde{X}_t-\widetilde{X}_{k\theta}|^2)^{\frac{\beta}{2}}\notag\\
		=&(1+(\mathbb{E}|\widetilde{X}_t|^{\frac{2l}{2-\beta}})^{1-\frac{\beta}{2}}+(\mathbb{E}|\widetilde{X}_{k\theta}|^{\frac{2l}{2-\beta}})^{1-\frac{\beta}{2}})(\mathbb{E}|\widetilde{X}_t-\widetilde{X}_{k\theta}|^2)^{\frac{\beta}{2}}\notag\\
		\leq&(1+2(C_{\frac{2l}{2-\beta}})^{1-\frac{\beta}{2}})[\theta(8K^2(1+C_2)+2d)]^{\frac{\beta}{2}}.
	\end{align}
	The taming contribution is controlled by the other required moment:
	\[
	\mathbb E|\widetilde X_{k\theta}|^{2\varkappa+1}\le C_{2\varkappa+1}<\infty,
	\qquad 2\varkappa+1\le q.
	\]
	Denote
	\begin{align*}
		\Gamma_1&:=2K(\|h'\|_\infty+\varepsilon)(1+C_{2l+2\beta-1}),\\
		\Gamma_2&:=L(\|h'\|_\infty+\varepsilon)(1+2(C_{\frac{2l}{2-\beta}})^{1-\frac{\beta}{2}})[(8K^2(1+C_2)+2d)]^{\frac{\beta}{2}}.
	\end{align*}
	By \eqref{Rt1}, \eqref{Rt2} and  Lemma \ref{pri}, we get
	\begin{equation*}
		\mathbb{E}|R_t^\varepsilon|\leq e^{K_\varepsilon(t-k\theta)}[\Gamma_1\theta^\alpha+\Gamma_2{\theta^{\frac{\beta}{2}}}].
	\end{equation*}
	Then, integrating from $k\theta$ to $(k+1)\theta$ and then taking  expectation, we have
	\begin{align}\label{he}
		\mathbb{E}[e^{K_\varepsilon\theta}h_\varepsilon(r_{(k+1)\theta})]&\leq\mathbb{E}h_\varepsilon(|X_{k\theta}-\widetilde{X}_{k\theta}|)\notag\\
		&\quad +\mathbb{E}\int_{k\theta}^{(k+1)\theta}e^{K_\varepsilon (t-k\theta)}(1-\lambda_{\delta}^2(r_t))(r_t+\beta^+_0(r_t)h'(r_t))dt\notag\\
		&\quad +\mathbb{E}\int_{k\theta}^{(k+1)\theta}|R_t^\varepsilon| dt\notag\\
		&\leq\mathbb{E}h_\varepsilon(|X_{k\theta}-\widetilde{X}_{k\theta}|) +\frac{e^{K_\varepsilon\theta}-1}{K_\varepsilon}[\Gamma_1\theta^{\alpha}+\Gamma_2\theta^{\frac{\beta}{2}}]\notag\\
		&\quad +\mathbb{E}\int_{k\theta}^{(k+1)\theta}e^{K_\varepsilon (t-k\theta)}(1-\lambda_{\delta}^2(r_t))(r_t+\beta^+_0(r_t)h'(r_t))dt.
	\end{align}
	Since $1-\lambda^2_\delta$ is bounded by definition, by Assumptions ({\bf A3}) and ({\bf A4}), it follows that, for any $\delta>0,$ when $r\in(0,\delta),$
	\begin{equation*}
		\beta^+_0(r)h'(r)\leq\|h'\|_{\infty}M^+\delta<\infty.
	\end{equation*}
	Then,
	\begin{align*}
		(1-\lambda_{\delta}^2(r_t))(r_t+\beta^+_0(r_t)h'(r_t))=&(1-\lambda_{\delta}^2(r_t))(r_t+\beta^+_0(r_t)h'(r_t)){1}_{\left\{0<r_t<\delta\right\}}\\
		\le&(1-\lambda_{\delta}^2(r_t))\delta(1+M^+\|h'\|_\infty)\le\delta(1+M^+\|h'\|_\infty).
	\end{align*}
	Therefore, by \eqref{he} we have,
	\begin{align*}
		\mathbb{E}[h_{\varepsilon}(r_{(k+1)\theta})]\leq& e^{-K_\varepsilon\theta}\mathbb{E}[h_\varepsilon(r_{k\theta})] +\mathbb{E}\int_{k\theta}^{(k+1)\theta}(1-\lambda_{\delta}^2(r_t))(r_t+\beta^+_0(r_t)h'(r_t))dt\notag\\
		&\quad +\frac{1-e^{-K_\varepsilon\theta}}{K_\varepsilon}[\Gamma_1\theta^{\alpha}+\Gamma_2\theta^{\frac{\beta}{2}}]\notag\\
		\leq&e^{-K_\varepsilon\theta}\mathbb{E}[h_\varepsilon(r_{k\theta})]+\delta\theta(1+M^+\|h'\|_\infty)+\frac{1-e^{-K_\varepsilon\theta}}{K_\varepsilon}[\Gamma_1\theta^{\alpha}+\Gamma_2\theta^{\frac{\beta}{2}}].
	\end{align*}
	By iterating, we get
	\begin{align*}
		\mathbb{E}[h_{\varepsilon}(r_{(k+1)\theta})]\leq& e^{-(k+1)K_\varepsilon\theta}\mathbb{E}[h_\varepsilon(r_{0})]+\sum_{i=0}^ke^{-(k-i)K_\varepsilon\theta}\theta\delta(1+M^+\|h'\|_\infty)\notag\\
		& +\frac{1-e^{-K_\varepsilon\theta}}{K_\varepsilon}[\Gamma_1\theta^{\alpha}+\Gamma_2\theta^{\frac{\beta}{2}}]\sum_{i=0}^ke^{-iK_\varepsilon\theta}\\
		\leq&e^{-(k+1)K_\varepsilon\theta}\mathbb{E}[h_\varepsilon(r_{0})]+\frac{\delta\theta}{1-e^{-K_\varepsilon\theta}}(1+M^+\|h'\|_\infty)+\frac{\Gamma_1\theta^{\alpha}+\Gamma_2\theta^{\frac{\beta}{2}}}{K_\varepsilon}.
	\end{align*}
	
	Finally, as stated below \eqref{he}, 
	\begin{align*}
		W_1&(\mu_{X_{(k+1)\theta}},\mu_{\widetilde{X}_{(k+1)\theta}})\leq\varliminf\limits_{\delta\rightarrow0}\mathbb{E}[|X_{(k+1)\theta}-\widetilde{X}_{(k+1)\theta}|]\\
		\leq&\sup\limits_{r>0}\frac{r}{h(r)+\varepsilon r}\varliminf\limits_{\delta\rightarrow0}\mathbb{E}[h_\varepsilon(|X_{(k+1)\theta}-\widetilde{X}_{(k+1)\theta}|)]\\
		\leq&\sup\limits_{r>0}\frac{r}{h(r)+\varepsilon r}e^{-(k+1)K_\varepsilon\theta}\mathbb{E}[h_\varepsilon(|X_0-\widetilde{X}_0|)]\\
		&+\sup\limits_{r>0}\frac{r}{h(r)+\varepsilon r}\frac{\Gamma_1\theta^{\alpha}+\Gamma_2\theta^{\frac{\beta}{2}}}{K_\varepsilon}\\
		\leq&\sup\limits_{r>0}\frac{r}{h(r)+\varepsilon r}\sup\limits_{r>0}\frac{h(r)+\varepsilon r}{r}e^{-(k+1)K_\varepsilon\theta}\mathbb{E}[|X_0-\widetilde{X}_0|]\\
		&+\sup\limits_{r>0}\frac{r}{h(r)+\varepsilon r}\frac{\Gamma_1\theta^{\alpha}+\Gamma_2\theta^{\frac{\beta}{2}}}{K_\varepsilon}.
	\end{align*}
	As $X_0=\widetilde{X}_0,$ letting $A_\varepsilon:=\sup\limits_{r>0}\frac{r}{h(r)+\varepsilon r}\sup\limits_{r>0}\frac{h(r)+\varepsilon r}{r}$, we have
	\begin{equation*}
		W_1(\mu_{X_{(k+1)\theta}},\mu_{\widetilde{X}_{(k+1)\theta}})\leq\sup\limits_{r>0}\frac{r}{h(r)+\varepsilon r}\frac{\Gamma_1\theta^{\alpha}+\Gamma_2\theta^{\frac{\beta}{2}}}{K_\varepsilon}.
	\end{equation*}

	More generally, if $T\in[k\theta,(k+1)\theta],$ the same argument on the last subinterval $[k\theta,T]$ yields
	\begin{equation*}
		W_1(\mu_{X_{T}},\mu_{\widetilde{X}_{T}})\leq\sup\limits_{r>0}\frac{r}{h(r)+\varepsilon r}\frac{\Gamma_1\theta^{\alpha}+\Gamma_2\theta^{\frac{\beta}{2}}}{K_\varepsilon},
	\end{equation*}
	because $T-k\theta\leq\theta$ and hence the same one-step bound applies.
\end{proof}
\begin{lem}[\blue{Contraction toward the invariant measure}]\label{singlee2}
	Under the same assumptions and notations as in Theorem~\ref{mainre1}, for any $t\geq0$,
	\[
	W_1(\mu_{X_t},\pi)\leq A_\varepsilon e^{-K_\varepsilon t}W_1(\mu_{X_0},\pi).
	\]
\end{lem}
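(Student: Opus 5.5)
We couple two copies of the Langevin diffusion \eqref{singleparticle} by reflection and control the concave-type distance $h_\varepsilon(r)=h(r)+\varepsilon r$. Concretely, fix an initial coupling $(X_0,Y_0)$ with $X_0\sim\mu_{X_0}$, $Y_0\sim\pi$ that is (nearly) optimal for $W_1$. We then run the same approximate reflection coupling as in \eqref{coupling}, but with the exact drift $b$ in both equations: $dX_t=\sqrt2[\lambda_\delta(r_t)dB^1_t+\pi_\delta(r_t)dB^2_t]+b(X_t)dt$ and $dY_t=\sqrt2[\lambda_\delta(r_t)R_tdB^1_t+\pi_\delta(r_t)dB^2_t]+b(Y_t)dt$, where $r_t=|X_t-Y_t|$. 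Because $\pi$ is invariant, $Y_t\sim\pi$ for all $t$. Hence $W_1(\mu_{X_t},\pi)\le\mathbb E r_t$.

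The computation is exactly that of Proposition~\ref{singlee}, but without the remainder $R_t^\varepsilon$. By \cite[Lemma~7]{DEGZ20} and \eqref{dissipativity-single}, we get $dr_t\le\beta_0(r_t)dt+2\sqrt2\lambda_\delta(r_t)d\tilde B_t$; on $\{r_t=0\}$ the processes coincide, so no drift term arises. Apply It\^o's formula to $e^{K_\varepsilon t}h_\varepsilon(r_t)$ and use the Poisson equation \eqref{h}, (A4), and $h_\varepsilon(r)\le(\|h'\|_\infty+\varepsilon)r$. With $K_\varepsilon=(1-\varepsilon M)/(\|h'\|_\infty+\varepsilon)$, the drift is bounded by $e^{K_\varepsilon t}(1-\lambda_\delta^2(r_t))(r_t+\beta_0^+(r_t)h'(r_t))\le e^{K_\varepsilon t}\delta(1+M^+\|h'\|_\infty)$. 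The martingale part has bounded integrand, so taking expectations gives
\[
\mathbb E h_\varepsilon(r_t)\le e^{-K_\varepsilon t}\mathbb E h_\varepsilon(r_0)+\frac{\delta(1+M^+\|h'\|_\infty)}{K_\varepsilon}.
\]

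Let $\delta\to0$. We need that $\mathbb E r_t$ is controlled uniformly along the family; this follows from the moment bounds implied by the dissipativity in Remark~2.2, and a lower semicontinuity (liminf) argument as in the proof of Proposition~\ref{singlee} suffices. Comparing $r$ with $h_\varepsilon(r)$ through $\sup_r r/h_\varepsilon(r)$ and $\sup_r h_\varepsilon(r)/r$ then gives $\mathbb E r_t\le A_\varepsilon e^{-K_\varepsilon t}\mathbb E r_0$. Optimizing over initial couplings yields the claim.

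The main technical point is justifying the coupling when $b$ is only locally H\"older. Well-posedness of the coupled SDE follows as remarked after \eqref{coupling}: the cutoff diffusion coefficients are Lipschitz, and localization together with the Lyapunov bound handles the drift. The uniform integrability needed for $\delta\to0$ comes from the second-moment bounds of both marginals. Alternatively, the bound can be quoted directly from \cite[Corollary~2.9]{LWZ21} with $G=0$.
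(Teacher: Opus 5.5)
Your proof is correct and is essentially the paper's: a reflection coupling of two copies of the Langevin diffusion, It\^o's formula for $e^{K_\varepsilon t}h_\varepsilon(r_t)$ with the Poisson equation for $h$ and (A4), then comparison of $r$ with $h_\varepsilon$ through $A_\varepsilon$. The only difference is that the paper uses exact reflection until the coupling time $\tau_c$ (after which $Y_t=X_t$), so no $\delta$-error appears. With your approximate coupling, sending $\delta\to0$ needs no uniform integrability or moment bound: each $\delta$-coupling already has marginals $\mu_{X_t}$ and $\pi$, so $W_1(\mu_{X_t},\pi)\le \sup_{r>0}\frac{r}{h_\varepsilon(r)}\bigl(e^{-K_\varepsilon t}\mathbb E h_\varepsilon(r_0)+\delta(1+M^+\|h'\|_\infty)/K_\varepsilon\bigr)$, and the left-hand side does not depend on $\delta$.
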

\begin{proof}
	\blue{We construct a reflection coupling between $X_t$ and $Y_t$:}
	\begin{equation*}
		\begin{split}
			&dY_t=\sqrt{2}R_tdB_t+b(Y_{t})dt,0<t<\tau_c,\ \ Y_t=X_t,t\ge\tau_c,\ Y_0 \sim \mu_{Y_0}
		\end{split}
	\end{equation*}
	where $\tau_c:=\inf\left\{t>0:X_t=Y_t\right\},Z_t:=X_t-Y_t,r_t:=|Z_t|,e_t=\frac{Z_t}{|Z_t|},R_t:=I_d-2e_te_t^T.$ By It\^o's formula,  we have
	\begin{equation*}
		dr_t=1_{\left\{r_t>0\right\}}\langle e_t,b(X_t)-b(Y_{t})\rangle dt+2\sqrt{2}1_{\left\{r_t>0\right\}}d\tilde{B}_t,
	\end{equation*}
	where $d\tilde{B}_t=e_t^TdB_t.$

	Then,	
	\begin{align*}
		{\color{black}d\big(e^{K_\varepsilon t}h_\varepsilon(r_t)\big)}=&1_{\left\{r_t>0\right\}}e^{K_\varepsilon t}[K_\varepsilon h_\varepsilon(r_t)+(h'(r_t)+\varepsilon)\langle e_t,b(X_t)-b(Y_t)\rangle+4h''(r_t)]dt\\
		&+2\sqrt{2}e^{K_\varepsilon t}1_{\left\{r_t>0\right\}}(h'(r_t)+\varepsilon)d\tilde{B}_t.
	\end{align*}
	Omitting the details, which are the same as in the proof of Proposition \ref{singlee}, we obtain
	\begin{align*}
		\mathbb{E}[h_\varepsilon(r_{t})]\leq e^{-K_\varepsilon t}\mathbb{E}[h_\varepsilon(|X_{0}-Y_{0}|)].
	\end{align*}
	According to the definition of $W_1$,
	\begin{align*}
		W_1(\mu_{X_t},\mu_{Y_t})\leq&\mathbb{E}[|X_t-Y_t|]\leq\sup\limits_{r>0}\frac{r}{h(r)+\varepsilon r}\mathbb{E}[h_\varepsilon(|X_t-Y_t|)]\\
		\leq&e^{-K_\varepsilon t}\sup\limits_{r>0}\frac{r}{h(r)+\varepsilon r}\mathbb{E}[h_\varepsilon(|X_0-Y_0|)]\\
		\leq&e^{-K_\varepsilon t}\sup\limits_{r>0}\frac{r}{h(r)+\varepsilon r}\sup\limits_{r>0}\frac{h(r)+\varepsilon r}{r}\mathbb{E}[|X_0-Y_0|]\\
		=&e^{-K_\varepsilon t}A_\varepsilon\mathbb{E}[|X_0-Y_0|].
	\end{align*}
	\blue{Choose an optimal coupling of $X_0$ and $Y_0$ such that $\mu_{Y_0}=\pi$ and $\mathbb{E}|X_0-Y_0|=W_1(\mu_{X_0},\pi)$. Then}
	\begin{equation*}
		W_1(\mu_{X_t},\pi)\leq e^{-K_\varepsilon t}A_\varepsilon W_1(\mu_{X_0},\pi).
	\end{equation*}
\end{proof}

\begin{proof}[\bf Proof of Theorem \ref{mainre1}]
	According to the triangle inequality,
	\begin{equation*}
		W_1(\mu_{\widetilde{X}_T},\pi)\leq W_1(\mu_{\widetilde{X}_T},\mu_{X_T})+W_1(\mu_{X_T},\pi).
	\end{equation*}
	By Proposition \ref{singlee} and  Lemma \ref{singlee2},  Theorem \ref{mainre1} follows.
\end{proof}
\begin{proof}[\bf Proof of Theorem \ref{thm:mainre2}]
	\blue{Let $V(x)=1+|x|^2$ and let $\widetilde{P}_\theta$ denote the one-step transition kernel of the Markov chain $\left\{\widetilde{X}_{k\theta}\right\}_{k\in\mathbb{N}}$. Then, by {\bf (A2)} and \eqref{prop1},}
	\begin{align*}
		\widetilde{P}_\theta V(x)= &\mathbb{E}[V(x+\theta b_\alpha(x)+\sqrt{2\theta}\xi_1)]\\
		=&|x|^2+2\theta\langle x,b_\alpha(x)\rangle+2\theta d+\theta^2|b_\alpha(x)|^2+1\\
		\le&|x|^2-2c_1\theta|x|^2+2c_2\theta+2\theta d+8K^2\theta^{2-2\alpha}(1+|x|^2)+1\\
		=&(1-2c_1\theta+8K^2\theta^{2-2\alpha})|x|^2+2c_2\theta+2\theta d+8K^2\theta^{2-2\alpha}+1.
	\end{align*}
	When $\theta<\eta,$ we have
	\begin{align*}
		\widetilde{P}_\theta V(x)\le&r_1|x|^2+2c_2\theta+2\theta d+8K^2\theta^{2-2\alpha}+1\\
		\le&r_1V(x)+C,
	\end{align*}
	where $r_1$ is given by \eqref{r1} and $C=2c_2\theta+2\theta d+8K^2\theta^{2-2\alpha}+1-r_1$. Taking $\mathscr{C}:=\left\{x:V(x)\le\frac{2C}{1-r_1}\right\}$, we have
	\begin{align*}
		\widetilde{P}_\theta V(x)\le&r_1V(x)+C\\
		=&(r_1-1)V(x)+C+V(x)\\
		=&[(r_1-1)V(x)+C]{1}_{\mathscr{C}}+[(r_1-1)V(x)+C]{1}_{\mathscr{C}^c}+V(x)\\
		\le&C{1}_{\mathscr{C}}+\frac{1}{2}(r_1-1)V(x)+V(x).
	\end{align*}
	Therefore,
	\begin{align*}
		\widetilde{P}_\theta V(x)\le\frac12(1+r_1)V(x)+C{1}_{\mathscr{C}}.
	\end{align*}
	\blue{Since $r_1\in(0,1)$, we have $\frac12(1+r_1)\in(0,1)$. For each fixed $\theta>0$, the nondegenerate Gaussian noise ensures that the one-step transition kernel $\widetilde{P}_{\theta}$ admits a strictly positive, jointly continuous density with respect to Lebesgue measure. Consequently, the Markov chain $\{\widetilde{X}_{k\theta}\}_{k\in\mathbb{N}_0}$ is Lebesgue-irreducible and aperiodic, and every compact set is small. Therefore, Theorem~16.0.1 of Meyn and Tweedie \cite{MT1993} implies that the chain admits a unique invariant probability measure and that there exist constants $\rho\in(0,1)$ and $C>0$ such that, for every $k\in\mathbb{N}$,}
	\begin{align*}
		\sup\limits_{|f|\le V}|\widetilde{P}_\theta^kf(x)-\pi^{\rm TEM}_\theta(f)|\le C\rho^kV(x).
	\end{align*}
	\blue{For a $1$-Lipschitz function $f$ with $f(0)=0$, we have}
	\begin{align*}
		|f(x)|\le |x|\le 1+|x|^2=V(x).
	\end{align*}
	Therefore, by Kantorovich--Rubinstein duality,
	\begin{align*}
		W_1(\mu_{\widetilde{X}_{k\theta}},\pi^{\rm TEM}_\theta)\le C\rho^k\mu_0(V).
	\end{align*}
	Then, from the triangle inequality and Theorem \ref{mainre1}, for any $k\in\mathbb{N},$ we have
	\begin{align*}
		W_1(\pi^{\rm TEM}_\theta,\pi)\le& W_1(\pi^{\rm TEM}_\theta,\mu_{\widetilde{X}_{k\theta}})+W_1(\mu_{\widetilde{X}_{k\theta}},\pi)\\
		\le&C\rho^k\mu_0(V)+\mathcal E_{\rm relax}^{\rm Lan}(k\theta)+\mathcal E_{\rm disc}^{\rm Lan}(\theta).
	\end{align*}
	From Lemma \ref{pri}, $\mu_0(V)<\infty.$ Letting $k\to\infty,$ we complete the proof. 
\end{proof}

\section{Proof of the Main Result for the Interacting Particle System and the McKean--Vlasov Equation}

In this section, we primarily present the proof of {\bf Theorem \ref{mainre2}}. The McKean--Vlasov result is then derived from the particle-system estimate together with known propagation-of-chaos bounds.


\subsection{Proof of the Main Result for the Interacting Particle System}

	\begin{lem}\label{primul}
Under the same assumptions and notations as in Theorem~\ref{mainre2}, set
\[
\widetilde\kappa_\alpha:=
\begin{cases}
c_1-\|\nabla_xG\|_\infty-\sqrt2K_G,
&\alpha\in(0,\frac12),\\
c_1-\|\nabla_xG\|_\infty-\sqrt2K_G-8K^2,
&\alpha=\frac12,
\end{cases}
\]
and
\[
D_{\rm P}:=2c_2+16K^2+6K_G^2+\sqrt2K_G.
\]
Then, for every $r\in[2,q]$, the moment constants defined in
\eqref{moment-inter-Npartical} satisfy
\begin{equation}\label{eq:moment-particle-explicit}
\widetilde C_r\le
\left[
\max\left\{m_r^{2/r},
\frac{D_{\rm P}+2(d+r-2)}{\widetilde\kappa_\alpha}\right\}
\right]^{r/2}.
\end{equation}
For $0<r<2$, one has $\widetilde C_r\le\widetilde C_2^{r/2}$.
In particular, $\widetilde C_r<\infty$ for every $r\in(0,q]$,
uniformly in $N\ge2$ and $0<\theta<\widetilde\eta$.
\end{lem}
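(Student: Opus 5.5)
The plan is to establish a one-step recursion for $a_k:=\mathbb E|\widetilde X^{1,N}_{k\theta}|^r$ at grid times, of the form $a_{k+1}\le(1-\theta\rho)a_k+\theta\rho\,S^{r/2}$ with $\rho>0$ independent of $N,\theta,k$. Induction together with $a_0=m_r$ then gives $a_k\le\max\{m_r,S^{r/2}\}$, where $S:=(D_{\rm P}+2(d+r-2))/\widetilde\kappa_\alpha$. By exchangeability of the particles (i.i.d.\ initial data, independent noises, symmetric scheme), every particle has the same moments. So it suffices to bound a single coordinate, and we may use $\mathbb E|\widetilde X^{j,N}_{k\theta}|^r=a_k$ whenever cross terms appear. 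Non-grid times $t\in[k\theta,(k+1)\theta]$ are handled by the same computation with $\theta$ replaced by $t-k\theta\le\theta$, because the interpolation \eqref{interpolation} has exactly the form of one step with smaller step size. For $0<r<2$ the claim follows from Jensen/Lyapunov, $\mathbb E|X|^r\le(\mathbb E|X|^2)^{r/2}$.

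For the one-step estimate, write $x=\widetilde X^{1,N}_{k\theta}$, the drift $\Delta:=\theta\bigl(b_\alpha(x)+\frac1{N-1}\sum_{j\ne1}G(x,\widetilde X^{j,N}_{k\theta})\bigr)$, and the noise $\zeta:=\sqrt2(B^1_{(k+1)\theta}-B^1_{k\theta})$. First treat $r=2$. Expand
\[
|x+\Delta+\zeta|^2=|x|^2+2\langle x,\Delta\rangle+|\Delta|^2+2\langle x+\Delta,\zeta\rangle+|\zeta|^2 .
\]
The cross term with $\zeta$ has zero conditional mean, and $\mathbb E|\zeta|^2=2d\theta$. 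The term $\langle x,b_\alpha(x)\rangle$ is controlled by \textbf{(A2)}. For $\langle x,G(x,y)\rangle$, write $G(x,y)=G(0,y)+(G(x,y)-G(0,y))$. This gives $\langle x,G(x,y)\rangle\le\|\nabla_xG\|_\infty|x|^2+K_G|x|(1+|y|)$, and Young's inequality, after averaging over $y$ and using exchangeability, bounds the second piece by multiples of $K_G(|x|^2+1)$; the constant $\sqrt2K_G$ appears from a suitable Young split. For $|\Delta|^2$ use $|b_\alpha|\le2K\theta^{-\alpha}(1+|x|)$ from \eqref{prop1} and \eqref{Glip}, which gives $|\Delta|^2\le\theta^{2-2\alpha}(16K^2+\ldots)(1+|x|^2)+\theta^2\cdot6K_G^2(\ldots)$. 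When $\alpha<\frac12$, the factor $\theta^{1-2\alpha}$ is made small by the step restriction $\widetilde\eta$ (the third entry in its definition), so it is absorbed into half of the dissipation. When $\alpha=\frac12$, it is absorbed directly, which is why $8K^2$ is subtracted in $\widetilde\kappa_\alpha$ and \textbf{(A5')} is needed. This yields $a^{(2)}_{k+1}\le(1-\theta\widetilde\kappa_\alpha)a^{(2)}_k+\theta(D_{\rm P}+2d)$, and the constraint $\theta\widetilde\kappa_\alpha<1$ comes from the last entries of $\widetilde\eta$.

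For even $r>2$, I will proceed by induction on $r$ in steps of $2$, using the $r=2$ computation as the base. The standard trick is to apply the function $u\mapsto u^{r/2}$ to $|x|^2+\Theta$, where $\Theta$ collects the increment above, and expand binomially. The first-order term reproduces the quadratic drift estimate multiplied by $\frac r2|x|^{r-2}$. Second-order terms contain $\mathbb E[\langle x,\zeta\rangle^2\mid x]\le2\theta|x|^2$, which produces the $(r-2)$ correction in $2(d+r-2)$. Higher-order terms are $O(\theta^{3/2})$ or smaller times lower moments, and are absorbed via Young into $\theta|x|^r$ and constants, using the smallness of $\theta$ guaranteed by $\theta<\widetilde\eta\le\frac14$. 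After dividing, the recursion reads $a_{k+1}\le a_k-\theta\frac r2\widetilde\kappa_\alpha a_k^{?}+\ldots$. To close it, use $\mathbb E|x|^{r-2}\le a_k^{(r-2)/r}$ and the monotonicity argument: if $a_k\le S^{r/2}$ then $a_{k+1}\le S^{r/2}$, and otherwise $a_{k+1}\le a_k$. This is exactly what produces the $\max\{m_r^{2/r},S\}^{r/2}$ form. Non-even $r\in[2,q]$ then follows by interpolation/Jensen between even orders, or directly since the same expansion works for real $r\ge2$ via Taylor's formula with remainder.

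The main obstacle is the higher-order expansion for $r>2$ with \emph{exact} constants: one must check that all terms of order $\theta^{3/2}$ and beyond, including the mixed interaction–noise terms and the $\theta^{2-2\alpha}$ taming terms, are genuinely dominated within the stated $\widetilde\kappa_\alpha$ and $D_{\rm P}$ under the restriction $\theta<\widetilde\eta$, uniformly in $N$. The uniformity in $N$ itself is harmless, because exchangeability reduces all empirical averages to single-particle moments. I would first try to organise the computation as a discrete Itô-type inequality $\mathbb E|x+\Delta+\zeta|^r\le\mathbb E|x|^r+\theta\,\mathbb E\bigl[r|x|^{r-2}(\langle x,\Delta/\theta\rangle+d+r-2)\bigr]+\text{(absorbable)}$, proved via the conditional Gaussian moments of $\zeta$ and the linear-growth bounds \eqref{prop1}, \eqref{Glip}.
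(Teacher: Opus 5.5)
Your treatment of $r=2$, the reduction to one particle by exchangeability, the non-grid times (using $s^2\theta^{-2\alpha}\le s\theta^{1-2\alpha}$, since $b_\alpha$ is tamed with $\theta$, not $s$), and the case $0<r<2$ are all fine. Your $r=2$ computation is exactly the paper's quadratic substep bound with coefficients $a_s,b_s,c_s$.

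The gap is in the passage to $r>2$. Your claim that all higher-order terms in the binomial expansion of $(|x|^2+\Theta)^{r/2}$ are $O(\theta^{3/2})$ is false. Since $\Theta$ contains the deterministic piece $2\langle x,\Delta\rangle$, the second-order term contains
\[
\binom{r/2}{2}|x|^{r-4}\,4\langle x,\Delta\rangle^2 .
\]
Your only information on $\langle x,b_\alpha(x)\rangle$ is the one-sided bound \textbf{(A2)} together with $|b_\alpha(x)|\le 2K\theta^{-\alpha}(1+|x|)$. So the $b_\alpha$-part of this nonnegative term can only be bounded by $2r(r-2)K^2\theta^{2-2\alpha}|x|^{r-2}(1+|x|)^2$, which is a contribution of order $\theta^{2-2\alpha}$ to the \emph{top} moment.

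\begin{itemize}
\item For $\alpha=\frac12$ this term is of order $\theta$, the same order as the dissipation $\approx -r\widetilde\kappa_{1/2}\theta|x|^r$ from the first-order term, and its constant grows with $r$. You would need a condition of the type $\widetilde\kappa_{1/2}>c\,(r-2)K^2$, whereas \textbf{(A5')} only gives $\widetilde\kappa_{1/2}>0$.
\item For $\alpha<\frac12$ it can be absorbed only under an extra, $r$-dependent smallness condition on $\theta^{1-2\alpha}$. But $\widetilde\eta$ does not depend on $r$ and leaves no slack: the corresponding entry of $\widetilde\eta$ is exactly what yields $a_s+b_s\le 1-\widetilde\kappa_\alpha s$.
\item Every Young absorption of the mixed or lower-moment terms changes $\widetilde\kappa_\alpha$ or $D_{\rm P}+2(d+r-2)$. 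That destroys the exact fixed point $S$ your monotonicity argument relies on.
\end{itemize}

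So the plan gives at best finiteness, under stronger hypotheses, and not \eqref{eq:moment-particle-explicit}. Non-even $r$ has the same problem. Interpolation gives $\widetilde C_r\le \widetilde C_{2m+2}^{\,r/(2m+2)}$ for $r\in(2m,2m+2)$, which involves $m_{2m+2}$ and $d+2m$ rather than $m_r$ and $d+r-2$.

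The missing idea is to never expand the drift step, and to track $U_r^N(t):=(\mathbb E|\widetilde X_t^{1,N}|^r)^{2/r}$ instead of $\mathbb E|\widetilde X_t^{1,N}|^r$. Split each step into the deterministic map $\Phi^1_{\theta,s}$ and the Gaussian increment, which is independent of $\mathscr F_{k\theta}$.

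\textbf{Drift part.} Your pathwise inequality $|\Phi^1_{\theta,s}(x)|^2\le a_s|x^1|^2+\frac{b_s}{N-1}\sum_{j\ne1}|x^j|^2+c_s$ is raised to the power $r/2$ by monotonicity, not expanded. Taking $L^{r/2}$ norms, Minkowski's inequality and exchangeability give, linearly and without loss,
\[
\bigl(\mathbb E|\Phi^1_{\theta,s}(\widetilde X^N_{k\theta})|^r\bigr)^{2/r}\le (a_s+b_s)U_r^N(k\theta)+c_s\le(1-\widetilde\kappa_\alpha s)U_r^N(k\theta)+D_{\rm P}s .
\]

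\textbf{Noise part.} The noise is added through the bound $(\mathbb E|Y+\sqrt{2s}Z|^r)^{2/r}\le(\mathbb E|Y|^r)^{2/r}+2(d+r-2)s$. For fixed $y$, It\^o's formula for $F_y(s)=\mathbb E|y+\sqrt2B_s|^r$ gives $F_y'\le r(d+r-2)F_y^{1-2/r}$, hence $\frac{d}{ds}F_y^{2/r}\le 2(d+r-2)$. Conditioning on $Y$ and applying Minkowski in $L^{r/2}$ then gives the bound.

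Together these yield
\[
U_r^N(k\theta+s)\le(1-\widetilde\kappa_\alpha s)U_r^N(k\theta)+\bigl(D_{\rm P}+2(d+r-2)\bigr)s
\]
for every real $r\in[2,q]$ and $0\le s\le\theta$, with exactly the stated constants and no higher-order terms. Iterating this linear recursion gives \eqref{eq:moment-particle-explicit} directly. Your ``discrete It\^o inequality'' is the right instinct, but it must be applied to the Gaussian increment alone, after the drift step has been controlled pathwise.
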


\begin{lem}\label{primu2}
	Assume the same assumptions and notations as in Theorem~\ref{mainre2}. Then, for any $k\in\mathbb N$ and $t\in[k\theta,(k+1)\theta]$,
	\begin{equation*}
		\sum_{i=1}^N\mathbb{E}[|\widetilde{X}^{i,N}_t-\widetilde{X}^{i,N}_{k\theta}|^2]\leq N\theta(2d+16K^2+6K_G^2 +16K^2\widetilde{C}_2+12K_G^2\widetilde{C}_2).
	\end{equation*}
\end{lem}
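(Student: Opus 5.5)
Fix $k\in\mathbb N$, $t\in[k\theta,(k+1)\theta]$ and a particle index $i$. I would start from the interpolation \eqref{interpolation}, which gives
\[
\widetilde X^{i,N}_t-\widetilde X^{i,N}_{k\theta}=(t-k\theta)\Big(b_\alpha(\widetilde X^{i,N}_{k\theta})+\frac1{N-1}\sum_{j\neq i}G(\widetilde X^{i,N}_{k\theta},\widetilde X^{j,N}_{k\theta})\Big)+\sqrt2\,(B^i_t-B^i_{k\theta}).
\]
The Brownian increment is independent of $\mathscr F_{k\theta}$ and has mean zero, so the cross term vanishes in expectation. This leaves
\[
\mathbb E|\widetilde X^{i,N}_t-\widetilde X^{i,N}_{k\theta}|^2\le (t-k\theta)^2\,\mathbb E\Big|b_\alpha(\cdot)+\tfrac1{N-1}\textstyle\sum_{j\ne i}G(\cdot,\cdot)\Big|^2+2d(t-k\theta).
\]
The noise contributes $2d\theta$, since $t-k\theta\le\theta$.

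For the drift part, I would use $|a+c|^2\le2|a|^2+2|c|^2$.

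For the taming term, I apply \eqref{prop1}: $|b_\alpha(x)|^2\le 4K^2\theta^{-2\alpha}(1+|x|)^2\le 8K^2\theta^{-2\alpha}(1+|x|^2)$. Multiplying by $2\theta^2$ gives $16K^2\theta^{2-2\alpha}(1+|x|^2)$. Since $\alpha\le1/2$ and $\theta<\widetilde\eta\le1/4<1$, we have $\theta^{2-2\alpha}\le\theta$. Taking expectations and using $\mathbb E|\widetilde X^{i,N}_{k\theta}|^2\le\widetilde C_2$ from Lemma~\ref{primul} (exchangeability makes the bound the same for every $i$), this piece is at most $\theta(16K^2+16K^2\widetilde C_2)$.

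For the interaction term, Jensen over the average and \eqref{Glip} give $|G(x,y)|^2\le 3K_G^2(1+|x|^2+|y|^2)$. Multiplying by $2\theta^2\le 2\theta$ yields $6K_G^2\theta(1+\mathbb E|\widetilde X^{i}_{k\theta}|^2+\mathbb E|\widetilde X^{j}_{k\theta}|^2)\le\theta(6K_G^2+12K_G^2\widetilde C_2)$.

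Adding the three contributions gives, for each $i$,
\[
\mathbb E|\widetilde X^{i,N}_t-\widetilde X^{i,N}_{k\theta}|^2\le\theta\,(2d+16K^2+6K_G^2+16K^2\widetilde C_2+12K_G^2\widetilde C_2).
\]
Summing over $i=1,\dots,N$ gives the factor $N$ in the statement.

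The only delicate point is the step-size bookkeeping: using $\theta<1$ to absorb $\theta^{2-2\alpha}$ and $\theta^2$ into $\theta$. Beyond that, the argument relies on the uniform-in-$N$ second moment bound $\widetilde C_2<\infty$ from Lemma~\ref{primul}, which requires $q\ge2$ and $\theta<\widetilde\eta$.
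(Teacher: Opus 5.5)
Your proposal is correct and is essentially the paper's argument: the paper also expands the interpolation \eqref{interpolation}, drops the Brownian cross term, and splits the drift with $|a+c|^2\le 2|a|^2+2|c|^2$. It then uses \eqref{prop1}, Jensen with \eqref{Glip}, the bound $\widetilde C_2$, and $\theta^{2-2\alpha},\theta^2\le\theta$. The only cosmetic difference is that the paper carries the sum over $i$ throughout instead of bounding each particle and then summing.
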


\blue{The preceding two lemmas yield the following distance estimate between the discrete and continuous processes.}
\begin{prop}\label{multi}
	Assume the same assumptions and notations as in Theorem~\ref{mainre2}. Then, for any $\varepsilon>0$ such that $1-\varepsilon M-(\|h'\|_\infty+\varepsilon)\|\nabla_yG\|_\infty>0$ and any $T>0$,
	we have
	\begin{align*}
		W_{1,d_{l^1}}(\mu_{X^N_{T}},\mu_{\widetilde{X}^N_{T}})\leq\sup\limits_{r>0}\frac{r}{h(r)+\varepsilon r}\frac{N(\Gamma_3\theta^{\alpha}+\Gamma_4\theta^{\frac{\beta}{2}}+\Gamma_5\theta^{\frac{1}{2}})}{\widetilde{K}_\varepsilon},
	\end{align*}
	\blue{where $\mu_{X^N_T}$ denotes the law of $X^N_T=(X_T^{1,N},\ldots,X_T^{N,N})$, the solution of \eqref{multiparticle}.}
\end{prop}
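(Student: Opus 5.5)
The plan is to mirror the proof of Proposition~\ref{singlee}, now componentwise. On each grid interval $[k\theta,(k+1)\theta]$ I couple the particle system \eqref{multiparticle} with the interpolation \eqref{interpolation} by an approximate componentwise reflection coupling. For each $i$, with $Z^i_t:=X^{i,N}_t-\widetilde X^{i,N}_t$, $r^i_t:=|Z^i_t|$, $e^i_t=Z^i_t/r^i_t$ and $R^i_t=I_d-2e^i_t(e^i_t)^T$, the noise of particle $i$ in the exact system is $\sqrt2[\lambda_\delta(r^i_t)dB^{i,1}_t+\pi_\delta(r^i_t)dB^{i,2}_t]$. The numerical system uses the same expression with $R^i_t$ inserted in front of $dB^{i,1}_t$. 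All Brownian motions are independent, so L\'evy's characterization shows this is a coupling.

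As in Proposition~\ref{singlee}, the It\^o--Tanaka argument of \cite[Lemma~7]{DEGZ20} gives
\[
dr^i_t\le \langle e^i_t,\Delta^i_t\rangle dt+2\sqrt2\lambda_\delta(r^i_t)d\tilde B^i_t .
\]
I split $\Delta^i_t$, the difference of the drifts, telescopically. The first piece is $b(X^{i,N}_t)-b(\widetilde X^{i,N}_t)+\frac1{N-1}\sum_{j\ne i}[G(X^{i,N}_t,X^{j,N}_t)-G(\widetilde X^{i,N}_t,X^{j,N}_t)]$. By \eqref{dissipativity} with $z=X^{j,N}_t$, averaged over $j$, this piece contributes at most $\beta_0(r^i_t)$. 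Next comes $\frac1{N-1}\sum_{j\ne i}[G(\widetilde X^{i,N}_t,X^{j,N}_t)-G(\widetilde X^{i,N}_t,\widetilde X^{j,N}_t)]$, which is bounded by $\frac{\|\nabla_yG\|_\infty}{N-1}\sum_{j\ne i}r^j_t$. The remaining pieces are the time-discretization errors $|b(\widetilde X^{i,N}_t)-b(\widetilde X^{i,N}_{k\theta})|$ and $\frac1{N-1}\sum_{j\ne i}|G(\widetilde X^{i,N}_t,\widetilde X^{j,N}_t)-G(\widetilde X^{i,N}_{k\theta},\widetilde X^{j,N}_{k\theta})|$, the latter at most $\|\nabla_xG\|_\infty|\widetilde X^{i,N}_t-\widetilde X^{i,N}_{k\theta}|+\frac{\|\nabla_yG\|_\infty}{N-1}\sum_{j\ne i}|\widetilde X^{j,N}_t-\widetilde X^{j,N}_{k\theta}|$. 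The last piece is the taming error $|b-b_\alpha|(\widetilde X^{i,N}_{k\theta})$. The case $r^i_t=0$ is handled exactly as in \eqref{eq:rt=0}.

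Next I apply It\^o's formula to
\[
H_t:=e^{\widetilde K_\varepsilon(t-k\theta)}\sum_{i=1}^N h_\varepsilon(r^i_t),\qquad h_\varepsilon(r)=h(r)+\varepsilon r .
\]
Using \eqref{h} and (A4), the drift part is at most $e^{\widetilde K_\varepsilon(t-k\theta)}\sum_i[\widetilde K_\varepsilon h_\varepsilon(r^i)-r^i+\varepsilon Mr^i]$, plus the cutoff error $\delta(1+M^+\|h'\|_\infty)$ per particle. The interaction-mismatch term, after summing over $i$ and using $h'+\varepsilon\le\|h'\|_\infty+\varepsilon$, is at most $(\|h'\|_\infty+\varepsilon)\|\nabla_yG\|_\infty\sum_j r^j$, because each $r^j$ appears $N-1$ times with weight $1/(N-1)$. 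I then use $h_\varepsilon(r)\le(\|h'\|_\infty+\varepsilon)r$. The coefficient of $\sum_i r^i$ becomes $\widetilde K_\varepsilon(\|h'\|_\infty+\varepsilon)-1+\varepsilon M+(\|h'\|_\infty+\varepsilon)\|\nabla_yG\|_\infty$, and this vanishes for the stated $\widetilde K_\varepsilon$. This absorption step is the crux: it is where (H) and the $\varepsilon$-condition enter, and where uniformity in $N$ comes from.

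For the remainder terms I take expectations and use H\"older together with Lemmas~\ref{primul} and \ref{primu2}. Exactly as in \eqref{Rt2}, the H\"older term $b$ gives $\Gamma_4\theta^{\beta/2}$ per particle, and the taming term gives $\Gamma_3\theta^\alpha$ per particle via $\widetilde C_{2\varkappa+1}$. For the $G$-discretization, Cauchy--Schwarz and Lemma~\ref{primu2} (summed over particles, with exchangeability giving per-particle control) give $(\|\nabla_xG\|_\infty+\|\nabla_yG\|_\infty)\sqrt{\theta(\cdots)}$, i.e.\ $\Gamma_5\theta^{1/2}$. In total this is $N$ times the rates.

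Finally I integrate over the grid interval, iterate in $k$ as in Proposition~\ref{singlee}, let $\delta\to0$, use $X^N_0=\widetilde X^N_0$, and convert via $\sum_i r^i\le\sup_r\frac{r}{h_\varepsilon(r)}\sum_i h_\varepsilon(r^i)$. For a general $T$, I use the final partial interval.

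The main obstacle I anticipate is making the $N$-uniform absorption rigorous while the telescoping introduces mixed terms $G(\widetilde X^i,X^j)$. I also need the Lemma~\ref{primu2} estimate to be applied per particle, which requires exchangeability of the initial law and the scheme, so that the $\theta^{1/2}$ term is not worsened.
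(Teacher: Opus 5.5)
Your proposal is correct and follows the paper's proof essentially step for step. The shared steps are: the componentwise approximate reflection coupling on each grid interval; It\^o's formula for $e^{\widetilde K_\varepsilon(t-k\theta)}\sum_i h_\varepsilon(r^i_t)$; absorbing the $\|\nabla_yG\|_\infty$ mismatch term into the choice of $\widetilde K_\varepsilon$; the H\"older and moment bounds from Lemmas~\ref{primul} and \ref{primu2}; and iteration in $k$ followed by $\delta\to0$.

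The differences are cosmetic and give the same constants. You apply \eqref{dissipativity} with $z=X^{j,N}_t$ and then swap the second argument of $G$, whereas the paper swaps first and applies \eqref{dissipativity} with $z=\widetilde X^{j,N}_t$. You also do the H\"older estimate per particle via exchangeability, while the paper does it on the sum over $i$.
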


\begin{proof}
	Fix an integer $k\ge0$, $t\in[k\theta,(k+1)\theta]$, and $\delta>0$. We construct a coupling of $X_t^{i,N}$ and $\widetilde X_t^{i,N}$ similarly to that in the proof of Proposition~\ref{singlee}. The functions $\lambda_\delta$ and $\pi_\delta$ are defined by \eqref{def-para1} and \eqref{def-para2}.
	
	A coupling of the solutions to \eqref{multiparticle} and \eqref{interpolation}, with initial values $X^{i,N}_{k\theta},\widetilde{X}^{i,N}_{k\theta}\in\mathbb{R}^d$ for $1\leq i\leq N$, is given by a strong solution of the following system:
	\begin{equation}\label{copling-par}
		\begin{split}
			dX_t^{i,N}=&\sqrt{2}[\lambda_{\delta}(|Z_t^i|)dB_t^{1,i}+\pi_{\delta}(|Z_t^i|)dB_t^{2,i}]+b(X_t^{i,N})dt\\
			&+\frac{1}{N-1}\sum\limits_{j\neq i}G(X_{t}^{i,N},X_{t}^{j,N})dt\\
			d\widetilde{X}_{t}^{i,N}=&\sqrt{2}[\lambda_{\delta}(|Z_t^i|)R_t^idB_t^{1,i}+\pi_{\delta}(|Z_t^i|)dB_t^{2,i}]+b_{\alpha}(\widetilde{X}_{k\theta}^{i,N})dt\\
			&+\frac{1}{N-1}\sum\limits_{j\neq i}G(\widetilde{X}_{k\theta}^{i,N},\widetilde{X}_{k\theta}^{j,N})dt.
		\end{split}
	\end{equation}
	\blue{Here $Z_t^i:=X_t^{i,N}-\widetilde{X}_{t}^{i,N}$ and $R_t^i:=I_d-2e^i_t(e_t^i)^T$, where $I_d$ is the $d$-dimensional identity matrix and $e^i_t(e^i_t)^T$ is the orthogonal projection onto the unit vector $e_t^i:=Z_t^i/|Z_t^i|$ when $|Z_t^i|\neq0$. At $Z_t^i=0$, set $e_t^i=e_\star$, using the same fixed unit vector for every particle. As in the single-particle construction, the cutoff makes the diffusion coefficients Lipschitz; (A4)--(A5) give the required one-sided drift bound. Concatenation from $k=0$ produces an exchangeable coupling on $[0,\infty)$.} Moreover, $B_t^1=(B_t^{1,i})_{1\leq i\leq N}$ and $B_t^2=(B_t^{2,i})_{1\leq i\leq N}$ are two independent standard Brownian motions in $(\mathbb{R}^d)^N$.
	
	By L\'evy's characterization,  for $1\leq i\leq N,$
	\begin{equation*}
		\begin{split}
			&\hat{B}_t^i:=\int_{0}^{t}\lambda_{\delta}(|Z_s^i|)dB_s^{1,i}+\int_{0}^{t}\pi_{\delta}(|Z_s^i|)dB_s^{2,i},\\
			&\check{B}_t^i:=\int_{0}^{t}\lambda_\delta(|Z_s^i|)R_s^idB_s^{1,i}+\int_{0}^{t}\pi_{\delta}(|Z_s^i|)dB_s^{2,i},
		\end{split}
	\end{equation*}
		\blue{are two standard Brownian motions in $\mathbb{R}^d$, so that $(X_t^{i,N},\widetilde{X}_{t}^{i,N})$ is indeed a coupling process.}
	
	Then by \eqref{copling-par},
	\begin{align*}
		dZ_t^i=&2\sqrt{2}\lambda_{\delta}(|Z_t^i|)e^i_td\tilde{B}^i_t+[b(X_t^{i,N})-b_{\alpha}(\widetilde{X}_{k\theta}^{i,N})]dt\\
		&+\frac{1}{N-1}\sum\limits_{j\neq i}[G(X_t^{i,N},X_t^{j,N})-G(\widetilde{X}_{k\theta}^{i,N},\widetilde{X}_{k\theta}^{j,N})]dt,
	\end{align*}
	\blue{where $d\tilde{B}^i_t=(e_t^i)^T dB_t^{1,i}$ and $\tilde B^i$, $1\leq i\leq N$, are one-dimensional standard Brownian motions.}

	Let $r_t^i=|Z_t^i|,1\leq i\leq N.$ By It\^o's formula,
	\begin{align*}
		1_{\left\{ r_t^i\neq0\right\}}dr_t^i=&1_{\left\{ r_t^i\neq0\right\}}2\sqrt{2}\lambda_\delta(r_t^i)d\tilde{B}_t^i \\
		&+1_{\left\{ r_t^i\neq0\right\}}\langle e_t^i,b(X_t^{i,N})-b_{\alpha}(\widetilde{X}_{k\theta}^{i,N})\rangle dt\\
		&+1_{\left\{ r_t^i\neq0\right\}}\langle e_t^i,\frac{1}{N-1}\sum\limits_{j\neq i}[G(X_t^{i,N},X_t^{j,N})\\
		&\qquad-G(\widetilde{X}_{k\theta}^{i,N},\widetilde{X}_{k\theta}^{j,N})]\rangle dt\\
		&+1_{\left\{ r_t^i\neq0\right\}}\sum_{k,l=1}^{d}[1_{\left\{k=l\right\}}(r_t^i)^{-1}\\
		&\qquad-(X_t^{i,N,k}-\widetilde{X}^{i,N,k}_{t})(X_t^{i,N,l}-\widetilde{X}_{t}^{i,N,l})(r_t^i)^{-3}]\notag\\
		&\qquad \times\lambda^2_{\delta}(r_t^i)(I_d-R_t^i)^2_{kl}dt,
	\end{align*}
	where $X_t^{i,N,k}$ and $\widetilde{X}^{i,N,k}$ denote the $k$-th coordinates of $X_t^{i,N}$ and $\widetilde{X}_t^{i,N}$, respectively. A direct calculation gives
	\begin{align*}
		&1_{\left\{ r_t^i\neq0\right\}}\sum_{k,l=1}^{d}[1_{\left\{k=l\right\}}(r_t^i)^{-1}\notag\\
		&\quad -(X_t^{i,N,k}-\widetilde{X}^{i,N,k}_{t})(X_t^{i,N,l}-\widetilde{X}_{t}^{i,N,l})(r_t^i)^{-3}]\\
		&\qquad\lambda^2_{\delta}(r_t^i)(I_d-R_t^i)^2_{kl}=0.
	\end{align*}
	In order to use \eqref{dissipativity}, we rewrite $dr_t^i$ as
	\begin{align}\label{esti-r1}
		1_{\left\{ r_t^i\neq0\right\}}dr_t^i=&1_{\left\{ r_t^i\neq0\right\}}2\sqrt{2}\lambda_\delta(r_t^i)d\tilde{B}_t^i\notag \\
		&+1_{\left\{ r_t^i\neq0\right\}}\langle e_t^i,b(X_t^{i,N})-b(\widetilde{X}_{t}^{i,N})\rangle dt\notag\\
		&+1_{\left\{ r_t^i\neq0\right\}}\langle e_t^i,b(\widetilde{X}_{t}^{i,N})-b_{\alpha}(\widetilde{X}_{k\theta}^{i,N})\rangle dt\notag\\
		&+1_{\left\{ r_t^i\neq0\right\}}\langle e_t^i,\frac{1}{N-1}\sum\limits_{j\neq i}[G(X_t^{i,N},X_t^{j,N})-G(X_t^{i,N},\widetilde{X}_t^{j,N})\notag\\
		&+G(X_t^{i,N},\widetilde{X}_t^{j,N})-G(\widetilde{X}_t^{i,N},\widetilde{X}_t^{j,N})\notag\\
		&+G(\widetilde{X}_t^{i,N},\widetilde{X}_t^{j,N})-G(\widetilde{X}_{t}^{i,N},\widetilde{X}_{k\theta}^{j,N})\notag\\
		&+G(\widetilde{X}_{t}^{i,N},\widetilde{X}_{k\theta}^{j,N})-G(\widetilde{X}_{k\theta}^{i,N},\widetilde{X}_{k\theta}^{j,N})]\rangle dt.
	\end{align}
	Then, by \eqref{dissipativity},
	\begin{align}\label{esti-r2}
		&1_{\left\{ r_t^i\neq0\right\}}\langle e_t^i,b(X_t^{i,N})-b(\widetilde{X}_{t}^{i,N})\notag\\
		&\qquad+\frac{1}{N-1}\sum\limits_{j\neq i}[G(X_t^{i,N},\widetilde{X}_t^{j,N})-G(\widetilde{X}_t^{i,N},\widetilde{X}_t^{j,N})]\rangle dt\notag\\
		&\leq 1_{\left\{ r_t^i\neq0\right\}}\beta_0(r_t^i)dt.
	\end{align}
	Applying the mean value theorem, it follows that
	\begin{align}\label{esti-r3}
		&1_{\left\{ r_t^i\neq0\right\}}\langle e_t^i,\frac{1}{N-1}\sum\limits_{j\neq i}[G(X_t^{i,N},X_t^{j,N})-G(X_t^{i,N},\widetilde{X}_t^{j,N})]\rangle dt \notag\\
		\leq&1_{\left\{ r_t^i\neq0\right\}}\frac{1}{N-1}\|\nabla_{y}G\|_{\infty}\sum\limits_{j\neq i}r_t^jdt;
	\end{align}
	\begin{align}\label{esti-r4}
		&1_{\left\{ r_t^i\neq0\right\}}\langle e_t^i,\frac{1}{N-1}\sum\limits_{j\neq i}[G(\widetilde{X}_t^{i,N},\widetilde{X}_t^{j,N})-G(\widetilde{X}_{t}^{i,N},\widetilde{X}_{k\theta}^{j,N})]\rangle dt\notag\\
		\leq&1_{\left\{ r_t^i\neq0\right\}}\frac{1}{N-1}\|\nabla_{y}G\|_{\infty}\sum\limits_{j\neq i}|\widetilde{X}_t^{j,N}-\widetilde{X}_{k\theta}^{j,N}|dt;
	\end{align}
	\begin{align}\label{esti-r5}
		&1_{\left\{ r_t^i\neq0\right\}}\langle e_t^i,\frac{1}{N-1}\sum\limits_{j\neq i}[G(\widetilde{X}_{t}^{i,N},\widetilde{X}_{k\theta}^{j,N})-G(\widetilde{X}_{k\theta}^{i,N},\widetilde{X}_{k\theta}^{j,N})]\rangle dt\notag\\
		\leq&1_{\left\{ r_t^i\neq0\right\}}\frac{1}{N-1}\|\nabla_{x}G\|_{\infty}\sum\limits_{j\neq i}|\widetilde{X}_t^{i,N}-\widetilde{X}_{k\theta}^{i,N}|dt.
	\end{align}
	Combining \eqref{esti-r1}-\eqref{esti-r5}, we get
	\begin{align}\label{rt1}
		1_{\left\{ r_t^i\neq0\right\}}dr_t^i\leq&1_{\left\{ r_t^i\neq0\right\}}2\sqrt{2}\lambda_\delta(r_t^i)d\tilde{B}_t^i\nonumber\\
		&+1_{\left\{ r_t^i\neq0\right\}}\beta_0(r_t^i)dt\nonumber\\
		&+1_{\left\{ r_t^i\neq0\right\}}\langle e_t^i,b(\widetilde{X}_{t}^{i,N})-b_{\alpha}(\widetilde{X}_{k\theta}^{i,N})\rangle dt\nonumber\\
		&+1_{\left\{ r_t^i\neq0\right\}}\frac{1}{N-1}\|\nabla_{y}G\|_{\infty}\sum\limits_{j\neq i}r_t^jdt\nonumber\\
		&+1_{\left\{ r_t^i\neq0\right\}}\frac{1}{N-1}\|\nabla_{y}G\|_{\infty}\sum\limits_{j\neq i}|\widetilde{X}_t^{j,N}-\widetilde{X}_{k\theta}^{j,N}|dt\nonumber\\
		&+1_{\left\{ r_t^i\neq0\right\}}\frac{1}{N-1}\|\nabla_{x}G\|_{\infty}\sum\limits_{j\neq i}|\widetilde{X}_t^{i,N}-\widetilde{X}_{k\theta}^{i,N}|dt.
	\end{align}
	\blue{For $r_t^i=0$, an argument analogous to that used in \eqref{eq:rt=0} gives}
	\begin{align}\label{rt}
		dr_t^i\leq&1_{\left\{ r_t^i\neq0\right\}}2\sqrt{2}\lambda_\delta(r_t^i)d\tilde{B}_t^i+\beta_0(r_t^i)dt\nonumber\\
		&+|b(\widetilde{X}_{t}^{i,N})-b_{\alpha}(\widetilde{X}_{k\theta}^{i,N})| dt\nonumber\\
		&+\frac{1}{N-1}\|\nabla_{y}G\|_{\infty}\sum\limits_{j\neq i}r_t^jdt+\frac{1}{N-1}\|\nabla_{y}G\|_{\infty}\sum\limits_{j\neq i}|\widetilde{X}_t^{j,N}-\widetilde{X}_{k\theta}^{j,N}|dt\nonumber\\
		&+\frac{1}{N-1}\|\nabla_{x}G\|_{\infty}\sum\limits_{j\neq i}|\widetilde{X}_t^{i,N}-\widetilde{X}_{k\theta}^{i,N}|dt.
	\end{align}

	\blue{Recall that, for any $\varepsilon>0$, $h_\varepsilon(r):=h(r)+\varepsilon r$ for all $r\geq0$. For $t\in[k\theta,(k+1)\theta]$, let}
	\begin{equation*}
		\widetilde{H}_t^\varepsilon:=e^{\widetilde{K}_\varepsilon (t-k\theta)}\sum_{i=1}^{N}h_\varepsilon(r_t^i),
	\end{equation*}
	\blue{where $\widetilde{K}_\varepsilon$ is a constant depending on $\varepsilon$, to be determined below. Applying It\^o's formula and using \eqref{rt}, we obtain}
	\begin{align}\label{H-est-1}
		d\widetilde{H}_t^\varepsilon=&\widetilde{K}_\varepsilon \widetilde{H}_t^\varepsilon dt+e^{\widetilde{K}_\varepsilon (t-k\theta)}\sum_{i=1}^{N}(h'(r_t^i)+\varepsilon)dr_t^i\notag\\
		&+4e^{\widetilde{K}_\varepsilon (t-k\theta)}\sum_{i=1}^Nh''(r_t^i)1_{\left\{ r_t^i\neq0\right\}}\lambda^2_\delta(r_t^i)dt\notag\\
		\leq&\widetilde{K}_\varepsilon \widetilde{H}_t^\varepsilon dt\notag\\
		&+e^{\widetilde{K}_\varepsilon (t-k\theta)}\sum_{i=1}^N(h'(r_t^i)+\varepsilon)2\sqrt{2}\lambda_{\delta}(r_t^i)d\tilde{B}_t^i+e^{\widetilde{K}_\varepsilon (t-k\theta)}\sum_{i=1}^N(h'(r_t^i)+\varepsilon)\beta_0(r_t^i)dt\notag\\
		&+e^{\widetilde{K}_\varepsilon (t-k\theta)}\sum_{i=1}^N(h'(r_t^i)+\varepsilon)\frac{1}{N-1}\|\nabla_{y}G\|_{\infty}\sum\limits_{j\neq i}r_t^jdt\notag\\
		&+e^{\widetilde{K}_\varepsilon (t-k\theta)}\sum_{i=1}^N(h'(r_t^i)+\varepsilon)|b(\widetilde{X}_{t}^{i,N})-b_\alpha(\widetilde{X}_{k\theta}^{i,N})| dt\notag\\
		&+e^{\widetilde{K}_\varepsilon (t-k\theta)}\sum_{i=1}^N(h'(r_t^i)+\varepsilon)\frac{1}{N-1}\|\nabla_{y}G\|_{\infty}\sum\limits_{j\neq i}|\widetilde{X}_t^{j,N}-\widetilde{X}_{k\theta}^{j,N}|dt\notag\\
		&+e^{\widetilde{K}_\varepsilon (t-k\theta)}\sum_{i=1}^N(h'(r_t^i)+\varepsilon)\frac{1}{N-1}\|\nabla_{x}G\|_{\infty}\sum\limits_{j\neq i}|\widetilde{X}_t^{i,N}-\widetilde{X}_{k\theta}^{i,N}|dt\notag\\
		&+4e^{\widetilde{K}_\varepsilon (t-k\theta)}\sum_{i=1}^Nh''(r_t^i)\lambda_{\delta}^2(r_t^i)dt\notag\\
		\le&\widetilde{K}_\varepsilon \widetilde{H}_t^\varepsilon dt+e^{\widetilde{K}_\varepsilon (t-k\theta)}\sum_{i=1}^N(h'(r_t^i)+\varepsilon)2\sqrt{2}\lambda_{\delta}(r_t^i)d\tilde{B}_t^i\notag\\
		&+e^{\widetilde{K}_\varepsilon (t-k\theta)}\sum_{i=1}^N(\mathcal{L}_{\lambda_{\delta}}h(r_t^i)+\varepsilon \beta_0(r_t^i))dt\notag\\
		&+e^{\widetilde{K}_\varepsilon (t-k\theta)}\sum_{i=1}^N(\|h'\|_\infty+\varepsilon)\frac{1}{N-1}\|\nabla_{y}G\|_{\infty}\sum\limits_{j\neq i}r_t^jdt\notag\\
		&+e^{\widetilde{K}_\varepsilon (t-k\theta)}\sum_{i=1}^N(h'(r_t^i)+\varepsilon)|b(\widetilde{X}_{t}^{i,N})-b_{\alpha}(\widetilde{X}_{k\theta}^{i,N})| dt\notag\\
		&+e^{\widetilde{K}_\varepsilon (t-k\theta)}\sum_{i=1}^N(\|h'\|_\infty+\varepsilon)\frac{1}{N-1}(\|\nabla_{y}G\|_{\infty}+\|\nabla_{x}G\|_{\infty})\notag\\
		&\qquad\times\sum\limits_{j\neq i}|\widetilde{X}^{j,N}_t-\widetilde{X}_{k\theta}^{j,N}|dt\notag\\
		:=&d\widetilde{M}^{\varepsilon}_t+\widetilde{D}^{\varepsilon}_tdt+\widetilde{R}^{\varepsilon}_tdt,
	\end{align}
	where
	\begin{align*}
		d\widetilde{M}_t^\varepsilon&=2\sqrt{2}e^{\widetilde{K}_\varepsilon (t-k\theta)}\sum_{i=1}^N(h'(r_t^i)+\varepsilon)\lambda_{\delta}(r_t^i)d\tilde{B}_t^i,
	\end{align*}
	\begin{align*}
		\widetilde{D}_t^\varepsilon&:=\widetilde{K}_\varepsilon \widetilde{H}_t^\varepsilon\\
		&\quad+e^{\widetilde{K}_\varepsilon (t-k\theta)}\sum_{i=1}^N\Big(\mathcal{L}_{\lambda_{\delta}}h(r_t^i)+\varepsilon \beta_0(r_t^i) \\
		&\qquad+\frac{\|h'\|_\infty+\varepsilon}{N-1}\|\nabla_{y}G\|_{\infty}\sum\limits_{j\neq i}r_t^j\Big),
	\end{align*}
	and
	\begin{align*}
		\widetilde{R}_t^\varepsilon:=&e^{\widetilde{K}_\varepsilon (t-k\theta)}\sum_{i=1}^N\Bigg[(h'(r_t^i)+\varepsilon)|b(\widetilde{X}_{t}^{i,N})-b(\widetilde{X}_{k\theta}^{i,N})\\
		&\qquad+b(\widetilde{X}_{k\theta}^{i,N})-b_{\alpha}(\widetilde{X}_{k\theta}^{i,N})|\\
		&\quad +(\|h'\|_\infty+\varepsilon)\sum\limits_{j\neq i}\frac{\|\nabla_{y}G\|_{\infty}|\widetilde{X}^{j,N}_t-\widetilde{X}_{k\theta}^{j,N}|+\|\nabla_{x}G\|_{\infty}|\widetilde{X}^{i,N}_t-\widetilde{X}_{k\theta}^{i,N}|}{N-1}\Bigg].
	\end{align*}
	Here $\widetilde{M}_t^\varepsilon$ is a martingale.
	Then, by Assumption ({\bf A4}), 
	\begin{align*}
		\widetilde{D}_t^\varepsilon
		\leq&\widetilde{K}_\varepsilon \widetilde{H}_t^\varepsilon\\
		&+e^{\widetilde{K}_\varepsilon (t-k\theta)}\sum_{i=1}^N(\mathcal{L}_{\lambda_{\delta}}h(r_t^i)+\varepsilon \beta_0(r_t^i))+e^{\widetilde{K}_\varepsilon (t-k\theta)}({\|h'\|_{\infty}+\varepsilon})\|\nabla_{y}G\|_{\infty}\sum_{i=1}^N r_t^i\\
		\leq&e^{\widetilde{K}_\varepsilon (t-k\theta)}\sum_{i=1}^N[1-\lambda_\delta^2(r_t^i)][r_t^i+\beta_0(r_t^i)h'(r_t^i)]\\
		&+e^{\widetilde{K}_\varepsilon (t-k\theta)}\sum_{i=1}^N[\widetilde{K}_\varepsilon(h(r_t^i)+\varepsilon r_t^i)+\varepsilon \beta_0(r_t^i)\\
		&\qquad+(\|h'\|_\infty+\varepsilon)\|\nabla_{y}G\|_{\infty}r_t^i-r_t^i]\\
		\leq&e^{\widetilde{K}_\varepsilon (t-k\theta)}\sum_{i=1}^N[1-\lambda_\delta^2(r_t^i)][r_t^i+\beta_0(r_t^i)h'(r_t^i)]\\
		&+e^{\widetilde{K}_\varepsilon (t-k\theta)}\sum_{i=1}^N[\widetilde{K}_\varepsilon(\|h'\|_\infty+\varepsilon)+\varepsilon M\\
		&\qquad+(\|h'\|_\infty+\varepsilon)\|\nabla_{y}G\|_{\infty}-1]r_t^i.
	\end{align*}
	Let
	\[
	\widetilde{K}_\varepsilon=\frac{1-\varepsilon M-(\|h'\|_\infty+\varepsilon)\|\nabla_{y}G\|_{\infty}}{\|h'\|_\infty+\varepsilon}.
	\]
	\blue{Then}
	\begin{equation}\label{H-est-D}
		\widetilde{D}_t^\varepsilon\leq e^{\widetilde{K}_\varepsilon (t-k\theta)}\sum_{i=1}^N[1-\lambda_\delta^2(r_t^i)][r_t^i+\beta^+_0(r_t^i)h'(r_t^i)].
	\end{equation}
	For $\widetilde{R}_t^\varepsilon$, by Assumption ({\bf A1}) and the last inequality in \eqref{prop1}, we have
	\begin{align*}
		|\widetilde{R}_t^\varepsilon|\leq&e^{\widetilde{K}_\varepsilon (t-k\theta)}(\|h'\|_\infty+\varepsilon)\sum_{i=1}^N\Big[L(1+|\widetilde{X}^{i,N}_t|^{l}+|\widetilde{X}_{k\theta}^{i,N}|^{l})|\widetilde{X}^{i,N}_t-\widetilde{X}_{k\theta}^{i,N}|^{\beta}\\
		&\quad +2K\theta^\alpha(1+|\widetilde{X}^{i,N}_{k\theta}|^{2\varkappa+1})\\
		&\quad +\sum\limits_{j\neq i}\frac{\|\nabla_{y}G\|_{\infty}|\widetilde{X}^{j,N}_t-\widetilde{X}_{k\theta}^{j,N}|+\|\nabla_{x}G\|_{\infty}|\widetilde{X}^{i,N}_t-\widetilde{X}_{k\theta}^{i,N}|}{N-1}\Big]\\
		=&e^{\widetilde{K}_\varepsilon (t-k\theta)}(\|h'\|_\infty+\varepsilon)\Bigg(\sum_{i=1}^NL\Big [|\widetilde{X}^{i,N}_t-\widetilde{X}_{k\theta}^{i,N}|^{\beta}\\
		&\quad +|\widetilde{X}_t^{i,N}|^{l}|\widetilde{X}^{i,N}_t-\widetilde{X}_{k\theta}^{i,N}|^{\beta}+|\widetilde{X}_{k\theta}^{i,N}|^l|\widetilde{X}^{i,N}_t-\widetilde{X}_{k\theta}^{i,N}|^\beta\Big]\\
		&\quad +2K\theta^\alpha N+2K\theta^\alpha\sum_{i=1}^N|\widetilde{X}^{i,N}_{k\theta}|^{2\varkappa+1}\Bigg)\\
		&\quad +e^{\widetilde{K}_\varepsilon (t-k\theta)}(\|h'\|_\infty+\varepsilon)(\|\nabla_{y}G\|_{\infty}+\|\nabla_{x}G\|_{\infty})\sum_{i=1}^N|\widetilde{X}^{i,N}_t-\widetilde{X}_{k\theta}^{i,N}|.
	\end{align*}
	The inequality $2l/(2-\beta)\le2\varkappa+1\le q$ and Lemma~\ref{primul} justify all moment factors in the following H\"older estimate, using $\widetilde C_0=1$ if $l=0$.
	\blue{Applying H\"older's inequality and Lemmas~\ref{primul} and \ref{primu2}, we obtain}
	\begin{align*}
		&\mathbb{E}\sum_{i=1}^N\Big[|\widetilde{X}^{i,N}_t-\widetilde{X}_{k\theta}^{i,N}|^{\beta}+|\widetilde{X}_t^{i,N}|^{l}|\widetilde{X}^{i,N}_t-\widetilde{X}_{k\theta}^{i,N}|^{\beta}+|\widetilde{X}_{k\theta}^{i,N}|^l|\widetilde{X}_{t}^{i,N}-\widetilde{X}_{k\theta}^{i,N}|^\beta\Big]\\
		\leq&\Big[N^{1-\frac{\beta}{2}}+(\mathbb{E}\sum_{i=1}^N|\widetilde{X}_t^{i,N}|^\frac{2l}{2-\beta})^{1-\frac{\beta}{2}}+(\mathbb{E}\sum_{i=1}^N|\widetilde{X}_{k\theta}^{i,N}|^\frac{2l}{2-\beta})^{1-\frac{\beta}{2}}\Big]\\
		&\qquad\times\Big(\mathbb{E}\sum_{i=1}^N|\widetilde{X}^{i,N}_t-\widetilde{X}_{k\theta}^{i,N}|^2\Big)^\frac{\beta}{2}\\
		=&\Big[N^{1-\frac{\beta}{2}}+(N\mathbb{E}|\widetilde{X}_t^{1,N}|^\frac{2l}{2-\beta})^{1-\frac{\beta}{2}} +(N\mathbb{E}|\widetilde{X}_{k\theta}^{1,N}|^\frac{2l}{2-\beta})^{1-\frac{\beta}{2}}\Big]\\
		&\qquad\times\Big(\mathbb{E}\sum_{i=1}^N|\widetilde{X}^{i,N}_t-\widetilde{X}_{k\theta}^{i,N}|^2\Big)^\frac{\beta}{2}\\
		=&N^{1-\frac{\beta}{2}}\Big[1+(\mathbb{E}|\widetilde{X}_t^{1,N}|^\frac{2l}{2-\beta})^{1-\frac{\beta}{2}}+(\mathbb{E}|\widetilde{X}_{k\theta}^{1,N}|^\frac{2l}{2-\beta})^{1-\frac{\beta}{2}}\Big]\\
		&\qquad\times\Big(\mathbb{E}\sum_{i=1}^N|\widetilde{X}^{i,N}_t-\widetilde{X}_{k\theta}^{i,N}|^2\Big)^\frac{\beta}{2}\\
		\leq&N^{1-\frac{\beta}{2}}(1+2(\widetilde{C}_{\frac{2l}{2-\beta}})^{1-\frac{\beta}{2}})\\
		&\qquad\times[N\theta(2d+16K^2+6K_G^2 +16K^2\widetilde{C}_2+12K_G^2\widetilde{C}_2)]^\frac{\beta}{2}.
	\end{align*}
	\blue{By the Cauchy--Schwarz inequality and Lemmas~\ref{primul} and \ref{primu2},}
	\begin{align*}
		\mathbb{E}\sum_{i=1}^N|\widetilde{X}^{i,N}_{k\theta}|^{2\varkappa+1}\leq N\widetilde{C}_{2\varkappa+1},
	\end{align*}
	and
	\begin{align*}
		\mathbb{E}\sum_{i=1}^N|\widetilde{X}^{i,N}_t-\widetilde{X}_{k\theta}^{i,N}|&\leq\sqrt{N\mathbb{E}\sum_{i=1}^N|\widetilde{X}^{i,N}_t-\widetilde{X}_{k\theta}^{i,N}|^2}\\
		&\leq N\sqrt{\theta}\sqrt{2d+16K^2+6K_G^2 +16K^2\widetilde{C}_2+12K_G^2\widetilde{C}_2}.
	\end{align*}
	Therefore, for any $\beta\in(0,1]$,
	\begin{align}\label{H-est-R}
		\mathbb{E}|\widetilde{R}_t^\varepsilon|
		\leq&e^{\widetilde{K}_\varepsilon(t-k\theta)}N(\Gamma_3\theta^{\alpha}+\Gamma_4\theta^{\frac{\beta}{2}}+\Gamma_5\theta^{\frac{1}{2}})
	\end{align}
	where
	\begin{align*}
		\Gamma_{3}&=(\|h'\|_\infty+\varepsilon)(2K+2K\widetilde{C}_{2l+2\beta-1}),\\
		\Gamma_4&=L(\|h'\|_\infty+\varepsilon)[1+2(\widetilde{C}_{\frac{2l}{2-\beta}})^{1-\frac{\beta}{2}}]\\
		&\quad\times(2d+16K^2+6K_G^2 +16K^2\widetilde{C}_2+12K_G^2\widetilde{C}_2)^{\frac{\beta}{2}},\\
		\Gamma_5&=(\|h'\|_\infty+\varepsilon)(\|\nabla_{y}G\|_{\infty}+\|\nabla_{x}G\|_{\infty})\\
		&\quad\times\sqrt{2d+16K^2+6K_G^2 +16K^2\widetilde{C}_2+12K_G^2\widetilde{C}_2}.
	\end{align*}

	Then, combining \eqref{H-est-1}--\eqref{H-est-R}, we have
	\begin{align*}
		&\mathbb{E}e^{\widetilde{K}_\varepsilon\theta}\sum_{i=1}^{N}h_\varepsilon(|X^{i,N}_{(k+1)\theta}-\widetilde{X}_{(k+1)\theta}^{i,N}|)\\
		\leq&\mathbb{E}\sum_{i=1}^Nh_\varepsilon(|X^{i,N}_{k\theta}-\widetilde{X}_{k\theta}^{i,N}|)
\\
		&+\mathbb{E}\int_{k\theta}^{(k+1)\theta}e^{\widetilde{K}_\varepsilon (s-k\theta)}\sum_{i=1}^N[1-\lambda_\delta^2(r_s^i)][r_s^i+\beta^+_0(r_s^i)h'(r_s^i)]ds
		\\&+N(\Gamma_3\theta^{\alpha}+\Gamma_4\theta^{\frac{\beta}{2}}+\Gamma_5\theta^{\frac{1}{2}})\int_{k\theta}^{(k+1)\theta}e^{\widetilde{K}_\varepsilon(s-k\theta)}ds\\
		=&\mathbb{E}\sum_{i=1}^Nh_\varepsilon(|X^{i,N}_{k\theta}-\widetilde{X}_{k\theta}^{i,N}|)
\\
		&+\Lambda^{k,\delta}+\frac{N}{\widetilde{K}_\varepsilon}(\Gamma_3\theta^{\alpha}+\Gamma_4\theta^{\frac{\beta}{2}}+\Gamma_5\theta^{\frac{1}{2}})(e^{\widetilde{K}_\varepsilon\theta}-1),
	\end{align*}
	where
	\[
\begin{aligned}
	\Lambda^{k,\delta}&:=\mathbb{E}\int_{k\theta}^{(k+1)\theta}e^{\widetilde{K}_\varepsilon(s-k\theta)}\\
		&\qquad\times\sum_{i=1}^N[1-\lambda_\delta^2(r_s^i)][r_s^i+\beta^+_0(r_s^i)h'(r_s^i)]ds.
\end{aligned}
	\]
	Similarly to the argument in Proposition \ref{singlee}, for each fixed $k$, it holds that 
	\begin{align*}
		\Lambda^{k,\delta}\le e^{\widetilde{K}_\varepsilon\theta}N\delta\theta(1+M^+\|h'\|_\infty)\to0,\,\text{as}\, \delta\to0.
	\end{align*}

	By iteration,
	\begin{align*}
		\mathbb{E}\sum_{i=1}^{N}h_\varepsilon(|X^{i,N}_{(k+1)\theta}-\widetilde{X}_{(k+1)\theta}^{i,N}|)\\
		\leq& e^{-(k+1)\widetilde{K}_\varepsilon\theta}\mathbb{E}\sum_{i=1}^{N}h_\varepsilon(|X^{i,N}_{0}-\widetilde{X}_{0}^{i,N}|)\\
		&+\sum_{j=0}^{k}\Lambda^{j,\delta}e^{-(k-j)\widetilde{K}_\varepsilon\theta}\\
		&+\frac{N(\Gamma_3\theta^{\alpha}+\Gamma_4\theta^{\frac{\beta}{2}}+\Gamma_5\theta^{\frac{1}{2}})}{\widetilde{K}_\varepsilon}.
	\end{align*}
	Then,
	\begin{align}\label{coll}
		W_{1,d_{l^1}}(\mu_{X^N_{(k+1)\theta}},\mu_{\widetilde{X}^N_{(k+1)\theta}})\notag\\
		\leq&\varlimsup\limits_{\delta\rightarrow0}\sum_{i=1}^{N}\mathbb{E}|X^{i,N}_{(k+1)\theta}-\widetilde{X}_{(k+1)\theta}^{i,N}|\notag\\
		\leq&\sup_{r>0}\frac{r}{h(r)+\varepsilon r}\varlimsup\limits_{\delta\rightarrow0}\sum_{i=1}^{N}\mathbb{E}h_\varepsilon(|X^{i,N}_{(k+1)\theta}-\widetilde{X}_{(k+1)\theta}^{i,N}|)\notag\\
		\leq&\sup_{r>0}\frac{r}{h(r)+\varepsilon r}e^{-(k+1)\widetilde{K}_\varepsilon\theta}\mathbb{E}\sum_{i=1}^{N}h_\varepsilon(|X^{i,N}_{0}-\widetilde{X}_{0}^{i,N}|)\notag\\
		&+\sup_{r>0}\frac{r}{h(r)+\varepsilon r}\frac{N(\Gamma_3\theta^{\alpha}+\Gamma_4\theta^{\frac{\beta}{2}}+\Gamma_5\theta^{\frac{1}{2}})}{\widetilde{K}_\varepsilon}\notag\\
		\leq&\sup_{r>0}\frac{r}{h(r)+\varepsilon r}\sup_{r>0}\frac{h(r)+\varepsilon r}{r}e^{-(k+1)\widetilde{K}_\varepsilon\theta}\notag\\
		&\qquad\times\mathbb{E}\sum_{i=1}^{N}|X^{i,N}_{0}-\widetilde{X}_{0}^{i,N}|\notag\\
		&+\sup_{r>0}\frac{r}{h(r)+\varepsilon r}\frac{N(\Gamma_3\theta^{\alpha}+\Gamma_4\theta^{\frac{\beta}{2}}+\Gamma_5\theta^{\frac{1}{2}})}{\widetilde{K}_\varepsilon}\notag\\
		=&A_\varepsilon e^{-(k+1)\widetilde{K}_\varepsilon\theta}\mathbb{E}\sum_{i=1}^{N}|X^{i,N}_{0}-\widetilde{X}_{0}^{i,N}|\notag\\
		&+\sup_{r>0}\frac{r}{h(r)+\varepsilon r}\frac{N(\Gamma_3\theta^{\alpha}+\Gamma_4\theta^{\frac{\beta}{2}}+\Gamma_5\theta^{\frac{1}{2}})}{\widetilde{K}_\varepsilon}.
	\end{align}
	Since $X^{i,N}_0=\widetilde{X}^{i,N}_0$ for $i=1,\cdots,N,$ we have
	\begin{align*}
		W_{1,d_{l^1}}(\mu_{X^N_{(k+1)\theta}},\mu_{\widetilde{X}^N_{(k+1)\theta}})\\
		\leq&\sup_{r>0}\frac{r}{h(r)+\varepsilon r}\frac{N(\Gamma_3\theta^{\alpha}+\Gamma_4\theta^{\frac{\beta}{2}}+\Gamma_5\theta^{\frac{1}{2}})}{\widetilde{K}_\varepsilon}.
	\end{align*}
	
	More generally, if $T\in[k\theta,(k+1)\theta],$ the same argument on $[k\theta,T]$ yields
	\begin{equation*}
		W_{1,d_{l^1}}(\mu_{X^N_{T}},\mu_{\widetilde{X}^N_{T}})\leq\sup\limits_{r>0}\frac{r}{h(r)+\varepsilon r}\frac{N(\Gamma_3\theta^{\alpha}+\Gamma_4\theta^{\frac{\beta}{2}}+\Gamma_5\theta^{\frac{1}{2}})}{\widetilde{K}_\varepsilon},
	\end{equation*}
	because $T-k\theta\leq\theta$ and the one-step estimate is unchanged.
\end{proof}

	For the later empirical-measure and path-functional arguments, we also establish a bound before sending the cutoff parameter to zero. Put
	\[
	S_\varepsilon:=\sup_{r>0}\frac{r}{h(r)+\varepsilon r},
	\qquad c_h:=1+M^+\|h'\|_\infty,
	\qquad D_\varepsilon:=\frac{S_\varepsilon c_h}{\widetilde K_\varepsilon}.
	\]
	The constant $c_h$ is independent of both cutoff and time step. Write $\mathbb E_\delta$ for expectation under the coupling \eqref{copling-par} with a fixed $\delta>0$ and identical initial data. If
	\[
	u_\delta(t):=\frac1N\mathbb E_\delta\sum_{i=1}^N h_\varepsilon(r_t^i),
	\qquad
	F_\theta:=\Gamma_3\theta^\alpha+\Gamma_4\theta^{\beta/2}
	+\Gamma_5\theta^{1/2},
	\]
	then \eqref{H-est-1}--\eqref{H-est-R} imply, in integrated form on successive grid intervals,
	\[
	u_\delta'(t)\le-\widetilde K_\varepsilon u_\delta(t)
	+F_\theta+\delta c_h,\qquad u_\delta(0)=0.
	\]
	Gronwall's inequality, without discarding the exponential weight on a grid interval, gives
	\begin{equation}\label{eq:finite-cutoff-disc}
	\frac1N\mathbb E_\delta\sum_{i=1}^N
	|\widetilde X_t^{i,N}-X_t^{i,N}|
	\le \mathcal E_{\rm disc}^{\rm MV}(\theta)+\delta D_\varepsilon,
	\qquad t\ge0.
	\end{equation}
	This estimate is uniform in $t,N,\theta$, with $\varepsilon$ fixed. In particular, for every $T>0$,
	\begin{equation}\label{eq:finite-cutoff-path}
	\frac1{NT}\int_0^T\mathbb E_\delta\sum_{i=1}^N
	|\widetilde X_t^{i,N}-X_t^{i,N}|\,dt
	\le \mathcal E_{\rm disc}^{\rm MV}(\theta)+\delta D_\varepsilon.
	\end{equation}
	Thus no exchange of a cutoff limit with a time integral is needed.

\begin{lem}[\blue{Contraction toward the invariant distribution}]\label{multi1}
	\blue{Under the hypotheses and notations of Theorem~\ref{mainre2}, for any $\varepsilon>0$ such that $\widetilde{K}_\varepsilon>0$ and any $T>0$, we have}
	\begin{align*}
		W_{1,d_{l^1}}(\mu_{X^N_{T}},\pi^N)\leq e^{-\widetilde{K}_\varepsilon T}A_\varepsilon W_{1,d_{l^1}}(\mu_{X^N_{0}},\pi^N).
	\end{align*}
	
\end{lem}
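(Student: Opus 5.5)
I will prove this as the $N$-particle analogue of Lemma~\ref{singlee2}, using a componentwise reflection coupling between two copies of \eqref{multiparticle}. One copy is started from $\mu_{X_0^N}$ and the other from $\pi^N$. The initial pair is chosen as an optimal coupling for $W_{1,d_{l^1}}$, so that $\mathbb E\sum_i|X_0^{i,N}-Y_0^{i,N}|=W_{1,d_{l^1}}(\mu_{X^N_0},\pi^N)$. Since $\pi^N$ is invariant (Remark~\ref{rem:LWZ-general-drift}), $Y^N_t\sim\pi^N$ for all $t$.

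\textbf{Step 1: the coupling.} For each particle $i$ I use the cutoff coupling of Proposition~\ref{multi}. I take $\lambda_\delta,\pi_\delta$ as in \eqref{def-para1}--\eqref{def-para2} and reflect $B^{1,i}$ through $R^i_t=I_d-2e^i_t(e^i_t)^T$ with $e^i_t=Z^i_t/|Z^i_t|$. This avoids trouble when individual particles meet, since a coalescence time is not appropriate componentwise. Both components now carry the exact drifts $b(\cdot)+\frac1{N-1}\sum_{j\ne i}G(\cdot,\cdot)$ evaluated at the current states, so no discretization residual appears.

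\textbf{Step 2: drift estimate for $r^i_t$.} Computing $dr^i_t$ as in \eqref{esti-r1}, I split
\[
G(X^i,X^j)-G(Y^i,Y^j)=[G(X^i,X^j)-G(X^i,Y^j)]+[G(X^i,Y^j)-G(Y^i,Y^j)].
\]
The second bracket is combined with $b(X^i)-b(Y^i)$ and bounded by $\beta_0(r^i_t)$ via \eqref{dissipativity}. The first is bounded by $\|\nabla_yG\|_\infty r^j_t$. This gives
\[
dr^i_t\le 2\sqrt2\lambda_\delta(r^i_t)d\tilde B^i_t+\beta_0(r^i_t)dt+\tfrac{\|\nabla_yG\|_\infty}{N-1}\sum_{j\ne i}r^j_tdt.
\]

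\textbf{Step 3: Itô estimate.} I apply Itô's formula to $e^{\widetilde K_\varepsilon t}\sum_i h_\varepsilon(r^i_t)$. Exactly as in the bound \eqref{H-est-D}, the Poisson equation \eqref{h}, (A4), and the symmetrization $\sum_i\frac1{N-1}\sum_{j\ne i}r^j=\sum_i r^i$ make the drift at most
\[
e^{\widetilde K_\varepsilon t}\sum_i(1-\lambda_\delta^2(r^i_t))\bigl(r^i_t+\beta_0^+(r^i_t)h'(r^i_t)\bigr)\le e^{\widetilde K_\varepsilon t}N\delta c_h,
\]
with $\widetilde K_\varepsilon$ as defined. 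The stochastic integral is a true martingale since its integrand is bounded by $\|h'\|_\infty+\varepsilon$. Taking expectations gives
\[
\mathbb E\sum_i h_\varepsilon(r^i_T)\le e^{-\widetilde K_\varepsilon T}\,\mathbb E\sum_i h_\varepsilon(r^i_0)+N\delta c_h/\widetilde K_\varepsilon .
\]

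\textbf{Step 4: conclusion.} I use the comparison $r\le S_\varepsilon h_\varepsilon(r)$ and $h_\varepsilon(r)\le \sup_r\frac{h_\varepsilon(r)}{r}\, r$, then let $\delta\to0$. This yields $W_{1,d_{l^1}}(\mu_{X^N_T},\pi^N)\le A_\varepsilon e^{-\widetilde K_\varepsilon T}\,\mathbb E\,d_{l^1}(X_0,Y_0)$, which is the claim.

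\textbf{Main obstacle.} The delicate point is rigor near $r^i_t=0$ and the existence of the strong coupled solution. For $r^i_t=0$ I handle the term as in \eqref{eq:rt=0}: $Z^i$ then has bounded variation, and with the exact drifts the $i$-th difference is at most $\frac{\|\nabla_yG\|_\infty}{N-1}\sum_{j\neq i} r^j_t$, consistent with Step 2 since $\beta_0(0)=0$. Well-posedness of the coupled system follows from the Lipschitz cutoff coefficients together with the one-sided bounds from (A4)--(A5), by the same localization argument as before.
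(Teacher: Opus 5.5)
Your proposal is correct and follows essentially the same route as the paper. You use the approximate componentwise reflection coupling with cutoff $\lambda_\delta$ between $X^N$ and a stationary copy started from a fixed optimal $d_{l^1}$-coupling, repeat the It\^o computation of Proposition~\ref{multi} without the remainder $\widetilde R^\varepsilon_t$, bound the cutoff error by $e^{\widetilde K_\varepsilon t}N\delta(1+M^+\|h'\|_\infty)$, and obtain $A_\varepsilon$ from the two comparisons between $r$ and $h_\varepsilon(r)$ after letting $\delta\to0$; your explicit $\delta$-dependent bound is just a slightly more detailed version of the paper's limsup step.
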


\begin{proof}
	Let $Y_t^N=(Y_t^{1,N},\ldots,Y_t^{N,N})$ be a stationary solution of \eqref{multiparticle} with law $\pi^N$, and couple $X_t^N$ and $Y_t^N$ by the approximate componentwise reflection coupling used in \cite{LWZ21}. Choose an optimal $d_{l^1}$-coupling of the initial laws, independently of the driving Brownian motions, and keep this initial coupling fixed as $\delta$ varies. With
	\[
	r_t^i=|X_t^{i,N}-Y_t^{i,N}|,\qquad
	H_t^\varepsilon=e^{\widetilde K_\varepsilon t}\sum_{i=1}^Nh_\varepsilon(r_t^i),
	\]
	\blue{the calculation is the same as in Proposition~\ref{multi}, except that no discretization remainder $\widetilde R_t^\varepsilon$ appears. More precisely, using the dissipativity inequality \eqref{dissipativity}, the interaction estimate controlled by $\|\nabla_yG\|_\infty$, and the definition of $\widetilde K_\varepsilon$, It\^o's formula gives}
	\[
	dH_t^\varepsilon\le dM_t^\varepsilon
	+e^{\widetilde K_\varepsilon t}\sum_{i=1}^N(1-\lambda_\delta^2(r_t^i))
	[r_t^i+\beta_0^+(r_t^i)h'(r_t^i)]\,dt .
	\]
	\blue{As in the proof of Proposition~\ref{multi}, the last term is bounded by}
	$e^{\widetilde K_\varepsilon t}N\delta(1+M^+\|h'\|_\infty)$ and therefore vanishes after integration when $\delta\downarrow0$. Hence
	\[
	\limsup_{\delta\downarrow0}\mathbb E_\delta
	\sum_{i=1}^Nh_\varepsilon(r_T^i)
	\le e^{-\widetilde K_\varepsilon T}\mathbb E\sum_{i=1}^Nh_\varepsilon(r_0^i).
	\]
	Using
	\[
	d_{l^1}(x,y)\le \sup_{r>0}\frac{r}{h(r)+\varepsilon r}
	\sum_{i=1}^Nh_\varepsilon(|x^i-y^i|)
	\]
	and, for the initial coupling,
	\[
	\sum_{i=1}^Nh_\varepsilon(|x^i-y^i|)
	\le \sup_{r>0}\frac{h(r)+\varepsilon r}{r}\,d_{l^1}(x,y),
	\]
	we obtain
	\[
	W_{1,d_{l^1}}(\mu_{X_T^N},\pi^N)
	\le A_\varepsilon e^{-\widetilde K_\varepsilon T}
	W_{1,d_{l^1}}(\mu_{X_0^N},\pi^N),
	\]
	where $A_\varepsilon=\sup_{r>0}\frac{r}{h(r)+\varepsilon r}
	\sup_{r>0}\frac{h(r)+\varepsilon r}{r}$. This is the asserted bound.
\end{proof}

\begin{proof}[\bf Proof of Theorem \ref{mainre2}]
	By the triangle inequality,
	\begin{equation*}
		W_{1,d_{l^1}}(\mu_{\widetilde{X}^N_{T}},\pi^N)\leq W_{1,d_{l^1}}(\mu_{X^N_{T}},\mu_{\widetilde{X}^N_{T}})+W_{1,d_{l^1}}(\mu_{X^N_{T}},\pi^N).
	\end{equation*}
	\blue{The desired result then follows by applying Proposition~\ref{multi} and Lemma~\ref{multi1}.}
\end{proof}
\begin{proof}[\bf Proof of Theorem \ref{thm:IPSmainre2}]
	\blue{For $x=(x^1,\ldots,x^N)\in(\mathbb{R}^d)^N$, let $V^N(x)=1+\frac{1}{N}\sum_{i=1}^N|x^i|^2$, and let $\widetilde{P}_{\theta,N}$ denote the one-step transition kernel of the Markov chain $\left\{(\widetilde{X}_{k\theta}^{1,N},\ldots,\widetilde{X}_{k\theta}^{N,N})\right\}_{k\in\mathbb{N}}$. Then}
	\begin{align*}
		&\widetilde{P}_{\theta,N}V^N(x^1,\cdots,x^N)\\
		=&\mathbb{E}[V^N(\widetilde{X}_\theta^{1,N},\cdots,\widetilde{X}_\theta^{N,N})]
		=\frac{1}{N}\sum_{i=1}^N(\mathbb{E}|\widetilde{X}^{i,N}_\theta|^2)+1\\
		=&\frac{1}{N}\sum_{i=1}^N\big(|x^i|^2+2\theta\langle x^i,b_\alpha(x^i)+\frac{1}{N-1}\sum_{j\neq i}G(x^i,x^j)\rangle\\&\qquad+\theta^2|b_\alpha(x^i)+\frac{1}{N-1}\sum_{j\neq i}G(x^i,x^j)|^2+2\theta d\big)+1.
	\end{align*}
	\blue{From \eqref{prop1} and \eqref{Glip},}
	\begin{align*}
		&\theta^2|b_\alpha(x^i)+\frac{1}{N-1}\sum_{j\neq i}G(x^i,x^j)|^2\\
\leq & 2\theta^2|b_{\alpha}(x^i)|^2+\frac{2\theta^2}{(N-1)^2}|\sum\limits_{j\neq i}G(x^i,x^j)|^2\\
		\leq&2\theta^2|b_{\alpha}(x^i)|^2+\frac{2\theta^2}{N-1}\sum\limits_{j\neq i}|G(x^i,x^j)|^2\\
		\leq&16K^2\theta^{2-2\alpha}(1+|x^i|^{2})+6K_G^2\theta^{2}\frac{1}{N-1}\sum_{j\neq i}(1+|x^i|^2+|x^j|^2)\\
		=&16K^2\theta^{2-2\alpha}+6K_G^2\theta^2+(16K^2\theta^{2-2\alpha}+6K_G^2\theta^2)|x^i|^2+6K_G^2\theta^2\frac{1}{N-1}\sum_{j\neq i}|x^j|^2.
	\end{align*}
	
	\blue{By Assumption {\bf (A2)},}
	\begin{align*}
		2\theta \langle x^i,b_\alpha(x^i)\rangle\le 2\theta(-c_1|x^i|^2+c_2).
	\end{align*}
	
	For any $\lambda>0$, Young's inequality yields
	\begin{align*}
		&2\theta\left\langle x^i,\frac{1}{N-1}\sum_{j\neq i}G(x^i,x^j)\right\rangle \\
		&=\frac{2\theta}{N-1}\sum_{j\neq i}
		\left\langle x^i,G(x^i,x^j)-G(0,x^j)+G(0,x^j)\right\rangle \\
		&\le 2\theta\|\nabla_xG\|_\infty |x^i|^2
		+\frac{\theta}{N-1}\sum_{j\neq i}
		\left(\lambda |x^i|^2+\lambda^{-1}|G(0,x^j)|^2\right) \\
		&\le \theta\left(2\|\nabla_xG\|_\infty+\lambda\right)|x^i|^2
		+\frac{2\theta K_G^2}{(N-1)\lambda}
		\sum_{j\neq i}(1+|x^j|^2).
	\end{align*}
	Consequently,
	\begin{align*}
		&\widetilde P_{\theta,N}V^N(x)\\
		\le{}& \frac1N\sum_{i=1}^N
		\Big[1-2c_1\theta+2\theta\|\nabla_xG\|_\infty
		+\lambda\theta+\frac{2\theta K_G^2}{\lambda}
		+16K^2\theta^{2-2\alpha}+12K_G^2\theta^2\Big]|x^i|^2 \\
		&+2\theta d+2\theta c_2+\frac{2\theta K_G^2}{\lambda}
		+16K^2\theta^{2-2\alpha}+6K_G^2\theta^2+1.
	\end{align*}
	Taking $\lambda=\sqrt2K_G$, the coefficient of $\frac1N\sum_{i=1}^N|x^i|^2$ becomes
	$1-2\theta(c_1-\|\nabla_xG\|_\infty-\sqrt2K_G
	-8K^2\theta^{1-2\alpha}-6K_G^2\theta)$.

	Hence, when $\theta<\widetilde{\eta},$ we have 
	\begin{align*}
		\widetilde{P}_{\theta,N}V^N(x^1,\cdots,x^N)\le\widetilde{r}_1V^N(x^1,\cdots,x^N)+C,
	\end{align*}
	where $\widetilde r_1$ is given by \eqref{wr1}. The argument used in the proof of Theorem~\ref{thm:mainre2} shows that the $N$-particle chain $(\widetilde X_{k\theta}^N)_{k\in\mathbb N}$ has a unique invariant distribution. Moreover, there exist constants $\widetilde\rho\in(0,1)$ and $C>0$ such that, for every $k\in\mathbb N$,
	\begin{align*}
		\sup\limits_{|f|\le V^N}|\widetilde{P}^k_{\theta,N}f(x^1,\cdots,x^N)-\pi^{\rm TEM,N}_\theta(f)|\le C\widetilde{\rho}^kV^N(x^1,\cdots,x^N).
	\end{align*}
	\blue{For any function $f$ that is $1$-Lipschitz with respect to $d_{l^1}$ and satisfies $f(0,\ldots,0)=0$,}
	\begin{align*}
		{\color{black}\sup_{x,y\in(\mathbb{R}^d)^N,\,x\neq y}\frac{|f(x)-f(y)|}{d_{l^1}(x,y)}\le1,}
	\end{align*}
 we have
	\begin{align*}	&|f(x^1,\cdots,x^N)|\le\sum_{i=1}^N|x^i|\le NV^N(x^1,\cdots,x^N).
	\end{align*}
	\blue{Thus $f/N$ is bounded in absolute value by $V^N$, and the $V^N$-uniform ergodic bound gives}
	\begin{align*}
		&|\mu_0^{\otimes N}\widetilde{P}^k_{\theta,N}(f)-\pi^{\rm TEM,N}_\theta(f)|\notag\\
		=&{\color{black}N\left|\mu_0^{\otimes N}\widetilde{P}^k_{\theta,N}(f/N)-\pi^{\rm TEM,N}_\theta(f/N)\right|\le CN\widetilde{\rho}^k\mu_0^{\otimes N}(V^N).}
	\end{align*}
	Kantorovich--Rubinstein duality yields
	\begin{align}\label{eq:coro0}
		&W_{1,d_{l^1}}(\mu_{\widetilde{X}^N_{k\theta}},\pi^{\rm TEM,N}_\theta)\notag\\
		&\qquad\le CN\widetilde{\rho}^k\mu_0^{\otimes N}(V^N).
	\end{align}
	By the triangle inequality and Theorem \ref{mainre2}, for any $k\in\mathbb{N},$ we have
	\begin{align*}
		W_{1,d_{l^1}}(\pi^{\rm TEM,N}_\theta,\pi^N)\le& W_{1,d_{l^1}}(\pi^{\rm TEM,N}_\theta,\mu_{\widetilde{X}^N_{k\theta}})+W_{1,d_{l^1}}(\mu_{\widetilde{X}^N_{k\theta}},\pi^N)\\
        \le&CN\widetilde{\rho}^k\mu_0^{\otimes N}(V^N)+\mathcal E_{\rm relax}^{N}(k\theta)+\mathcal E_{\rm disc}^{N}(\theta).
	\end{align*}
	By Lemma \ref{primul}, $\mu_0^{\otimes N}(V^N)<\infty.$ Letting $k\to\infty$ completes the proof. 
\end{proof}

\subsection{Proof of the Main Result for the McKean--Vlasov Equation}

\begin{bluechange}
By Remark~\ref{rem:LWZ-general-drift}, the continuous-time contraction and propagation-of-chaos estimates of \cite{LWZ21} apply to the present non-gradient drift. We use them in the form obtained by approximate componentwise reflection coupling. The interaction is normalized by $1/(N-1)$, as in \cite{LWZ21}.
\end{bluechange}

\begin{lem}[McKean--Vlasov contraction]\label{lem:MV-contraction}
	Under the assumptions and notations of Theorem \ref{MVTHM}, the adapted contraction estimate \cite[Corollary 2.9]{LWZ21} gives, for every $T\ge0$,
	\begin{align}\label{firstterm}
		W_1(\pi^{MV},\mu_{\bar{X}_T})\le A_\varepsilon e^{-\widetilde{K}_\varepsilon T}W_1(\pi^{MV},\mu_0).
	\end{align}
\end{lem}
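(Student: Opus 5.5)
The plan is to run the same componentwise reflection-coupling argument used in Lemma~\ref{singlee2} and Lemma~\ref{multi1}, now for the nonlinear equation \eqref{mckeanvlasov}. We compare the solution $\bar X_t$ started from $\mu_0$ with a stationary solution $\bar Y_t$ started from $\pi^{MV}$, whose law is $\pi^{MV}$ for all $t$. The two processes are coupled by the approximate reflection coupling with cutoff functions $\lambda_\delta,\pi_\delta$ from \eqref{def-para1}--\eqref{def-para2}. Their initial values are joined by an optimal $W_1$-coupling of $(\mu_0,\pi^{MV})$, chosen independently of the Brownian motions. Note that $\mu_{\bar Y_t}=\pi^{MV}$, so the drift of $\bar Y$ is the time-independent field $b+G\circledast\pi^{MV}$, while the drift of $\bar X$ is $b+G\circledast\mu_{\bar X_t}$.

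Writing $r_t=|\bar X_t-\bar Y_t|$, It\^o's formula (as in Proposition~\ref{singlee}) gives the following bound:
\[
dr_t\le \beta_0(r_t)\,dt+\bigl|G\circledast\mu_{\bar X_t}(\bar Y_t)-G\circledast\pi^{MV}(\bar Y_t)\bigr|\,dt+2\sqrt2\lambda_\delta(r_t)\,d\tilde B_t .
\]
The term $\beta_0(r_t)$ comes from integrating \eqref{dissipativity} against $\mu_{\bar X_t}$ (Remark~\ref{rem:LWZ-general-drift}). The interaction mismatch is bounded by $\|\nabla_yG\|_\infty W_1(\mu_{\bar X_t},\pi^{MV})$. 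Since $(\bar X_t,\bar Y_t)$ is itself a coupling of these two laws, this is at most $\|\nabla_yG\|_\infty\mathbb E r_t$.

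Next apply It\^o to $e^{\widetilde K_\varepsilon t}h_\varepsilon(r_t)$ and use the Poisson equation \eqref{h} together with (A4), exactly as in the computation of $D_t^\varepsilon$. Taking expectations yields
\[
\frac{d}{dt}\mathbb E h_\varepsilon(r_t)\le \mathbb E\bigl[-r_t+\varepsilon\beta_0(r_t)\bigr]+(\|h'\|_\infty+\varepsilon)\|\nabla_yG\|_\infty\mathbb E r_t+\delta c_h,
\]
with $c_h=1+M^+\|h'\|_\infty$. Using $h_\varepsilon(r)\le(\|h'\|_\infty+\varepsilon)r$, the right-hand side is at most $-\widetilde K_\varepsilon\,\mathbb E h_\varepsilon(r_t)+\delta c_h$, by the definition of $\widetilde K_\varepsilon$. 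Gronwall then gives $\mathbb E h_\varepsilon(r_T)\le e^{-\widetilde K_\varepsilon T}\mathbb E h_\varepsilon(r_0)+\delta c_h/\widetilde K_\varepsilon$. Letting $\delta\downarrow0$ and comparing $h_\varepsilon$ with $r$ through the two suprema defining $A_\varepsilon$ yields \eqref{firstterm}.

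The main obstacle is the nonlinearity. The coupled pair must exist as a strong solution in which the law $\mu_{\bar X_t}$ appearing in the drift is the true marginal. This can be done by freezing the flow $t\mapsto\mu_{\bar X_t}$, which is already known from well-posedness (Remark~\ref{rem:LWZ-general-drift}), and solving the resulting linear coupled SDE. Exactly this construction and estimate is what \cite[Corollary~2.9]{LWZ21} provides, with $\|\nabla^2_{xy}W\|_\infty$ replaced by $\|\nabla_yG\|_\infty$ as justified in Remark~\ref{rem:LWZ-general-drift}. I would therefore cite it for the construction and only check that the resulting rate and prefactor coincide with $\widetilde K_\varepsilon$ and $A_\varepsilon$ as computed above.
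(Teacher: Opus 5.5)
Your proposal is correct, but it follows a different route from the paper. The paper does no computation for this lemma. It imports \cite[Corollary~2.9]{LWZ21} and relies on Remark~\ref{rem:LWZ-general-drift} to justify transferring that gradient-system result to the drift $b(x)+G\circledast\mu(x)$, with $\|\nabla^2_{xy}W\|_\infty$ replaced by $\|\nabla_yG\|_\infty$.

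You prove the estimate directly instead. You freeze both measure flows and couple $\bar X$ by approximate reflection with a stationary copy $\bar Y\sim\pi^{MV}$. Your computation uses exactly the two structural inequalities isolated in Remark~\ref{rem:LWZ-general-drift}: integrated dissipativity for the state difference, and the $W_1$-Lipschitz bound in the measure variable, closed by $W_1(\mu_{\bar X_t},\pi^{MV})\le\mathbb E r_t$. The Poisson equation \eqref{h} then gives precisely the rate $\widetilde K_\varepsilon$ and prefactor $A_\varepsilon$. This also shows why the constants match Lemma~\ref{multi1}: your deterministic term $\|\nabla_yG\|_\infty\mathbb E r_t$ plays the role that $\frac{1}{N-1}\|\nabla_yG\|_\infty\sum_{j\ne i}r_t^j$ plays there once summed over $i$.

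The citation buys brevity. Your version buys a self-contained check that the non-gradient adaptation really yields these constants.

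Two points you use implicitly and should state:
\begin{enumerate}
\item[(1)] Identifying the marginals of the frozen coupled system with $\mu_{\bar X_t}$ and $\pi^{MV}$ requires uniqueness in law for the frozen, time-inhomogeneous SDEs. This follows from the one-sided bound $\beta_0(r)\le Mr$ integrated against the frozen measure, with additive noise.
\item[(2)] Finiteness of $\mathbb E r_t$, and of $W_1(\mu_0,\pi^{MV})$, uses Lemma~\ref{lem:MVmom} together with a finite first moment of $\pi^{MV}$.
\end{enumerate}

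With these in place, you need \cite{LWZ21} only for well-posedness of \eqref{mckeanvlasov} and the invariance of $\pi^{MV}$, not for the coupling construction itself.
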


\begin{lem}[Uniform propagation of chaos]\label{lem:POC}
	Under the assumptions and notations of Theorem \ref{MVTHM}, the coupling proof of the uniform propagation-of-chaos estimate \cite[Theorem 2.11(b), Eq. (4.17)]{LWZ21} yields, for each $1\le i\le N$ and every $T\ge0$,
	\begin{align}\label{secondterm}
		W_1(\mu_{\bar{X}_T},\mu_{X_T^{i,N}})\leq\frac{1}{\sqrt{N-1}}\frac{A_\varepsilon}{\widetilde{K}_\varepsilon}\|\nabla_{y}G\|_\infty \sup\limits_{t\ge0}\sqrt{\mathbb{E}|\bar{X}_t|^2}.
	\end{align}
\end{lem}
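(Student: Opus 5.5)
The plan is a synchronous-in-noise, reflection-type coupling between the particle system \eqref{multiparticle} and $N$ i.i.d.\ copies of the McKean--Vlasov equation, together with the same weighted distance $h_\varepsilon=h+\varepsilon r$ used in Proposition~\ref{multi}. Let $\bar X^1,\ldots,\bar X^N$ be independent solutions of \eqref{mckeanvlasov} with $\bar X^i_0=X^i_0$. Couple each pair $(X^{i,N},\bar X^i)$ by the approximate componentwise reflection coupling with cutoff $\lambda_\delta,\pi_\delta$. Write $r^i_t=|X^{i,N}_t-\bar X^i_t|$ and $\mu_t=\mathcal L(\bar X_t)$. In the drift of $r^i_t$, split the interaction difference as
\[
\frac1{N-1}\sum_{j\ne i}\big[G(X^{i,N},X^{j,N})-G(X^{i,N},\bar X^j)\big]
+\frac1{N-1}\sum_{j\ne i}\big[G(X^{i,N},\bar X^j)-G(\bar X^i,\bar X^j)\big]
+\Big[\frac1{N-1}\sum_{j\ne i}G(\bar X^i,\bar X^j)-G\circledast\mu_t(\bar X^i)\Big].
\]
The terms combine as follows.
\begin{itemize}
\item The second bracket, together with $b(X^{i,N})-b(\bar X^i)$, is bounded by $\beta_0(r^i_t)$ via \eqref{dissipativity}.
\item The first bracket is bounded by $\|\nabla_yG\|_\infty\frac1{N-1}\sum_{j\ne i}r^j_t$.
\item The third is a fluctuation term, which we denote $\Xi^i_t$.
\end{itemize}

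Apply It\^o's formula to $e^{\widetilde K_\varepsilon t}\sum_i h_\varepsilon(r^i_t)$ and repeat the computation of $\widetilde D^\varepsilon_t$ in Proposition~\ref{multi}. The choice of $\widetilde K_\varepsilon$ absorbs the $\|\nabla_yG\|_\infty$ term, and the cutoff error is $O(\delta N)$. The only new remainder is $(\|h'\|_\infty+\varepsilon)\sum_i|\Xi^i_t|$; note that $\|\nabla_y G\|_\infty$ still has to enter the final constant, so this needs care.

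The key step is to bound $\mathbb E|\Xi^i_t|$. Conditionally on $\bar X^i$, the summands $G(\bar X^i,\bar X^j)-G\circledast\mu_t(\bar X^i)$ for $j\ne i$ are i.i.d.\ with mean zero. Hence $\mathbb E|\Xi^i_t|\le (N-1)^{-1/2}\big(\mathbb E|G(\bar X^i,\bar X^j)-G\circledast\mu_t(\bar X^i)|^2\big)^{1/2}$. By the $y$-Lipschitz bound, with $\tilde X$ an independent copy,
\[
|G(x,y)-\textstyle\int G(x,z)\mu_t(dz)|\le\|\nabla_yG\|_\infty\int|y-z|\mu_t(dz),
\]
so the variance is at most $\|\nabla_yG\|_\infty^2\,\mathbb E|\bar X_t-\tilde X_t|^2\le 2\|\nabla_yG\|_\infty^2\mathbb E|\bar X_t|^2$. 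Dropping the centring constant, I expect the bound $\mathbb E|\Xi^i_t|\le \|\nabla_yG\|_\infty\sqrt{\sup_s\mathbb E|\bar X_s|^2}\,(N-1)^{-1/2}$, possibly up to a harmless factor. To match the stated constant exactly, I would centre at the mean and use $\mathrm{Var}\le\mathbb E|\bar X|^2$. This moment is finite by Lemma~\ref{lem:MVmom}.

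Gronwall then gives $\mathbb E\sum_ih_\varepsilon(r^i_T)\le e^{-\widetilde K_\varepsilon T}\cdot0+N(\|h'\|_\infty+\varepsilon)\|\nabla_yG\|_\infty\sqrt{\sup\mathbb E|\bar X|^2}/(\widetilde K_\varepsilon\sqrt{N-1})+O(\delta)$. Convert back using $r\le S_\varepsilon h_\varepsilon(r)$ and $S_\varepsilon(\|h'\|_\infty+\varepsilon)\le A_\varepsilon$, since $\sup_r h_\varepsilon(r)/r\le\|h'\|_\infty+\varepsilon$. Finally, exchangeability of the coupling gives the per-particle bound with $\sum_i$ replaced by $N$ times one term, and we let $\delta\to0$.

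The main obstacle is the well-posedness of the coupled system, since it mixes the nonlinear MV drift, which involves a deterministic flow $\mu_t$, with the reflection cutoff. Because $\mu_t$ is a fixed flow, each $\bar X^i$ solves an ordinary SDE with time-dependent drift, so the same localization argument as in Proposition~\ref{multi} should apply.
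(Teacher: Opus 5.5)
Your route is the one the paper relies on. The paper's proof only invokes the componentwise reflection coupling of \cite[Theorem~2.11(b)]{LWZ21} between the particle system and i.i.d.\ McKean--Vlasov copies, adapted via Remark~\ref{rem:LWZ-general-drift}. The following parts of your argument go through:
\begin{itemize}
\item the three-way splitting of the interaction;
\item the absorption of $\|\nabla_yG\|_\infty\frac{1}{N-1}\sum_{j\neq i}r^j_t$ into $\widetilde K_\varepsilon$;
\item the $O(\delta)$ cutoff error;
\item the conditional-variance bound on $\Xi^i_t$.
\end{itemize}
Your centring remark is right: with $F=G(x,\cdot)$ and $Y\sim\mu_t$, $\mathbb E|F(Y)-\mathbb EF(Y)|^2\le\mathbb E|F(Y)-F(\mathbb EY)|^2\le\|\nabla_yG\|_\infty^2\,\mathbb E|Y-\mathbb EY|^2$. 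This removes the factor $\sqrt2$.

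The step that fails is the last one. From $\sup_{r>0}h_\varepsilon(r)/r\le\|h'\|_\infty+\varepsilon$ you get $A_\varepsilon=S_\varepsilon\sup_{r>0}h_\varepsilon(r)/r\le S_\varepsilon(\|h'\|_\infty+\varepsilon)$. That is the opposite of the inequality you use. Your Gronwall estimate therefore proves only
\[
W_1(\mu_{\bar X_T},\mu_{X_T^{i,N}})\le\frac{S_\varepsilon(\|h'\|_\infty+\varepsilon)}{\widetilde K_\varepsilon}\,\frac{\|\nabla_yG\|_\infty}{\sqrt{N-1}}\,\sup_{t\ge0}\sqrt{\mathbb E|\bar X_t|^2}.
\]
This constant can be strictly larger than $A_\varepsilon/\widetilde K_\varepsilon$. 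Since $h(r)/r=\frac1r\int_0^rh'(s)\,ds$ is a running average of $h'$, one has $\sup_{r>0}h(r)/r<\|h'\|_\infty$ whenever $\|h'\|_\infty$ exceeds both $h'(0)$ and $\limsup_{r\to\infty}h'(r)$. This happens, for instance, for a profile $\beta_0$ that is strongly negative near $0$ and at infinity but only mildly negative at intermediate distances, which (A3)--(A4) allow.

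It\^o's formula weights $|\Xi^i_t|$ by the pointwise slope $h'(r^i_t)+\varepsilon$. So nothing in this argument lets you replace $\|h'\|_\infty+\varepsilon$ by the averaged slope $\sup_r h_\varepsilon(r)/r$. To reach the stated constant you need the extra fact $\sup_{r>0}h(r)/r=\|h'\|_\infty$. This holds, for example, if $h'$ attains its supremum at $r=0$, as when $h'$ is nonincreasing, but it is not implied by the standing hypotheses.

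Otherwise the correct conclusion of your proof is the display above, with $S_\varepsilon(\|h'\|_\infty+\varepsilon)$ in place of $A_\varepsilon$. This matches the structure of the discretization constant $S_\varepsilon\Gamma_3/\widetilde K_\varepsilon$ in Proposition~\ref{multi}, and it does not affect the $(N-1)^{-1/2}$ rate in Theorem~\ref{MVTHM}.
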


\begin{proof}[\bf Proof of Theorem~\ref{MVTHM}]
	By the triangle inequality, for any $1\leq i \leq N$
	\begin{align}\label{triangle}
		W_1(\pi^{MV},\mu_{\widetilde{X}_T^{i,N}})\leq& W_1(\pi^{MV},\mu_{\bar{X}_T})+W_1(\mu_{\bar{X}_T},\mu_{X_T^{i,N}})+W_1(\mu_{X_T^{i,N}},\mu_{\widetilde{X}_T^{i,N}}).
	\end{align}
	{It remains to control the last term in \eqref{triangle}. In the coupling constructed in Lemma \ref{multi}, choose the initial coupling so that $X_0^{j,N}=\widetilde X_0^{j,N}$ for every $j$. The construction is exchangeable in the particle index; hence, for each fixed $i$,
		\[
		\mathbb{E}h_\varepsilon(r_T^i)=\frac1N\mathbb{E}\sum_{j=1}^Nh_\varepsilon(r_T^j).
		\]
		Repeating the argument leading to \eqref{coll}, dividing the resulting bound by $N$, and using the identical initial data, gives}
	\begin{equation}\label{multi-marginal}
		W_1(\mu_{X_T^{i,N}},\mu_{\widetilde{X}_T^{i,N}})\leq \sup\limits_{r>0}\frac{r}{h(r)+\varepsilon r}\frac{\Gamma_3\theta^{\alpha}+\Gamma_4\theta^{\frac{\beta}{2}}+\Gamma_5\theta^{\frac{1}{2}}}{\widetilde{K}_\varepsilon}.
	\end{equation}
	By \eqref{triangle}, Lemma \ref{lem:MV-contraction}, Lemma \ref{lem:POC}, and \eqref{multi-marginal}, we obtain the desired result.
\end{proof}

\begin{proof}[\bf Proof of Corollary \ref{coro0}]
	\blue{For two probability measures $\mu$ and $\nu$ on $(\mathbb{R}^d)^N$, we have}
	\begin{align*}
		W_{1,d_{l^1}}(\mu,\nu)\ge\sum_{i=1}^NW_1(\mu^i,\nu^i),
	\end{align*}
	\blue{where $\mu^i$ and $\nu^i$ denote the $i$-th marginals of $\mu$ and $\nu$, respectively.}
	It follows by Theorem \ref{MVTHM} that for any $k\in\mathbb{N},$
	\begin{align*}
		W_1(\pi_\theta^{\rm TEM,N,1},\pi^{MV})\le &W_1(\mu_{\widetilde{X}^{1,N}_{k\theta}},\pi^{MV})+W_1(\mu_{\widetilde{X}^{1,N}_{k\theta}},\pi_\theta^{\rm TEM,N,1})\\
		\le& \mathcal E_{\rm relax}^{\rm MV}(k\theta)+\mathcal E_{\rm disc}^{\rm MV}(\theta)+\mathcal E_{\rm part}^{\rm MV}(N)+W_{1,d_{l^1}}(\mu_{\widetilde{X}^N_{k\theta}},\pi^{\rm TEM,N}_\theta)\\
		\le&\mathcal E_{\rm relax}^{\rm MV}(k\theta)+\mathcal E_{\rm disc}^{\rm MV}(\theta)+\mathcal E_{\rm part}^{\rm MV}(N)+CN\widetilde{\rho}^k\mu_0^{\otimes N}(V^N),
	\end{align*}
	\blue{where the last inequality follows from \eqref{eq:coro0}.}
	Letting $k\to\infty$ completes the proof.
\end{proof}

\begin{proof}[\bf Proof of  Corollary~\ref{coro1}]
Write $\nu_j:=\mathcal L(\widetilde X_{j\theta}^{1,N})$ and
$D:=W_1(\mu_0,\pi^{MV})$. Exchangeability gives
$\mathbb E[\mu_{N,k}]=k^{-1}\sum_{j=0}^{k-1}\nu_j$. By
Kantorovich--Rubinstein duality, $W_1$ is convex in its first argument,
so
\[
 W_1\bigl(\mathbb E[\mu_{N,k}],\pi^{MV}\bigr)
 \leq \frac1k\sum_{j=0}^{k-1}W_1(\nu_j,\pi^{MV}).
\]
For $j\geq1$, Theorem~\ref{MVTHM} bounds the summand by
$A_\varepsilon e^{-\widetilde K_\varepsilon j\theta}D
+\mathcal E_{\rm disc}^{\rm MV}(\theta)
+\mathcal E_{\rm part}^{\rm MV}(N)$.
The same bound holds at $j=0$ because $\nu_0=\mu_0$,
$A_\varepsilon\geq1$, and both error terms are nonnegative. Hence the
geometric-series identity yields
\begin{align*}
 W_1\bigl(\mathbb E[\mu_{N,k}],\pi^{MV}\bigr)
 &\leq \mathcal E_{\rm disc}^{\rm MV}(\theta)
       +\mathcal E_{\rm part}^{\rm MV}(N)
       +\frac{A_\varepsilon D}{k}
         \sum_{j=0}^{k-1}e^{-\widetilde K_\varepsilon j\theta}\\
 &=\mathcal E_{\rm disc}^{\rm MV}(\theta)
       +\mathcal E_{\rm part}^{\rm MV}(N)
       +A_\varepsilon D
         \frac{1-e^{-\widetilde K_\varepsilon T}}
              {k(1-e^{-\widetilde K_\varepsilon\theta})}.
\end{align*}
Since $x/(1-e^{-x})\leq1+x$ for $x>0$, $k=T/\theta$, and
$C_{\rm erg}=A_\varepsilon D/\widetilde K_\varepsilon$, \eqref{eq:coro1} follows. 
\end{proof}

\begin{proof}[\bf Proof of Corollary \ref{coro2}]
	Let $\bar{X}^1_T,\cdots,\bar{X}^N_T$ be independent copies of \eqref{mckeanvlasov} at time $T$. By the triangle inequality
	\begin{align*}
		W_1\left(\frac{1}{N}\sum_{i=1}^N\delta_{\widetilde{X}_T^{i,N}},\pi^{MV}\right)\leq&	W_1\left(\frac{1}{N}\sum_{i=1}^N\delta_{\widetilde{X}_T^{i,N}},\frac{1}{N}\sum_{i=1}^N\delta_{X_T^{i,N}}\right)\\
		&+ W_1\left(\frac{1}{N}\sum_{i=1}^N\delta_{X_T^{i,N}},\frac{1}{N}\sum_{i=1}^N\delta_{\bar{X}_T^{i}}\right)\\
		&+W_1\left(\frac{1}{N}\sum_{i=1}^N\delta_{\bar{X}_T^{i}},\mu_{\bar{X}_T}\right)+W_1\left(\mu_{\bar{X}_T},\pi^{MV}\right)\\
		:=&I_1+I_2+I_3+I_4.
	\end{align*}
	For $I_1$, \eqref{eq:finite-cutoff-disc} yields
	\begin{align}\label{I1}
		\mathbb{E}I_1\leq\mathbb{E}[\frac{1}{N}\sum_{i=1}^{N}|\widetilde{X}_T^{i,N}-X_T^{i,N}|]\le\mathcal E_{\rm disc}^{\rm MV}(\theta)+\delta D_\varepsilon,
	\end{align}

	For $I_2$, by \cite[Proposition 4.3]{LWZ21}
	\begin{align}\label{I2}	
		\mathbb{E}I_2\leq\frac{1}{\sqrt{N-1}}\frac{A_\varepsilon}{\widetilde{K}_\varepsilon}\|\nabla_yG\|_\infty\sup\limits_{t\ge0}\sqrt{\mathbb{E}|\bar{X}_t|^2}.
	\end{align}	
	Applying \cite[Theorem 1]{FG} with $p=1$ and $q=4$ for $d\in\{1,2\}$, and with $p=1$ and $q=2$ for $d\ge3$, we obtain
	\begin{align}\label{I3}
		\mathbb{E}I_3\leq C_d
		\begin{cases}
			\sup\limits_{t\geq0}[\mathbb{E}|\bar{X}_t|^4]^\frac{1}{4}[N^{-\frac{1}{2}}+N^{-\frac{3}{4}}], \,  d=1,\\
			\sup\limits_{t\geq0}[\mathbb{E}|\bar{X}_t|^4]^\frac{1}{4}[N^{-\frac{1}{2}}\ln(1+N)+N^{-\frac{3}{4}}], \, d=2,\\
			\sup\limits_{t\geq0}[\mathbb{E}|\bar{X}_t|^2]^\frac{1}{2}[N^{-\frac{1}{d}}+N^{-\frac{1}{2}}], \,  d\geq3.
		\end{cases}
	\end{align}	
	By \eqref{firstterm}, we have
	\begin{align}\label{I4}
		I_4\leq A_\varepsilon e^{-\widetilde{K}_\varepsilon T}W_1(\pi^{MV},\mu_0).
	\end{align}
	Combining \eqref{I1}-\eqref{I4}, as $\delta\to0,$ we obtain \eqref{W1}.
\end{proof}

\begin{proof}[\bf Proof of Corollary \ref{coro:concen}]
Let
\[
a_0:=a+4\left(\frac{C_{\rm poc}}{\sqrt{N-1}}+\frac{C_{\rm erg}}{T}\right)>0.
\]
We first assume that $f\in C^2(\mathbb R^d)$; the passage to a general $1$-Lipschitz function is given at the end of the proof.
We choose the version of $\widetilde{X}^{i,N}$ and $X^{i,N}$ given by the coupling \eqref{copling-par}. It follows that
    \begin{align*}
        &\mathbb{P}\big(\frac{1}{kN}\sum_{i=1}^N\sum_{j=0}^{k-1}f(\widetilde{X}_{j\theta}^{i,N})-\pi^{MV}(f)>a_0\big)\\
        \le&\mathbb{P}\big(\frac{1}{kN}\sum_{i=1}^N\sum_{j=0}^{k-1}f(\widetilde{X}_{j\theta}^{i,N})-\frac{1}{TN}\sum_{i=1}^N\int_0^Tf(\widetilde{X}_{t}^{i,N})dt>a_0/4\big)\\
        &+\mathbb{P}\big(\frac{1}{NT}\sum_{i=1}^N\int_0^Tf(\widetilde{X}_{t}^{i,N})dt-\frac{1}{TN}\int_0^T\sum_{i=1}^Nf(X_{t}^{i,N})dt>a_0/4\big)\\
        &+\mathbb{P}\big(\frac{1}{TN}\int_0^T\sum_{i=1}^Nf(X_{t}^{i,N})dt-\frac{1}{TN}\int_0^T\sum_{i=1}^N\mathbb{E}[f(X_{t}^{i,N})]dt>a_0/4\big)\\
        &+\mathbb{P}\big(\frac{1}{TN}\int_0^T\sum_{i=1}^N\mathbb{E}[f(X_{t}^{i,N})]dt-\pi^{MV}(f)>a_0/4\big)\\
        :=&I_1+I_2+I_3+I_4.
    \end{align*}
    For the first term, from Lemma \ref{primu2}, we have
    \begin{align*}
        I_1\le&\frac{4}{a_0}\mathbb{E}|\frac{1}{TN}\sum_{i=1}^{N}\sum_{j=0}^{k-1}\int_{j\theta}^{(j+1)\theta}\big[f(\widetilde{X}_{j\theta}^{i,N})-f(\widetilde{X}_{t}^{i,N})\big]dt|\\
        \le&\frac{4}{a_0}\frac{1}{TN}\sum_{i=1}^{N}\sum_{j=0}^{k-1}\int_{j\theta}^{(j+1)\theta}\mathbb{E}|\widetilde{X}_{j\theta}^{i,N}-\widetilde{X}_{t}^{i,N}|dt
        \le\frac{4}{a_0}\sqrt{\theta}C_{\rm mom}.
    \end{align*}
    For the second term, from \eqref{eq:finite-cutoff-path},
    \begin{align*}
        I_2\le\frac{4}{a_0}\frac{1}{NT}\int_0^T\sum_{i=1}^N\mathbb{E}|\widetilde{X}_t^{i,N}-X_t^{i,N}|dt\le\frac{4}{a_0}(\mathcal E_{\rm disc}^{\rm MV}(\theta)+\delta D_\varepsilon).
    \end{align*}
    
    For the third term, put $g_N(x)=N^{-1}\sum_{i=1}^N f(x^i)$,
    $c_0=1-\|\nabla_yG\|_\infty\|h'\|_\infty>0$, and
    $c_{\rm Lip}=h'(0)/c_0$. The integrated componentwise coupling
    estimate of \cite[Theorem~2.1]{LWZ21}, with the non-gradient
    adaptation described in Remark~\ref{rem:LWZ-general-drift}, gives
    \[
    \left|\int_0^s\big(P_r^N g_N(x)-P_r^N g_N(y)\big)\,dr\right|
    \leq\frac{1}{Nc_0}\sum_{i=1}^N h(|x^i-y^i|),\qquad s\geq0.
    \]
    Indeed, the coupling calculation uses \eqref{h}; the interaction
    contributes at most $\|\nabla_yG\|_\infty\|h'\|_\infty$ times the
    total coordinate distance. Let
\[
    u(t,x)=T^{-1}\int_0^{T-t}P_s^N g_N(x)\,ds
\]
and
    $F_T=T^{-1}\int_0^T g_N(X_s^N)\,ds$. Subdividing a displacement in
    one coordinate and using $nh(r/n)\to h'(0)r$ shows that
    $\operatorname{Lip}_i u(t,\cdot)\leq c_{\rm Lip}/(NT)$.

    For smooth coefficients, the backward equation and It\^o's formula
    yield $F_T-u(0,X_0^N)=M_T$, where
    \[
    M_T=\sqrt2\sum_{i=1}^N\int_0^T\nabla_i u(s,X_s^N)\cdot dB_s^i,
    \qquad
    \langle M\rangle_T\leq\frac{2c_{\rm Lip}^2}{NT}.
    \]
    Thus the factor $2$ from the noise $\sqrt2\,dB_t$ must be retained.
    Tensorizing \eqref{eq:concercg} for the independent initial values
    bounds the centered log-moment generating function of
    $u(0,X_0^N)$ by
    $\lambda^2c_G(\mu_0)c_{\rm Lip}^2/(2NT^2)$. The conditional
    exponential-martingale bound therefore gives
    \[
    \mathbb E\exp\{\lambda(F_T-\mathbb EF_T)\}
    \leq\exp\left\{\frac{\lambda^2c_{\rm Lip}^2}{2NT}
    \left(2+\frac{c_G(\mu_0)}T\right)\right\},\qquad\lambda>0.
    \]
    For the coefficients under (A1)--(A5), this calculation follows by
    smooth approximation and localization: common spatial shifts
    preserve \eqref{dissipativity}, while mollification does not enlarge
    $\|\nabla_yG\|_\infty$; the uniform bound passes to the limit.
    Chernoff's inequality at level $a_0/4$ now yields
    \[
    I_3\leq\exp\left(-\frac{a_0^2
    (1-\|\nabla_yG\|_\infty\|h'\|_\infty)^2}
    {32(h'(0))^2(2+c_G(\mu_0)/T)}NT\right).
    \]
    
For the fourth term, from Lemmas~\ref{lem:MV-contraction} and \ref{lem:POC}, we  have
	\begin{align}\label{eq:bias}
		&|\mathbb{E}\frac{1}{T}\int_0^Tf(X_t^{1,N})dt-\pi^{MV}(f)|\notag\\
		=&\left|\frac{1}{T}\int_0^T\left[\mathbb{E}f(X_t^{1,N})-\mu_{\bar{X}_t}(f)+\mu_{\bar{X}_t}(f)-\pi^{MV}(f)\right]dt\right|\notag\\
		\le&\frac{\|f\|_{\rm Lip}}{T}\int_0^T\left[W_1(\mu_{{X}_t^{1,N}},\mu_{\bar{X}_t})+W_1(\mu_{\bar{X}_t},\pi^{MV})\right]dt\notag\\
        \le&\|f\|_{\rm Lip}\big[\frac{1}{\sqrt{N-1}}\frac{A_\varepsilon}{\widetilde{K}_\varepsilon}\|\nabla_{y}G\|_\infty \sup\limits_{t\ge0}\sqrt{\mathbb{E}|\bar{X}_t|^2}+\frac{1}{T}\int_0^TA_\varepsilon e^{-\widetilde{K}_\varepsilon t}W_1(\pi^{MV},\mu_0)dt\big]\notag\\
        \le&\|f\|_{\rm Lip}\big[\frac{1}{\sqrt{N-1}}\frac{A_\varepsilon}{\widetilde{K}_\varepsilon}\|\nabla_{y}G\|_\infty \sup\limits_{t\ge0}\sqrt{\mathbb{E}|\bar{X}_t|^2}+\frac{A_\varepsilon}{\widetilde{K}_\varepsilon T} W_1(\pi^{MV},\mu_0)\big].
	\end{align}
  It follows that
    \begin{align*}
        &|\frac{1}{TN}\int_0^T\sum_{i=1}^N\mathbb{E}[f(X_{t}^{i,N})]dt-\pi^{MV}(f)|\\
		&=|\frac{1}{T}\int_0^T\mathbb{E}[f(X_{t}^{1,N})]dt-\pi^{MV}(f)|\\
        &\le\frac{C_{\rm poc}}{\sqrt{N-1}}+\frac{C_{\rm erg}}{T}.
    \end{align*}
Therefore,
\begin{align*}
    &\mathbb{P}\big(\frac{1}{TN}\int_0^T\sum_{i=1}^N\mathbb{E}[f(X_{t}^{i,N})]dt-\pi^{MV}(f)>a_0/4\big)\\
    =&\mathbb{P}\left(\begin{gathered}
    \frac{1}{TN}\int_0^T\sum_{i=1}^N\mathbb{E}[f(X_{t}^{i,N})]dt-\pi^{MV}(f)\\
    >a/4+\frac{C_{\rm poc}}{\sqrt{N-1}}+\frac{C_{\rm erg}}{T}
    \end{gathered}\right)=0.
\end{align*}

Hence, as $\delta\to0,$ it follows that
\begin{align*}
    &\mathbb{P}\left(\frac{1}{kN}\sum_{i=1}^N\sum_{j=0}^{k-1}f(\widetilde{X}_{j\theta}^{i,N})-\pi^{MV}(f)>a_0\right)\\
    &\quad\le\frac{4}{a_0}\left(\mathcal E_{\rm disc}^{\rm MV}(\theta)+C_{\rm mom}\sqrt{\theta}\right)
    \\&\quad+\exp\left(-\frac{a_0^2(1-\|\nabla_yG\|_\infty\|h'\|_\infty)^2}{32(h'(0))^2({2}+c_G(\mu_0)/T)}NT\right).
\end{align*}
To remove the temporary smoothness assumption, let $\rho$ be a standard nonnegative $C_c^\infty(\mathbb R^d)$ mollifier and set $\rho_n(x)=n^d\rho(nx)$ and $f_n=f*\rho_n$. Then $f_n\in C^\infty(\mathbb R^d)$, $\|f_n\|_{\rm Lip}\leq1$, and
\[
\|f_n-f\|_\infty\leq \frac{1}{n}\int_{\mathbb R^d}|z|\rho(z)\,dz\longrightarrow0.
\]
Consequently, the centered time averages corresponding to $f_n$ converge almost surely to that corresponding to $f$, while $\pi^{MV}(f_n)\to\pi^{MV}(f)$. The bound above is uniform in $n$; hence, by passing to the limit (equivalently, by applying the estimate with a deviation level decreased by an arbitrary positive number and then letting that number tend to zero), the same inequality holds for every $1$-Lipschitz $f$.

This completes the proof.
\end{proof}

\section{Appendix}
\subsection{\texorpdfstring{\blue{Moment estimates for the discretization of the Langevin diffusion}}{Moment estimates for the discretization of the Langevin diffusion}}
\begin{lem}\label{lem:gaussian-moment-explicit}
Let $r\ge2$, let $Z\sim N(0,I_d)$, and let $Y$ be an $\mathbb R^d$-valued
random vector independent of $Z$ with $\mathbb E|Y|^r<\infty$. Then, for
$s\ge0$,
\begin{equation}\label{eq:gaussian-Lr-square}
\bigl(\mathbb E|Y+\sqrt{2s}Z|^r\bigr)^{2/r}
\le \bigl(\mathbb E|Y|^r\bigr)^{2/r}+2(d+r-2)s.
\end{equation}
\end{lem}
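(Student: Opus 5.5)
The idea is to condition on $Y$ and apply a one-step Gaussian bound to the function $\phi(t)=\mathbb E|Y+\sqrt{2t}Z|^r$, $t\in[0,s]$. The aim is the differential inequality
\[
\frac{d}{dt}\phi(t)^{2/r}\le 2(d+r-2),
\]
which integrates from $0$ to $s$ to give \eqref{eq:gaussian-Lr-square}.

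First, I would differentiate using the heat equation. Set $g(x)=|x|^r$, which is $C^2$ because $r\ge2$. The law of $W_t:=Y+\sqrt{2t}Z$ is that of $Y$ convolved with $N(0,2tI_d)$, so $\phi'(t)=\mathbb E\,\Delta g(W_t)$. One can see this either from the heat semigroup or from It\^o's formula applied to $Y+\sqrt2 B_t$ with $B$ a Brownian motion independent of $Y$; in the latter case the martingale term has zero mean once moments of order $r$ are finite. A direct computation gives
\[
\Delta|x|^r=r(d+r-2)|x|^{r-2}.
\]
Hence $\phi'(t)=r(d+r-2)\,\mathbb E|W_t|^{r-2}$.

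Next, I would bound the lower moment by H\"older's inequality with exponents $r/(r-2)$ and $r/2$:
\[
\mathbb E|W_t|^{r-2}\le \phi(t)^{(r-2)/r}.
\]
This gives
\[
\frac{d}{dt}\phi^{2/r}=\frac{2}{r}\phi^{2/r-1}\phi'\le 2(d+r-2)\,\phi^{2/r-1}\phi^{1-2/r}=2(d+r-2).
\]
At points where $\phi=0$ the same bound holds in the integrated sense, since $\phi$ is nondecreasing. When $r=2$ the H\"older step is trivial, and the argument reduces to the exact identity $\phi'(t)=2d$.

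The main obstacle is justifying the differentiation, in particular the finiteness of the moments and the vanishing of the martingale expectation. I would handle this with a localization argument: stop the process at the exit time $\tau_n$ from the ball of radius $n$, which gives
\[
\mathbb E|W_{t\wedge\tau_n}|^r=\mathbb E|Y|^r+r(d+r-2)\,\mathbb E\int_0^{t\wedge\tau_n}|W_u|^{r-2}\,du.
\]
I would then let $n\to\infty$ using Fatou's lemma on the left and monotone convergence on the right; only the inequality direction is needed. The integral inequality $\phi(t)\le\phi(0)+r(d+r-2)\int_0^t\phi(u)^{1-2/r}\,du$ then yields the claim by a comparison argument for the ODE $\psi'=r(d+r-2)\psi^{1-2/r}$, whose solution is
\[
\psi(t)^{2/r}=\phi(0)^{2/r}+2(d+r-2)t.
\]
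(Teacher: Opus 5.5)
Your proof is correct. It shares its core with the paper's proof: It\^o's formula for $|x|^r$ with $\Delta|x|^r=r(d+r-2)|x|^{r-2}$, H\"older to control the $(r-2)$-moment by the $r$-moment, and integration of the resulting bound for the $2/r$-power. Where you differ is in how the randomness of $Y$ is handled.

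\textbf{The paper's route.} It first fixes a deterministic $y$. The stochastic integral is then a genuine martingale, since all Gaussian moments are finite, so the differentiation is immediate. This gives $\mathbb E|y+\sqrt{2s}Z|^r\le(|y|^2+2(d+r-2)s)^{r/2}$. Only afterwards does the paper condition on $Y$ and use Minkowski's inequality in $L^{r/2}$, namely $\bigl\|\,|Y|^2+c\,\bigr\|_{L^{r/2}}\le\bigl\|\,|Y|^2\,\bigr\|_{L^{r/2}}+c$, to return to the random initial value.

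\textbf{Your route.} You run the argument directly for $\phi(t)=\mathbb E|Y+\sqrt{2t}Z|^r$ and apply H\"older under the joint law of $(Y,Z)$. This removes the conditioning and Minkowski step entirely. So your opening phrase ``condition on $Y$'' does not actually describe what you do.

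\textbf{What your route costs.} With $Y$ only in $L^r$, the zero mean of the martingale term does not follow from square integrability, which would require moments of order $2r-2$. It holds either by conditioning on $Y$, which is essentially the paper's reduction, or by your localization. The localization with Fatou and monotone convergence is therefore the right tool. It yields only $\phi(t)\le\phi(0)+r(d+r-2)\int_0^t\phi(u)^{1-2/r}\,du$, but that suffices.

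\textbf{The final comparison step.} Since $\psi\mapsto\psi^{1-2/r}$ is not Lipschitz at $0$, do the comparison Bihari-style rather than by ODE uniqueness. Let $R(t)$ be the right-hand side above; then $R'\le r(d+r-2)R^{1-2/r}$. Show $\frac{d}{dt}(R+\epsilon)^{2/r}\le 2(d+r-2)$ and let $\epsilon\downarrow0$. The formula you wrote is the maximal solution, so this gives exactly \eqref{eq:gaussian-Lr-square}.

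Both routes give the same constant. The paper's is cleaner at the stochastic-calculus level; yours is cleaner in integrating out $Y$.
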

\begin{proof}
For a deterministic $y\in\mathbb R^d$, set
$F_y(s):=\mathbb E|y+\sqrt2B_s|^r$. It\^o's formula gives, for $s>0$,
\begin{equation*}
\begin{aligned}
    |y+\sqrt{2}B_s|^r&=|y|^r+r(d+r-2)\int_0^s|y+\sqrt{2}B_u|^{r-2}du\\
		&\quad+\sqrt{2}\int_0^sr|y+\sqrt{2}B_u|^{r-2}\langle y+\sqrt{2}B_u,dB_u\rangle.
\end{aligned}
\end{equation*}
It follows that 
\[
F_y'(s)=r(d+r-2)\mathbb E|y+\sqrt2B_s|^{r-2}
\le r(d+r-2)F_y(s)^{1-2/r}.
\]
Consequently,
$\frac{d}{ds}F_y(s)^{2/r}\le2(d+r-2)$, and continuity at $s=0$ yields
\[
\mathbb E|y+\sqrt{2s}Z|^r
\le\bigl(|y|^2+2(d+r-2)s\bigr)^{r/2}.
\]
Conditioning on $Y$
and applying Minkowski's inequality in $L^{r/2}$, we obtain
\begin{align*}
\bigl(\mathbb E|Y+\sqrt{2s}Z|^r\bigr)^{2/r}\\
		&=\bigl(\mathbb{E}[\mathbb E\bigl[|Y+\sqrt{2s}Z|^r|Y]\bigr]\bigr)^{2/r}\\
&\le\bigl(\mathbb{E}(|Y|^2+2(d+r-2)s)^{r/2}\bigr)^{2/r}\\
&\le\bigl(\mathbb E|Y|^r\bigr)^{2/r}+2(d+r-2)s,
\end{align*}
which proves \eqref{eq:gaussian-Lr-square}.
\end{proof}
\begin{proof}[Proof of Lemma \ref{pri}]
Fix $r\in[2,q]$ and $0<\theta<\eta$. The linear-growth estimate
\begin{equation}\label{drift-cont}
|b_\alpha(x)|^2\le8K^2\theta^{-2\alpha}(1+|x|^2)
\end{equation}
and the finite initial $r$-th moment imply that all the moments used below
are finite at every fixed time. The definition of $\eta$ gives
\begin{equation}\label{r1}
r_1:=1-\kappa_\alpha\theta\in(0,1).
\end{equation}
For $0\le s\le\theta$, Assumption~\textbf{(A2)} and
\eqref{drift-cont} imply
\begin{align}\label{eq:Lan-deterministic-substep}
|x+s b_\alpha(x)|^2
&\le\bigl(1-2c_1s+8K^2s^2\theta^{-2\alpha}\bigr)|x|^2
       +2c_2s+8K^2s^2\theta^{-2\alpha}\notag\\
&\le(1-\kappa_\alpha s)|x|^2+D_{\rm L}s.
\end{align}
Indeed, $s^2\theta^{-2\alpha}\le s\theta^{1-2\alpha}$, and
$\theta^{1-2\alpha}\le1$. If $\alpha<1/2$, the step-size restriction
also gives $8K^2\theta^{1-2\alpha}\le c_1$; if $\alpha=1/2$, the
coefficient is bounded using $\kappa_\alpha=2(c_1-4K^2)$.
In particular, $1-\kappa_\alpha s\ge0$.

Set
\[
U_r(t):=\bigl(\mathbb E|\widetilde X_t|^r\bigr)^{2/r},
\qquad
H_{r,d}^{\rm L}:=\frac{D_{\rm L}+2(d+r-2)}{\kappa_\alpha}.
\]
The Brownian increment over $[k\theta,k\theta+s]$ is independent of
$\mathscr F_{k\theta}$. Thus \eqref{interpolation-single},
Lemma~\ref{lem:gaussian-moment-explicit},
\eqref{eq:Lan-deterministic-substep}, and Minkowski's inequality give
\begin{align}\label{eq:Lan-root-moment-recursion}
U_r(k\theta+s)
&\le\left(
\mathbb E
\bigl|\widetilde X_{k\theta}+s b_\alpha(\widetilde X_{k\theta})\bigr|^{r}
\right)^{2/r}+2(d+r-2)s\notag\\
&\le(1-\kappa_\alpha s)U_r(k\theta)
     +\bigl(D_{\rm L}+2(d+r-2)\bigr)s.
\end{align}
Taking $s=\theta$ and iterating this scalar inequality in the time index
$k$ yields
\[
U_r(k\theta)\le r_1^k m_r^{2/r}
                  +(1-r_1^k)H_{r,d}^{\rm L}
\le\max\{m_r^{2/r},H_{r,d}^{\rm L}\}.
\]
Applying \eqref{eq:Lan-root-moment-recursion} once more for
$0\le s\le\theta$ gives the same bound throughout each interpolation
interval, because its right-hand side is
$(1-\kappa_\alpha s)U_r(k\theta)
 +\kappa_\alpha s H_{r,d}^{\rm L}$.
The bound is uniform over $0<\theta<\eta$, proving
\eqref{eq:moment-Lan-explicit}. In particular,
\begin{equation}\label{pri1}
C_2\le\max\left\{m_2,
\frac{2c_2+8K^2+2d}{\kappa_\alpha}\right\}.
\end{equation}
The estimates for $0<r<2$ follow from Lyapunov's inequality.
\end{proof}

\begin{proof}[Proof of Lemma \ref{pri2}]
	Since
	\begin{equation*}
		\widetilde{X}_t-\widetilde{X}_{k\theta}=(t-k\theta)b_{\alpha}(\widetilde{X}_{k\theta})+\sqrt{2}({B}_{t}-{B}_{k\theta}),
	\end{equation*}
	by \eqref{prop1} and \eqref{drift-cont}, we obtain
	\begin{align*}
		&\mathbb{E}[|(t-k\theta)b_{\alpha}(\widetilde{X}_{k\theta})+\sqrt{2}({B}_{t}-{B}_{k\theta})|^2]\\
		=&(t-k\theta)^2\mathbb{E}[|b_{\alpha}(\widetilde{X}_{k\theta})|^2]+2(t-k\theta)d\\
		\leq&\theta^{2-2\alpha}8K^2\mathbb{E}(|\widetilde{X}_{k\theta}|^2+1)+2\theta d.
	\end{align*}
	By \eqref{pri1} and $0<\theta\le 1$, we have
	\begin{align*}
		\mathbb{E}[|\widetilde{X}_t-\widetilde{X}_{k\theta}|^2]=\mathbb{E}[|(t-k\theta)b_{\alpha}(\widetilde{X}_{k\theta})+\sqrt{2}(B_{t}-B_{k\theta})|^2]\leq\theta(8K^2(1+C_2)+2d).
	\end{align*}
\end{proof}
\subsection{\texorpdfstring{\blue{Moment estimates for the discretization of the interacting particle system}}{Moment estimates for the discretization of the interacting particle system}}
	\begin{proof}[Proof of Lemma \ref{primul}]
Fix $N\ge2$, $0<\theta<\widetilde\eta$, and $r\in[2,q]$.
For each fixed time, finiteness of the $r$-th moments follows from
\eqref{prop1}, \eqref{Glip}, and the finite initial $r$-th moment.
Exchangeability gives, for $j\ne1$ and all relevant $p>0$,
\begin{equation}\label{exchan}
\begin{aligned}
\mathbb E|\widetilde X_t^{j,N}|^p
 &=\mathbb E|\widetilde X_t^{1,N}|^p,\\
\mathbb E|G(\widetilde X_t^{1,N},\widetilde X_t^{j,N})|^2
 &=\mathbb E|G(\widetilde X_t^{1,N},\widetilde X_t^{2,N})|^2.
\end{aligned}
\end{equation}
Write $L_x:=\|\nabla_xG\|_\infty$. For
$x=(x^1,\ldots,x^N)$ and $0\le s\le\theta$, define
\[
\Phi_{\theta,s}^i(x):=
 x^i+s b_\alpha(x^i)+\frac{s}{N-1}\sum_{j\ne i}G(x^i,x^j).
\]
The growth estimates and Young's inequality give
\begin{equation}\label{eq:particle-deterministic-substep}
|\Phi_{\theta,s}^i(x)|^2
\le a_s|x^i|^2+\frac{b_s}{N-1}\sum_{j\ne i}|x^j|^2+c_s,
\end{equation}
where
\begin{align*}
a_s&:=1-2c_1s+2L_xs+\sqrt2K_Gs
           +16K^2s^2\theta^{-2\alpha}+6K_G^2s^2,\\
b_s&:=\sqrt2K_Gs+6K_G^2s^2,\\
c_s&:=(2c_2+\sqrt2K_G)s
           +16K^2s^2\theta^{-2\alpha}+6K_G^2s^2.
\end{align*}
To see this, writing
$g_i(x):=(N-1)^{-1}\sum_{j\ne i}G(x^i,x^j)$, one has
\[
|b_\alpha(x^i)+g_i(x)|^2
\le16K^2\theta^{-2\alpha}(1+|x^i|^2)
+6K_G^2\left(1+|x^i|^2+\frac1{N-1}\sum_{j\ne i}|x^j|^2\right)
\]
and
\[
2\langle x^i,g_i(x)\rangle
\le(2L_x+\sqrt2K_G)|x^i|^2
+\sqrt2K_G\left(1+\frac1{N-1}\sum_{j\ne i}|x^j|^2\right).
\]
The latter inequality follows by splitting
$G(x^i,x^j)=G(x^i,x^j)-G(0,x^j)+G(0,x^j)$ and applying
Young's inequality with parameter $\sqrt2K_G$.

The definition of $\widetilde\eta$ ensures that $a_s,b_s\ge0$ and
\begin{equation}\label{eq:particle-substep-coefficients}
a_s+b_s\le1-\widetilde\kappa_\alpha s,
\qquad c_s\le D_{\rm P}s,
\qquad 0\le s\le\theta.
\end{equation}
Indeed, $a_s\ge1-(2c_1-2L_x-\sqrt2K_G)s\ge0$, and
\[
a_s+b_s
\le1-2(c_1-L_x-\sqrt2K_G)s
 +\bigl(16K^2\theta^{1-2\alpha}+12K_G^2\theta\bigr)s.
\]
For $\alpha<1/2$, the term in parentheses is bounded above by
$(16K^2+12K_G^2)\theta^{1-2\alpha}
 \le c_1-L_x-\sqrt2K_G$.
For $\alpha=1/2$, use
$12K_G^2\theta\le c_1-L_x-\sqrt2K_G-8K^2$.
These are exactly restrictions already included in $\widetilde\eta$.
Also, $s^2\theta^{-2\alpha}\le s$ and $s^2\le s$, proving the
bound for $c_s$. In particular,
\begin{equation}\label{wr1}
\widetilde r_1:=1-\widetilde\kappa_\alpha\theta\in(0,1).
\end{equation}

Set
\[
U_r^N(t):=\bigl(\mathbb E|\widetilde X_t^{1,N}|^r\bigr)^{2/r},
\qquad
H_{r,d}^{\rm P}:=
\frac{D_{\rm P}+2(d+r-2)}{\widetilde\kappa_\alpha}.
\]
Taking $L^{r/2}$ norms in \eqref{eq:particle-deterministic-substep},
Minkowski's inequality and exchangeability yield
\begin{align*}
(\mathbb{E}|\Phi_{\theta,s}^1(\widetilde X_{k\theta}^N)|^r)^{\frac2r}
&\le a_sU_r^N(k\theta)
 +\frac{b_s}{N-1}\sum_{j\ne1}
       \bigl(\mathbb E|\widetilde X_{k\theta}^{j,N}|^r\bigr)^{2/r}
 +c_s\\
&=(a_s+b_s)U_r^N(k\theta)+c_s\\
&\le(1-\widetilde\kappa_\alpha s)U_r^N(k\theta)+D_{\rm P}s.
\end{align*}
By \eqref{interpolation} and
Lemma~\ref{lem:gaussian-moment-explicit}, it follows that
\begin{equation}\label{eq:particle-root-moment-recursion}
U_r^N(k\theta+s)
\le(1-\widetilde\kappa_\alpha s)U_r^N(k\theta)
  +\bigl(D_{\rm P}+2(d+r-2)\bigr)s.
\end{equation}
Taking $s=\theta$ and iterating in $k$, we obtain
\[
U_r^N(k\theta)
\le\widetilde r_1^k m_r^{2/r}
 +(1-\widetilde r_1^k)H_{r,d}^{\rm P}
\le\max\{m_r^{2/r},H_{r,d}^{\rm P}\}.
\]
Using \eqref{eq:particle-root-moment-recursion} for
$0\le s\le\theta$ gives the same bound between grid points.
All constants are independent of $N$, $k$, and $\theta$ in the stated
range. This proves \eqref{eq:moment-particle-explicit}.
The bounds for $0<r<2$ follow from Lyapunov's inequality.
\end{proof}

\begin{proof}[Proof of Lemma \ref{primu2}]
	By the definition of the interpolation processes \eqref{interpolation}, \eqref{prop1}, \eqref{Glip}, \eqref{exchan} and Lemma \ref{primul}, for any $t\in[k\theta,(k+1)\theta],$
	\begin{align*}
		&\sum_{i=1}^N\mathbb{E}[|\widetilde{X}_t^{i,N}-\widetilde{X}_{k\theta}^{i,N}|^2]\\
		=&\sum_{i=1}^N\mathbb{E}[|(t-k\theta)b_\alpha(\widetilde{X}^{i,N}_{k\theta})+\frac{t-k\theta}{N-1}\sum\limits_{j\neq i}G(\widetilde{X}^{i,N}_{k\theta},\widetilde{X}_{k\theta}^{j,N})+\sqrt{2}(B_t^i-B^i_{k\theta})|^2]\\
		=&\sum_{i=1}^N\mathbb{E}[(t-k\theta)^2|b_\alpha(\widetilde{X}^{i,N}_{k\theta})+\frac{1}{N-1}\sum\limits_{j\neq i}G(\widetilde{X}^{i,N}_{k\theta},\widetilde{X}_{k\theta}^{j,N})|^2]+2(t-k\theta)Nd\\
		\leq&2\theta Nd+2\theta^2\sum_{i=1}^N\mathbb{E}[|b_\alpha(\widetilde{X}_{k\theta}^{i,N})|^2+|\frac{1}{N-1}\sum\limits_{j\neq i}G(\widetilde{X}_{k\theta}^{i,N},\widetilde{X}_{k\theta}^{j,N})|^2]\\
		\leq&2\theta Nd+\mathbb{E}\sum_{i=1}^N[16K^2 \theta^{2-2\alpha}(1+|\widetilde{X}_{k\theta}^{i,N}|^2)+\frac{6\theta^2K_G^2}{N-1}\sum\limits_{j\neq i}(1+|\widetilde{X}_{k\theta}^{i,N}|^2+|\widetilde{X}_{k\theta}^{j,N}|^2)]\\
		\leq&(2\theta Nd+16K^2 \theta^{2-2\alpha}N+6\theta^2K_G^2N)+(16K^2 \theta^{2-2\alpha}+12\theta^2K_G^2)\mathbb{E}[\sum_{i=1}^N|\widetilde{X}^{i,N}_{k\theta}|^2]\\
		\leq&(2\theta Nd+16K^2 \theta^{2-2\alpha}N+6\theta^2K_G^2N)+(16K^2 \theta^{2-2\alpha}+12\theta^2K_G^2)N\widetilde{C}_2\\
		\leq&N\theta(2d+16K^2+6K_G^2 +16K^2\widetilde{C}_2+12K_G^2\widetilde{C}_2).
	\end{align*}
	This completes the proof.
\end{proof}
\subsection{\texorpdfstring{\blue{Moment estimates for the McKean--Vlasov equation \eqref{mckeanvlasov}}}{Moment estimates for the McKean--Vlasov equation}}
\begin{lem}\label{lem:MVmom}
	Suppose that $G\in C^1(\mathbb R^d\times\mathbb R^d;\mathbb R^d)$ has bounded partial derivatives and that there exist constants $c_1,c_2>0$ such that
	\[
	\langle x,b(x)\rangle\le -c_1|x|^2+c_2.
	\]
	Set
	\[
	\qquad \lambda_G:=c_1-\|\nabla_xG\|_\infty-\|\nabla_yG\|_\infty.
	\]
	Let $\bar X_t$ be a solution to \eqref{mckeanvlasov} and assume $\lambda_G>0$. If $\mathbb E|\bar X_0|^2<\infty$, then
	\[
	\sup_{t\ge0}\mathbb E|\bar X_t|^2<\infty.
	\]
	If additionally $\mathbb E|\bar X_0|^4<\infty$, then
	\[
	\sup_{t\ge0}\mathbb E|\bar X_t|^4<\infty.
	\]
\end{lem}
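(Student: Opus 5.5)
The plan is to apply Itô's formula to $|\bar X_t|^2$ and close a linear differential inequality for $m(t):=\mathbb E|\bar X_t|^2$. Then repeat the same argument for $|\bar X_t|^4$, using the second-moment bound as an input.

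\emph{Second moment.} Write $\mu_t=\mathcal L(\bar X_t)$. Itô's formula gives
\[
d|\bar X_t|^2=\Bigl(2\langle \bar X_t,b(\bar X_t)\rangle+2\bigl\langle \bar X_t,G\circledast\mu_t(\bar X_t)\bigr\rangle+2d\Bigr)dt+2\sqrt2\,\langle \bar X_t,dB_t\rangle .
\]
The interaction term is split as
\[
G(x,y)=\bigl[G(x,y)-G(0,y)\bigr]+\bigl[G(0,y)-G(0,0)\bigr]+G(0,0),
\]
which yields
\[
\langle x,G(x,y)\rangle\le \|\nabla_xG\|_\infty|x|^2+\|\nabla_yG\|_\infty|x||y|+|G(0,0)||x|.
\]
Next, integrate in $y$ against $\mu_t$ and take expectations. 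With the Cauchy--Schwarz inequality $\mathbb E|\bar X_t|\,\mathbb E|\bar X_t|\le m(t)$, the cross term gives exactly $\|\nabla_yG\|_\infty m(t)$. The linear term is absorbed by Young's inequality, at the cost of an arbitrarily small multiple of $m(t)$.

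To make the stochastic integral a true martingale, I localize with stopping times $\tau_n=\inf\{t:|\bar X_t|\ge n\}$. The law $\mu_t$ is a fixed deterministic flow, so the interaction drift is an ordinary time-dependent coefficient. Fatou's lemma then gives the integrated inequality
\[
m(t)\le m(0)+\int_0^t\bigl(-2(\lambda_G-\epsilon)\,m(s)+C_\epsilon\bigr)\,ds .
\]
A priori finiteness of $m$ on compact time intervals is needed to run the comparison. It follows from the standard well-posedness (Remark~\ref{rem:LWZ-general-drift}) together with the same localization and Gronwall's lemma on $[0,T]$. Choosing $\epsilon=\lambda_G/2$, Gronwall's lemma gives
\[
m(t)\le e^{-\lambda_G t}m(0)+C/\lambda_G,
\]
which is uniform in $t$.

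\emph{Fourth moment.} Itô's formula for $|x|^4$ gives the drift
\[
4|x|^2\bigl\langle x,\,b(x)+G\circledast\mu_t(x)\bigr\rangle+(4d+8)\cdot 2\,|x|^2 ,
\]
where the second-order term equals $2\cdot\tfrac12\cdot\bigl(2d|x|^2+4|x|^2\bigr)\cdot 2$ up to constants. With the same splitting, the main term is bounded by
\[
-4(c_1-\|\nabla_xG\|_\infty)|x|^4+4\|\nabla_yG\|_\infty|x|^3\,\mathbb E|\bar X_t| .
\]
\textbf{The main obstacle} is the coupled cross term $\mathbb E\bigl[|\bar X_t|^3\bigr]\,\mathbb E|\bar X_t|$, and I would try Hölder's inequality on it first. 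Hölder gives $\mathbb E|X|^3\le(\mathbb E|X|^4)^{3/4}$ and $\mathbb E|X|\le(\mathbb E|X|^4)^{1/4}$, so the cross term is at most $\|\nabla_yG\|_\infty\,\mathbb E|\bar X_t|^4$. This keeps the same rate $\lambda_G$ as in the second-moment step. The remaining lower-order terms are of the form $|x|^3$ times constants and $|x|^2$. These are handled with Young's inequality and the uniform second-moment bound already obtained.

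After the same localization argument, this leads to
\[
\frac{d}{dt}\,\mathbb E|\bar X_t|^4\le-2\lambda_G\,\mathbb E|\bar X_t|^4+C ,
\]
and Gronwall's lemma gives the uniform-in-time fourth-moment bound.
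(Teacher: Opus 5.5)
Your proposal is correct and is essentially the paper's proof: the same splitting of $G$ gives $\frac{d}{dt}\mathbb E|\bar X_t|^2\le -2\lambda_G\mathbb E|\bar X_t|^2+2|G(0,0)|(\mathbb E|\bar X_t|^2)^{1/2}+2c_2+2d$, the same H\"older bound $\mathbb E|\bar X_t|^3\,\mathbb E|\bar X_t|\le\mathbb E|\bar X_t|^4$ handles the fourth-moment cross term, and Young plus Gronwall close both steps, with your localization and a priori finiteness remarks spelling out what the paper calls standard. The one slip is harmless: the It\^o correction for $|x|^4$ is $\Delta|x|^4=(4d+8)|x|^2$, not $2(4d+8)|x|^2$, and this changes only a constant in a term already controlled by the uniform second-moment bound.
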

\begin{proof}
	Write $g_0:=|G(0,0)|$. The bounded derivatives of $G$ give
	\[
	|G(x,y)|\le g_0+\|\nabla_xG\|_\infty|x|+\|\nabla_yG\|_\infty|y|.
	\]
	\blue{Set $M_r(t):=\mathbb E|\bar X_t|^r$. We apply It\^o's formula to $M_2(t)$ and $M_4(t)$; the standard stopping-time localization argument justifies both applications.} 
    For the second moment,
	\begin{align*}
	M_2'(t)
	&\le -2c_1M_2(t)+2c_2+2d+
	  2\mathbb E\big\langle\bar X_t,G\circledast\mu_{\bar X_t}(\bar X_t)\big\rangle\\
	&\le -2\lambda_GM_2(t)+2g_0M_2(t)^{1/2}+2c_2+2d\\
	&\le -\lambda_GM_2(t)+2c_2+2d+\frac{g_0^2}{\lambda_G}.
	\end{align*}
	Gronwall's inequality therefore yields $\sup_{t\ge0}M_2(t)<\infty$.

	For the fourth moment, It\^o's formula and H\"older's inequality give
	\begin{align*}
	M_4'(t)
	\le{}&-4c_1M_4(t)+(4c_2+4d+8)M_2(t)\\
	&+4\mathbb E\left[|\bar X_t|^2
	\big\langle\bar X_t,G\circledast\mu_{\bar X_t}(\bar X_t)\big\rangle\right]\\
	\le{}&-4\lambda_GM_4(t)+(4c_2+4d+8)M_2(t)
	+4g_0\mathbb E|\bar X_t|^3,
	\end{align*}
	where we used
	\[
	\mathbb E|\bar X_t|^3\,\mathbb E|\bar X_t|
	\le M_4(t)^{3/4}M_4(t)^{1/4}=M_4(t).
	\]
	For every $\delta>0$, Young's inequality yields
	\[
	g_0\mathbb E|\bar X_t|^3
	\le \delta M_4(t)+\frac{27g_0^4}{256\delta^3}.
	\]
	Taking $\delta=\lambda_G/2$, we obtain
	\[
	M_4'(t)\le-2\lambda_GM_4(t)
	+(4c_2+4d+8)M_2(t)+\frac{27g_0^4}{8\lambda_G^3}.
	\]
	The uniform second-moment bound and Gronwall's inequality now imply 
\[
\sup_{t\ge0}M_4(t)<\infty.
\]
\end{proof}

\vskip0.5cm
\noindent{\bf Acknowledgments }
The authors would like to thank Prof.~Liming Wu for his valuable comments and suggestions on this manuscript. The research of the fourth named author is supported by NNSFC Grant 12471143.

\bibliographystyle{amsplain}
\bibliography{euler-mv-references}

\end{document}